\renewcommand{\widehat}{\hat}
\newtheorem{theorem}{Theorem}[section]
\newtheorem{lemma}[theorem]{Lemma}
\newtheorem{question}[theorem]{Question}
\newtheorem{corollary}[theorem]{Corollary}
\theoremstyle{definition}
\newtheorem{definition}[theorem]{Definition}
\theoremstyle{remark}
\newtheorem{remark}[theorem]{Remark}
\numberwithin{equation}{theorem}
\newcommand{\ce}{computably enumerable }
\newcommand{\join}{\oplus}
\renewcommand{\phi}{\varphi}
\newcommand{\gen}[1] {\langle #1\rangle}
\newcommand{\Dhat}{\widehat{D}}
\newcommand{\Rhat}{\widehat{R}}
\newcommand{\Hhat}{\widehat{H}}\newcommand{\Ehat}{\widehat{E}}
\title[$\mathcal{D}  $-maximal sets]{$\mathcal{D}$-maximal sets}
\author[P.\ Cholak]{Peter~A.~Cholak}
\address{Department of Mathematics\\ University of Notre Dame\\
  Notre Dame, IN 46556-5683}
\email{Peter.Cholak.1@nd.edu}
\urladdr{http://www.nd.edu/~cholak}
\author[P.\ Gerdes]{Peter Gerdes}
\email{gerdes@invariant.org}
\author[K.\ Lange]{Karen Lange}
\address{Department of Mathematics\\ Wellesley College\\
  Wellesley, MA 02482}
  \email{karen.lange@wellesley.edu}
  \urladdr{http://palmer.wellesley.edu/~klange/}
\thanks{This research was partially done while Cholak participated in
  the Buenos Aires Semester in Computability, Complexity and
  Randomness, 2013.
  Lange was partially supported by NSF DMS-0802961 and NSF
  DMS-1100604. }
\date{\today}
\begin{document}

 \begin{abstract}
   Soare \cite{Soare:74} proved that the maximal sets form an orbit in
   $\mathcal{E}$.  We consider here {\em $\mathcal{D}$-maximal sets},
   generalizations of maximal sets introduced by Herrmann and Kummer
   \cite{MR1264963}.  Some orbits of $\mathcal{D}$-maximal sets are
   well understood, e.g., hemimaximal sets \cite{DSAutsOrbits}, but
   many are not.  The goal of this paper is to define new invariants
   on computably enumerable sets and to use them to give a complete
   nontrivial classification of the $\mathcal{D}$-maximal sets.
   Although these invariants help us to better understand the
   $\mathcal{D}$-maximal sets, we use them to show that several
   classes of $\mathcal{D}$-maximal sets break into infinitely many
   orbits.
 \end{abstract}
 \maketitle

 \section{Introduction}

 Let $\mathcal{E}$ denote the structure of computably enumerable
 (c.e.)  sets under set inclusion.  Understanding the
 lattice-theoretic properties of $\mathcal{E}$ and the interplay
 between computability and definability in $\mathcal{E}$ are
 longstanding areas of research in classical computability theory.  In
 particular, researchers have worked to understand the automorphism
 group of $\mathcal{E}$ and the {\em orbits} of $\mathcal{E}$. The
 {\em orbit} of a c.e.\ set $A$ is the collection of c.e.\ sets
 \mbox{$[A]=\{B\in\mathcal{E}\mid (\exists\ \Psi:
   \mathcal{E}\xrightarrow{\sim}\mathcal{E})\,
   \left(\Psi(A)=B\right)\}$.}
 One of the major questions in classical computability is the
 following.

\begin{question}\label{Q:Orbits}
  What are the (definable) orbits of $\mathcal{E}$, and what degrees
  are realized in these orbits? How can new orbits be constructed from
  old ones? \end{question}

In seminal work \cite{Soare:74}, Soare proved that the maximal sets
form an orbit using his Extension Theorem. Martin \cite{M66b} had
previously shown that the maximal sets are exactly those c.e.\ sets of
high degree, thus describing the definable property of being maximal
in degree-theoretic terms.  In addition, Harrington had shown that the
creative sets form an orbit (see \cite{Soare:87}, Chapter XV).  In
time, Soare's Extension Theorem was generalized and applied widely to
construct many more orbits of $\mathcal{E}$. For example, Downey and
Stob \cite{DSAutsOrbits} showed that the {\em hemimaximal sets}, i.e.,
splits of maximal sets, form an orbit and studied their degrees. In
particular, any maximal or hemimaximal set is automorphic to a
complete set. On the other hand, Harrington and Soare \cite{HS1991}
defined a first order nontrivial property $Q$ such that if $A$ is a
c.e.\ set and $Q(A)$ holds, then $A$ is not automorphic to a complete
set.  These results are the first partial answers to the following
question related to Question \ref{Q:Orbits}.

\begin{question} \label{Q:AuttoComp} Which orbits of $\mathcal{E}$
  contain complete sets? \end{question} \noindent

It turns out that until recently all known definable orbits of
$\mathcal{E}$, besides the orbit of creative sets, were orbits of {\em
  $\mathcal{D}$-hhsimple sets}, generalizations of hhsimple
(hyper-hypersimple) sets (see \cite{Onorbits}).  (We give extensive
background on all definitions and ideas mentioned here in
\S\ref{D-maxbackground}.)

The Slaman-Woodin Conjecture \cite{SlamanWoodinConjecture} asserts
that the set
\begin{equation*}
  \{\langle i, j\rangle\mid (\exists\ \Psi: \mathcal{E}\xrightarrow{\sim}\mathcal{E})\, [\Psi(W_i)=W_j]\}
\end{equation*} is $\Sigma_1^1$-complete.  The conjecture was based on  the belief that information  could be coded into the  orbits of hhsimple sets.
Cholak, Downey and Harrington proved a stronger version of the
Slaman-Woodin Conjecture.

 \begin{theorem}[Cholak, Downey
and Harrington  \cite{Onorbits}]\label{CDH:SWthm} There is a \ce set $A$ such that the
index set \mbox{$\{i\in\omega\mid W_i\cong A\}$} is
$\Sigma_1^1$-complete.
\end{theorem}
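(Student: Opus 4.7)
The plan is to reduce a known $\Sigma_1^1$-complete set --- for definiteness, the set of indices of ill-founded computable trees $T \subseteq \omega^{<\omega}$ --- to the orbit index set $\{i : W_i \cong A\}$ for a cleverly chosen hhsimple c.e.\ set $A$. The bridge from lattice-automorphisms of $\E$ to ordinary isomorphism of algebraic structures is supplied by two classical results for hhsimple sets: Maass's theorem (together with Soare's extension machinery) that the orbit of a hhsimple set $A$ in $\E$ is determined by the isomorphism type of the lattice $\L^*(A)$ of c.e.\ supersets of $A$ modulo finite difference, and Lachlan's theorem that every $\Sigma^0_3$ Boolean algebra is realized as $\L^*(A)$ for some hhsimple $A$. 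Granting these, the theorem reduces to producing a single $\Sigma^0_3$ Boolean algebra $B$ whose isomorphism type is $\Sigma_1^1$-complete among (indices of) presented $\Sigma^0_3$ Boolean algebras.

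For the coding step I would uniformly assign to each computable tree $T$ a $\Sigma^0_3$ Boolean algebra $B_T$ whose structure mirrors $T$. A standard device is an interval algebra of a linear order canonically attached to $T$ (say, the Kleene--Brouwer order, or an ordered sum of copies of $\Q$ indexed by nodes of $T$), decorated so that an infinite branch of $T$ manifests as a maximal atomless factor or a distinguished chain of ideals. The target algebra $B$ is then engineered to be the ``universal'' member of this family: if $T$ has an infinite path, an effective back-and-forth argument (carried out at the $\Sigma^0_3$ level, using the path as an oracle) produces an isomorphism $B_T \cong B$; if $T$ is well-founded, an arithmetical invariant of $B_T$ such as Cantor--Bendixson rank or the ordinal height of a definable ideal chain witnesses $B_T \not\cong B$. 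Pulling back through Lachlan's realization yields a uniformly computable sequence $W_{i(T)}$ of hhsimple sets with $\L^*(W_{i(T)}) \cong B_T$, and one takes $A$ to be any hhsimple set with $\L^*(A) \cong B$. Then $W_{i(T)} \cong A$ in $\E$ iff $B_T \cong B$ iff $T$ is ill-founded, giving the desired reduction.

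The main obstacle is the lift from the upstairs lattice $\L^*(A)$ back to the full lattice $\E$. Maass-type theorems tell us that isomorphism of $\L^*$ on hhsimple sets entails membership in the same $\E$-orbit, but one needs this correspondence to be robust in both directions simultaneously: the invariants distinguishing the $B_T$ must not be erasable by an automorphism of $\E$ acting nontrivially below $A$, and when $B_T \cong B$, the upstairs lattice isomorphism must extend to a full automorphism of $\E$ without colliding with the splitting structure beneath $A$. Arranging the coding so that it is simultaneously rigid enough to preserve $\Sigma_1^1$-complete worth of information and effective enough to admit Soare-style extension is the technical heart of the Cholak--Downey--Harrington construction, and is where the bulk of their work lies; in practice this is why the construction leaves the strictly hhsimple world and operates with $\D$-hhsimple sets, whose richer lattice-theoretic palette gives the extra room needed to realize the coding uniformly.
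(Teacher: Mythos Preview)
The paper does not prove this theorem; it is quoted from \cite{Onorbits} as background. So there is no ``paper's own proof'' to compare against here. That said, your proposal has a genuine gap that the paper itself flags.

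Your entire strategy runs through hhsimple sets: choose a hhsimple $A$ with $\mathcal{L}^*(A)$ a suitably complicated $\Sigma^0_3$ Boolean algebra, invoke Maass to identify the orbit of $A$ with the isomorphism class of $\mathcal{L}^*(A)$, and then code ill-foundedness of trees into Boolean-algebra isomorphism. But the very next sentence after the theorem in the paper reads: ``the sets $A$ witnessing Theorem~\ref{CDH:SWthm} cannot be simple or hhsimple (showing that the original idea behind the conjecture fails).'' This is not an incidental remark; it is exactly the obstruction to your plan. The paper also records the reason the hhsimple world is too tame: two hhsimple sets are automorphic iff they are $\Delta^0_3$-automorphic \cite[Theorem~1.3]{mr2004f:03077}. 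In other words, for hhsimple $A$ the orbit membership relation is arithmetically bounded and cannot carry $\Sigma^1_1$-complete content, no matter how cleverly you choose $\mathcal{L}^*(A)$.

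You seem to sense this, since your last paragraph concedes that ``in practice this is why the construction leaves the strictly hhsimple world and operates with $\mathcal{D}$-hhsimple sets.'' But that concession undoes the proposal: everything concrete you wrote (Maass's theorem, Lachlan's realization, the Boolean-algebra coding) lives in the hhsimple setting, and the one sentence gesturing toward $\mathcal{D}$-hhsimple sets is not a plan but an acknowledgment that a different construction is needed. The actual Cholak--Downey--Harrington argument does not pass through $\mathcal{L}^*(A)$ at all in the way you describe; the coding and the extension machinery are built for sets that are specifically \emph{not} hhsimple, and this is where the real work is.
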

\noindent
In a surprising twist (again see \cite{Onorbits}), the sets $A$
witnessing Theorem \ref{CDH:SWthm} cannot be simple or hhsimple
(showing that the original idea behind the conjecture fails).  It is
still open, however, if the sets in Theorem \ref{CDH:SWthm} can be
$\mathcal{D}$-hhsimple.
Moreover, the behavior of hhsimple sets under automorphisms is now
completely understood. Specifically, two hhsimple sets are automorphic
if and only if they are $\Delta_3^0$-automorphic \cite[Theorem
1.3]{mr2004f:03077}. There is no similar characterization of when
$\mathcal{D}$-hhsimple sets are automorphic.


Here we consider $\mathcal{D}$-maximal sets, a special case of
$\mathcal{D}$-hhsimple sets but a generalization of maximal sets, to
gain further insight into Questions \ref{Q:Orbits} and
\ref{Q:AuttoComp}.  A c.e.\ set $A$ is {\em $\mathcal{D}$-maximal} if
for all $W$ there is a c.e.\ set $D$ disjoint from $A$ such that
$W \subseteq^* A \sqcup D$ or $W \cup (A \sqcup D) =^* \omega$.  We
can understand a given $\mathcal{D}$-maximal set $A$ in terms of the
collection $\mathcal{D}(A)$ of c.e.\ sets that are disjoint from $A$.

The goal of this paper is to provide a complete nontrivial
classification of the $\mathcal{D}$-maximal sets in terms of how
$\mathcal{D}(A)$ is generated.  In Theorem
\ref{sec:nice-generating-sets-1}, we describe ten types of ways
$\mathcal{D}(A)$ can be generated for any c.e.\ set $A$.  We then show
in Theorem
\ref{sec:main-result-1} 
that there is a complete and incomplete $\mathcal{D}$-maximal set of
each type. 
The first six types of $\mathcal{D}$-maximal sets were already well
understood (\cite{Soare:74}, \cite{DSAutsOrbits}, \cite{MR2366962},
\cite{SomeOrbits}).  Furthermore, Herrmann and Kummer \cite{MR1264963}
had constructed $\mathcal{D}$-maximal sets that were not of the first
six types (in particular, as splits of hhsimple and atomless
$r$-maximal sets).  We, however, show that there are four types of
examples of $\mathcal{D}$-maximal sets besides the first six and that
each of these types breaks up into infinitely many orbits.  Moreover,
we provide an overarching framework for understanding and constructing
these examples.  We discuss $\mathcal{D}$-maximal sets of the first
six types and the type that arises as a split of an $r$-maximal set in
\S\ref{sec:main-result}.  In \S \ref{sec:build-hhsimple-like}, we show
how the remaining three types are very similar to the construction of
splits of hhsimple sets.  For ease of reading, we discuss open
questions as they arise.  In particular, open questions can be found
in \S\ref{Q0} 
and \S\ref{sec:anything-know-about}.

\section{Background and definitions}\label{D-maxbackground}

All sets considered in this paper are computably enumerable (c.e.),
infinite, and coinfinite unless explicitly specified.  Let
$\mathcal{E}^*$ be the structure $\mathcal{E}$ modulo the ideal of
finite sets $\mathcal{F}$.  By Soare \cite{Soare:74}, it is equivalent
to work with $\mathcal{E}^*$ instead of $\mathcal{E}$ in the sense
that two sets $A$ and $B$ are in the same orbit in $\mathcal{E}$ if
and only if they are in the same orbit in $\mathcal{E}^*$.  Given a
c.e.\ set $A$, we define
\begin{eqnarray*}
  \mathcal{L}(A)= (\{W \cup A\mid W \text{ a c.e.\ set}\}, \subseteq)\text{ and }\\
  \mathcal{E}(A)= (\{W \cap A\mid W \text{ a c.e.\ set}\}, \subseteq).
\end{eqnarray*}
We let $\mathcal{L}^*(A)$ be the structure $\mathcal{L}(A)$ modulo
$\mathcal{F}$, and $\mathcal{E}^*(A)$ be the structure
$\mathcal{E}(A)$ modulo $\mathcal{F}$.  Recall that $A$ is {\em
  maximal} if for all $B\in \mathcal{L}^*(A)$, if $B\not=^*A$, then
$B=^*\omega$.

If we understand the orbit of $A$, we can sometimes understand the
orbits of splits of $A$.
\begin{definition}\begin{itemize}
  \item[(i)] We call $A_0 \sqcup A_1 = A$ a {\em splitting of $A$},
    and we call $A_0$ and $A_1$ {\em splits of $A$} or {\em halves of
      the splitting of $A$}.  We say that this splitting is {\em
      trivial} if either of $A_0$ or $A_1$ are computable.
    \noindent
  \item[(ii)] We call $A_0 \sqcup A_1=A$ a \emph{Friedberg splitting}
    of $A$ if the following property holds for any c.e.\ $W$: if $W-A$
    is not c.e.\ then neither of $W-A_i$ are c.e. as well.

    \noindent
  \item[(iii)] Given a property $P$ of c.e.\ sets, we say that a
    noncomputable c.e.\ set $A$ is {\em hemi-P} if there is a
    noncomputable c.e.\ set $B$ disjoint from $A$ such that
    $A\sqcup B$ satisfies $P$.
  \end{itemize}
\end{definition}
Note that if $P$ is a definable property in $\mathcal{E}$ or
$\mathcal{E}^*$, then hemi-$P$ is also definable there.

\subsection{$\mathcal{D}$-hhsimple and $\mathcal{D}$-maximal sets}

\subsubsection{Motivation}

Recall that a coinfinite set $A$ is {hhsimple} if and only if
$\mathcal{L}^*(A)$ is a boolean algebra (\cite{Lachlan:68*3}, see also
\citet{Soare:87}). Hence, $A$ is maximal if and only if
$\mathcal{L}^*(A)$ is the two element boolean algebra.

\begin{theorem}[Lachlan \cite{Lachlan:68*3}]\label{T:Lachlan} If  a set $H$ is hhsimple, then
  $\mathcal{L}^*(H)$ is a $\Sigma^0_3$ boolean algebra.  Moreover, for
  every $\Sigma^0_3$ boolean algebra $\mathcal{B}$, there is a
  hhsimple set $H$ such that $\mathcal{L}^*(H)$ is isomorphic to
  $\mathcal{B}$.
\end{theorem}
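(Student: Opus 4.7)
The plan is to handle the two directions of Lachlan's theorem separately. For the upper bound, I would index the elements of $\mathcal{L}^*(H)$ by the map $i \mapsto [W_i \cup H]$ and verify that the boolean algebra relations are $\Sigma^0_3$ under this indexing. The key computation is that $W_i \cup H \subseteq^* W_j \cup H$ unfolds to $(\exists n)(\forall x \geq n)(x \in W_i \to x \in W_j \cup H)$. The inner implication $x \in W_i \to x \in W_j \cup H$ is a boolean combination of $\Sigma^0_1$ predicates (so at worst $\Delta^0_2$), making the bounded universal a $\Pi^0_2$ statement and the whole formula $\Sigma^0_3$; hence $=^*$ is $\Sigma^0_3$ as well. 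The join is outright computable on indices via an index for $W_i \cup W_j$, and because the prior Lachlan characterization already tells us $\mathcal{L}^*(H)$ is a boolean algebra, complementation is $\Sigma^0_3$-effective: an index $j$ such that $[W_j \cup H]$ is the complement of $[W_i \cup H]$ can be located by a $\Sigma^0_3$ search. This exhibits $\mathcal{L}^*(H)$ as a $\Sigma^0_3$ boolean algebra.

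For the realization direction, I would fix a $\Sigma^0_3$ presentation of $\mathcal{B}$: concretely, a uniformly computable sequence of finite boolean algebras $\mathcal{B}_s$ together with a $\Sigma^0_3$ approximation to the identification of elements across stages, whose true limit is $\mathcal{B}$. I would then construct $H$ in stages so that at stage $s$ the set $\omega \setminus H_s$ is partitioned into pieces $X^s_b$, one for each atom $b$ of $\mathcal{B}_s$. As the approximation evolves, pieces merge, split, or are dumped wholesale into $H$, mirroring the evolving atom structure. In parallel, for each c.e.\ $W_e$ I would impose a requirement $R_e$ asserting that $W_e \cup H$ eventually agrees mod finite with a specific union of limiting pieces, namely the element of $\mathcal{B}$ that is the ``boolean-algebra coordinate'' of $W_e$. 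A tree-of-strategies layout organizes the interaction of these requirements along a $\Sigma^0_3$ true path.

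The main obstacle will be ensuring that $\mathcal{L}^*(H)$ contains no spurious elements beyond those coming from $\mathcal{B}$; equivalently, that every $W_e$ is ultimately absorbed (mod finite) into one of the unions of limiting pieces. This is where hhsimplicity has to be built in rather than derived. Because the placement information for $W_e$ inside $\mathcal{B}$ is itself $\Sigma^0_3$, the construction is inherently infinite-injury, and a $0'''$-style true-path verification is required to show that along the true path each piece is preserved cofinitely often and each $W_e$-strategy stabilizes at its correct coordinate. Balancing the merging/splitting of pieces driven by $\mathcal{B}_s$ against the restraints imposed by the $W_e$-requirements is the technical core, and it is exactly at this point that the isomorphism $\mathcal{L}^*(H) \cong \mathcal{B}$ becomes manifest rather than merely an upper bound.
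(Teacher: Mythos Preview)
The paper does not prove this theorem at all: it is quoted as a classical result of Lachlan and is explicitly treated as a blackbox (see \S\ref{sec:build-hhsimple-like}, where the authors write ``Our approach is to treat this theorem as a blackbox''). The only remark the paper makes toward a proof is the one-line observation following the statement that the ordering on a $\Sigma^0_3$ boolean algebra is $\mathbf{0'''}$-computable, which is essentially your first paragraph. So there is no in-paper proof to compare your proposal against.

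As for the proposal itself: the upper-bound direction is fine and standard. For the realization direction, your outline captures the correct high-level architecture (stagewise partition of $\overline{H_s}$ into pieces indexed by atoms of a finite approximation, requirements forcing each $W_e$ to align with a boolean coordinate, infinite-injury organized on a tree), but it remains a plan rather than a proof. The genuine work in Lachlan's argument lies in the precise mechanism---typically $e$-state machinery or an equivalent---that simultaneously (i) realizes each element of $\mathcal{B}$ as a c.e.\ superset of $H$ and (ii) prevents any c.e.\ set from carving out a class in $\mathcal{L}^*(H)$ not already present in $\mathcal{B}$. Your sketch names this tension (``no spurious elements'') but does not supply the device that resolves it; in particular, ``merging, splitting, or dumping pieces'' driven by $\mathcal{B}_s$ must be coordinated with restraint from lower-priority $W_e$-requirements in a way that your description does not yet specify. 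If you intend to write this out, consult Lachlan's original paper or the exposition in Soare's book for the $e$-state bookkeeping that makes the balancing act go through.
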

\noindent
Given Theorem \ref{T:Lachlan}, we say that a hhsimple set $H$ has {\em
  flavor} $\mathcal{B}$ if $\mathcal{L}^*(H)$ is isomorphic to the
$\Sigma^0_3$ boolean algebra $\mathcal{B}$.  Note that the ordering
$\le$ on a $\Sigma_3^0$ boolean algebra is $\mathbf{0'''}$-computable.

\subsubsection{Working modulo $\mathcal{D}(A)$}

Given a set $A$, we define
\begin{equation*}
  \mathcal{D}(A) = \{B: B \in \mathcal{L}(A)\ \&\ B -
  A \mbox{ is c.e.}\},
\end{equation*}  and let  $\mathcal{D}^*(A)$ be
the structure $\mathcal{D}(A)$ modulo  $\mathcal{F}$.
Since $\mathcal{D}^*(A)$ is an ideal in the lattice
$\mathcal{L}^*(A)$, we can take the quotient lattice
$\mathcal{L}^*(A)/ \mathcal{D}^*(A)$.  Theorem \ref{T:Lachlan}
motivates the following definition.

\begin{definition} (Herrmann and Kummer
  \cite{MR1264963})\label{BAdefDhhsimple} A set $A$ is {\em
    $\mathcal{D}$-hhsimple} if $\mathcal{L}^*(A)/\mathcal{D}^*(A)$ is
  a boolean algebra, and $A$ is {\em $\mathcal{D}$-maximal} if
  $\mathcal{L}^*(A)/\mathcal{D}^*(A)$ is the two element boolean
  algebra.
\end{definition}
\noindent
By unraveling Definition \ref{BAdefDhhsimple}, we have the following
working definition of $\mathcal{D}$-maximality.

\begin{definition}
  \label{def:Dmaxequiv}
  A set $A$ is {\em $\mathcal{D}$-maximal} if for all $W$ there is a
  c.e.\ set $D$ disjoint from $A$ such that $W \subseteq^* A \sqcup D$
  or $W \cup (A \sqcup D) =^* \omega$.
\end{definition}

Another useful characterization of the $\mathcal{D}$-maximal sets is
given in the next lemma.

\begin{lemma}[\citet{SomeOrbits} Lemma 2.2]\label{L:Dmaxchar}
  Let $A$ be a c.e.\ noncomputable set.  The set $A$ is
  $\mathcal{D}$-maximal if and only if, for all c.e.\ $W\supseteq A$,
  either $W-A$ is c.e. or there exists a computable $R$ such that
  $A\subseteq R\subseteq W$. \end{lemma}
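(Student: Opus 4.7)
My plan is to prove both directions of the equivalence, with the forward direction holding the real content.

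For the reverse direction ($\Leftarrow$), given an arbitrary c.e.\ set $W$, I would simply apply the hypothesis to $W\cup A\supseteq A$ and directly produce the witness required by Definition~\ref{def:Dmaxequiv}. If $(W\cup A)-A=W-A$ is c.e., take $D=W-A$, which is disjoint from $A$ and makes $W\subseteq A\sqcup D$. Otherwise, given a computable $R$ with $A\subseteq R\subseteq W\cup A$, take $D=\overline R$: this is computable (hence c.e.), disjoint from $A$, and satisfies $W\cup A\cup D\supseteq R\cup\overline R=\omega$.

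For the forward direction ($\Rightarrow$), I would start by applying Definition~\ref{def:Dmaxequiv} to the given $W\supseteq A$ to obtain a c.e.\ set $D$ disjoint from $A$ with either $W\subseteq^* A\sqcup D$ or $W\cup(A\sqcup D)=^*\omega$. In the first subcase, $D\subseteq\overline A$ gives $W\cap D\subseteq W-A$, while $W\subseteq^* A\sqcup D$ gives $W-A\subseteq^* D$; combining these yields $W-A=^* W\cap D$, so $W-A$ is c.e. In the second subcase, using $A\subseteq W$ we have $W\cup D=^*\omega$, and the plan is a standard ``race'' construction. Define the partial computable function
\[
f(n)=\mu s\,(n\in W_s\cup D_s),
\]
whose domain is the cofinite set $W\cup D$, and let $R=\{n:f(n)\!\downarrow\text{ and }n\in W_{f(n)}\}$. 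Each $n\in A$ has $n\in W$ and $n\notin D$, so its first appearance in $W\cup D$ must come via $W$, placing $n$ in $R$; conversely, $n\in R$ forces $n\in W_{f(n)}\subseteq W$. Hence $A\subseteq R\subseteq W$. The set $R$ is c.e.\ directly from its $\Sigma_1$ definition, while
\[
\overline R=\bigl(\omega\setminus(W\cup D)\bigr)\cup\{n:f(n)\!\downarrow\text{ and }n\in D_{f(n)}\setminus W_{f(n)}\}
\]
is the union of a finite set and a c.e.\ set, hence c.e. Therefore $R$ is computable.

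The only nontrivial ingredient is the race construction in the second subcase of the forward direction, which upgrades the c.e.\ ``almost-separator'' $D$ (satisfying $\overline W\subseteq^* D$) into a genuine computable separator between $A$ and $\overline W$. The key observation is that cofiniteness of $W\cup D$ forces the ``undecided'' elements to form a finite (hence c.e.) set, so the co-c.e.\ side of the computability of $R$ goes through cleanly; everywhere else, whether $n$ is put into $R$ is decided effectively by which of $W$ or $D$ enumerates $n$ first.
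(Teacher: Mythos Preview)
The paper does not supply its own proof of this lemma; it is quoted from \cite{SomeOrbits} without argument, so there is nothing to compare against directly. Your proof is correct and is the standard one: the reverse direction is immediate from Definition~\ref{def:Dmaxequiv}, and in the forward direction the only real work is the second subcase, where your race construction is exactly the Reduction Principle applied to the c.e.\ sets $W$ and $D$ with $W\cup D=^*\omega$, yielding a computable $R$ with $A\subseteq R\subseteq W$. One small remark: in the first subcase you conclude $W-A=^*W\cap D$ and then assert $W-A$ is c.e.; this is fine since any set almost equal to a c.e.\ set is itself c.e., but it is worth saying explicitly.
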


Herrmann and Kummer \cite{MR1264963} studied the
$\mathcal{D}$-hhsimple sets in the context of {\em diagonal} sets. A
set is {\em diagonal} if it has the form
$\{e\in\omega\mid \psi_e(e)\}$ for some computable enumeration
$\{\psi_i\}_{i\in\omega}$ of all partial computable functions. In
\cite{MR1264963}, they showed that a set is not diagonal if and only
if it is computable or $\mathcal{D}$-hhsimple. Note that this result
implies that the property of being diagonal is elementary
lattice-theoretic.

\subsection{Known examples of $\mathcal{D}$-maximal
  sets}\label{S:introexDmax}

Maximal sets and hemimaximal sets (which form distinct orbits
\cite{Soare:74}, \cite{DSAutsOrbits}) are clearly
$\mathcal{D}$-maximal.  Similarly, a set that is maximal on a
computable set is also $\mathcal{D}$-maximal.
In these three cases, for the $\mathcal{D}$-maximal set $A$ there is a
$W$ such $A \cup W$ is maximal.  As we will see, this does not occur
for other types of $\mathcal{D}$-maximal sets.  
Others, however, have constructed additional kinds of
$\mathcal{D}$-maximal sets, in particular, {\em Herrmann} and {\em
  hemi-Herrmann sets} and sets with {\em $A$-special lists}, which we
define now.  It is easy to check that these sets are
$\mathcal{D}$-maximal from their respective definitions.

\begin{definition}
  \begin{itemize}
  \item[(i)] We say that a c.e.\ set $A$ is {\em $r$-separable} if,
    for all c.e.\ sets $B$ disjoint from $A$, there is a computable
    set $C$ such that $B\subseteq C$ and $A\subseteq \overline{C}$.
    We say that $A$ is {\em strongly} $r$-separable if, additionally,
    we can choose $C$ so that $C-B$ is infinite.
  \item[(ii)] We say that a set $A$ is {\em Herrmann} if $A$ is both
    $\mathcal{D}$-maximal and strongly $r$-separable.

  \item[(iii)] Given a set $A$, we call a list of c.e.\ sets
    $\mathcal{F} = \{ F_i : i \in \omega\}$ an \emph{$A$-special list}
    if $\mathcal{F}$ is a collection of pairwise disjoint
    noncomputable sets such that $F_0 = A$ and for all c.e.\ sets $W$,
    there is an $i$ such that $W \subseteq^* \bigsqcup_{l\leq i}F_l$
    or $W \cup \bigsqcup_{l\leq i}F_l =^* \omega$.

  \item[(iv)] We say a set $A$ is \emph{$r$-maximal} if for every
    computable set $R$, either $R \cap \overline{A} = ^* \emptyset$
    (so $R \subseteq^* A$) or
    $\overline{R} \cap \overline{A} =^* \emptyset$ (so
    $\overline{A} \subseteq^* R$), i.e., no infinite computable set
    splits $\overline{A}$ into two infinite sets.

  \item[(v)] A c.e.\ set $B$ is \emph{atomless} if for every c.e.\ set
    $C$, if $B\subseteq C\neq^* \omega$, then there is a c.e.\ set $E$
    such that $C \subsetneq^* E \subsetneq^* \omega$, i.e., $B$ does
    not have a maximal superset.

  \end{itemize}
\end{definition}

Herrmann and hemi-Herrmann sets were defined by Hermann and further
discussed in \cite{SomeOrbits}.  The main results in \cite{SomeOrbits}
for our purposes are that such sets exist (Theorem 2.5) and that these
sets form distinct (Theorem 6.9) definable (Definition 2.3)
orbits (Theorems 4.1, 6.5) each containing a complete set (Theorems
7.2, 6.7(i)).

The notion of a set $A$ with an $A$-special list was introduced in
\cite[\S 7.1]{MR2366962}. There, Cholak and Harrington showed that
such sets exist and form a definable $\Delta^0_4$ but not $\Delta^0_3$
orbit.  This orbit remains the only concrete example of an orbit that
is not $\Delta^0_3$.
Furthermore, as mentioned earlier Herrmann and Kummer \cite{MR1264963}
had constructed $\mathcal{D}$-maximal splits of hhsimple and atomless
$r$-maximal sets in addition to the ones mentioned above.  We will
discuss these examples later (see \S\ref{sec:herrm-kumm-result}), but
first we explore the notion of a {\em generating set} for
$\mathcal{D}(A)$ for an arbitrary (not necessarily
$\mathcal{D}$-maximal) set $A$.

\section{Generating sets for
  $\mathcal{D}(A)$}\label{sec:results-about-sets}

In this section, we only assume that the sets considered are
computably enumerable. In later sections, we will work explicitly with
$\mathcal{D}$-maximal sets.  We will use the framework of {\em
  generating sets} to understand and classify the different kinds of
$\mathcal{D}$-maximal sets.

\begin{definition}
  We say a (possibly finite or empty) collection of c.e.\ sets
  $\mathcal{G} = \{D_0, D_1, \ldots \}$ {\em generates}
  $\mathcal{D}(A)$ (equivalently $\mathcal{G}$ is a {\em{generating
      set} for $\mathcal{D}(A)$}) if each $D_i$ is disjoint from $A$
  for all $i\in\omega$ and for all c.e.\ sets $D$ that are disjoint
  from $A$, there is a finite set $F\subset \omega$ such that
  $D \subseteq^* \bigcup_{j\in F}D_j$.  In this case, we say that
  $\{D_j\mid j\in F\}$ {\em covers} $D$.  If $\mathcal{G}$ generates
  $\mathcal{D}(A)$, we write $\mathcal{D}(A)=\gen{\mathcal{G}}$.  We
  say $\{ D_0, D_1, \ldots \}$ {\em partially generates}
  $\mathcal{D}(A)$ if there is some collection of sets $\mathcal{G}$
  containing $\{ D_0, D_1, \ldots \}$ such that
  $\gen{\mathcal{G}}=\mathcal{D}(A)$.
\end{definition}

\noindent
We list a few basic observations.

\begin{lemma}\label{sec:gener-sets-simple}
{\em(i)} Generating sets always exist for $\mathcal{D}(A)$. In particular,
$\mathcal{D}(A)$ is generated by the collection of all c.e.\ sets
that are disjoint from $A$.  \\
\noindent {\em (ii)} Let $\Phi$ be an automorphism of
$\mathcal{E}^*$. If for all c.e.\ $W$, we set $\widehat{W}:=\Phi(W)$,
then $\{D_0, D_1, \ldots\}$ generates $\mathcal{D}(A)$ if and only if
$\{\widehat{D}_0, \widehat{D}_1, \ldots
\ldots \}$ generates $\mathcal{D}(\widehat{A})$.\\
\noindent {\em (iii)} $\mathcal{D}(A) = \gen{\emptyset}$ iff $A$ is
simple.
\end{lemma}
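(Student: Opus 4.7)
The plan is to handle the three parts in turn, since each is essentially a bookkeeping exercise against the definition of ``generates.''

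For part (i), I would just verify the definition directly with $\mathcal{G}$ equal to the collection of all c.e.\ sets disjoint from $A$. Every member of $\mathcal{G}$ is by construction disjoint from $A$, and any c.e.\ $D$ disjoint from $A$ is itself an element $D_j$ of $\mathcal{G}$, so taking $F=\{j\}$ yields $D\subseteq^* D_j$. No obstacle here; this is a one-line observation motivating why the notion of ``generating set'' is well posed and makes the concept nontrivial only when we demand that $\mathcal{G}$ be small.

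For part (ii), the key point is that an automorphism $\Phi$ of $\mathcal{E}^*$ preserves the lattice operations and inclusion modulo finite: $B\cap C=^*\emptyset$ iff $\widehat B\cap\widehat C=^*\emptyset$, and $B\subseteq^* C$ iff $\widehat B\subseteq^*\widehat C$, and $\widehat{B\cup C}=^*\widehat B\cup\widehat C$. First I would note that each $D_i$ is disjoint from $A$ iff each $\widehat D_i$ is disjoint from $\widehat A$. Then, for the ``covers every element of $\mathcal{D}(A)$'' clause, I would use that every c.e.\ set $E$ disjoint from $\widehat A$ is of the form $\widehat D$ with $D=\Phi^{-1}(E)$, and $D$ is automatically c.e.\ and disjoint from $A$; applying $\Phi$ to the covering relation $D\subseteq^*\bigcup_{j\in F}D_j$ yields $\widehat D\subseteq^*\bigcup_{j\in F}\widehat D_j$, and running the argument with $\Phi^{-1}$ gives the converse. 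No hard step; the only thing to be careful about is that the quantifier ``for all c.e.\ $D$ disjoint from $A$'' is transported correctly by $\Phi$, which follows from $\Phi$ being a bijection on $\mathcal{E}^*$.

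For part (iii), I would unwind the definitions. The condition $\mathcal{D}(A)=\gen{\emptyset}$ says that every c.e.\ $D$ disjoint from $A$ satisfies $D\subseteq^*\bigcup_{j\in\emptyset}D_j=\emptyset$, i.e., $D$ is finite. Equivalently, $\overline A$ contains no infinite c.e.\ subset, which is exactly the definition of $A$ being simple (recall our standing assumption that $A$ is coinfinite, so $A$ itself is not computable). The potential pitfall, and the one step worth stating carefully, is keeping track of the standing convention on finiteness: ``generated by the empty collection'' must be read as equality in $\mathcal{D}^*(A)$ modulo $\mathcal{F}$, so that membership reduces to being almost contained in $\emptyset$, i.e., finite. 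With that convention the equivalence with simplicity is immediate.
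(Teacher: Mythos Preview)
Your proposal is correct in all three parts; the paper itself gives no proof at all, listing the lemma among ``a few basic observations'' and moving on immediately. Your direct verification against the definition is exactly the routine check the authors are tacitly leaving to the reader.
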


\subsection{Simplifying generating sets}\label{SS:simplegensets}
Generating sets for $\mathcal{D}(A)$ are far from unique.
Here we develop some tools for finding less complex generating sets
for $\mathcal{D}(A)$.  We use different tools based on whether or not
$\mathcal{D}(A)$ has a finite generating set.

\subsubsection{Finite generating sets}

\begin{lemma}\label{sec:finitegen}
  If a finite collection of sets ${\mathcal{G}}$ generates
  $\mathcal{D}(A)$, then $\mathcal{D}(A)=\gen{\emptyset}$,
  $\mathcal{D}(A)= \gen{R}$ for some infinite computable set $R$, or
  $\mathcal{D}(A)= \gen{W}$ for some noncomputable c.e. set $W$.
  Moreover, if $\{ D \}$ and $\{ \tilde{D} \}$ both generate
  $\mathcal{D}(A)$, then $D =^* \tilde{D}$. \end{lemma}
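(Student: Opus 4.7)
The plan is to show that any finite generating set can be collapsed to a single generator by taking unions, and then case-split on the complexity of that generator.

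First I would set $D=\bigcup_{i=0}^{n}D_i$, where $\mathcal{G}=\{D_0,\dots,D_n\}$ is the given finite generating set. Then $D$ is c.e.\ and disjoint from $A$ (since each $D_i$ is disjoint from $A$), so $D$ is an eligible generator. Moreover, for any c.e.\ $B$ disjoint from $A$, the generation hypothesis produces a finite $F\subseteq\{0,\dots,n\}$ with $B\subseteq^{*}\bigcup_{j\in F}D_j\subseteq D$, so $\{D\}$ by itself generates $\mathcal{D}(A)$. This is the one nontrivial reduction in the argument.

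Next I would do a case analysis on the character of $D$. If $D$ is finite, then every c.e.\ set disjoint from $A$ is $\subseteq^{*}D$ and hence finite, so $\mathcal{D}(A)=\langle\emptyset\rangle$ (equivalently, by Lemma~\ref{sec:gener-sets-simple}(iii), $A$ is simple). If $D$ is infinite and computable, then $R=D$ witnesses $\mathcal{D}(A)=\langle R\rangle$. If $D$ is noncomputable, then $W=D$ witnesses $\mathcal{D}(A)=\langle W\rangle$. These three cases are exhaustive.

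For the moreover clause, suppose $\{D\}$ and $\{\tilde D\}$ both generate $\mathcal{D}(A)$. Since $\tilde D$ is c.e.\ and disjoint from $A$, applying the generation property of $\{D\}$ to $\tilde D$ yields a finite $F\subseteq\{0\}$ with $\tilde D\subseteq^{*}\bigcup_{j\in F}D=D$, hence $\tilde D\subseteq^{*}D$. By symmetry $D\subseteq^{*}\tilde D$, so $D=^{*}\tilde D$.

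There is no real obstacle here: the argument is a direct unpacking of the definition of a generating set, with the only mild observation being that a finite union of c.e.\ sets disjoint from $A$ remains c.e.\ and disjoint from $A$. The content of the lemma is really the trichotomy it sets up (empty / infinite computable / noncomputable), which will be used to classify $\mathcal{D}$-maximal sets in the subsequent sections.
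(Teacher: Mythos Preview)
Your proof is correct and follows exactly the same approach as the paper: take the union of the finitely many generators, observe it is c.e.\ and disjoint from $A$ and hence a single generator, then case-split on whether this union is finite, infinite computable, or noncomputable; the ``moreover'' clause is handled by mutual almost-containment from the definition of generating set. You have simply spelled out the details more explicitly than the paper's terse version.
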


\begin{proof}
  The union $W$ of the finitely many sets in ${\mathcal{G}}$ is
  c.e. and disjoint from $A$ and clearly generates
  $\mathcal{D}(A)$. If $W$ is finite, then
  $\mathcal{D}(A)= \gen {\emptyset }$, and otherwise, we are in the
  remaining two cases.  For the last statement,
  $\tilde{D} \subseteq^* D$ and $D \subseteq^* \tilde{D}$ by the
  definition of generating set.
\end{proof}

The collection of all c.e.\ sets that have finite generating sets is
definable.
\begin{lemma}\label{L:FinGenDef}
  The statement ``A single set generates $\mathcal{D}(A)$" is an
  elementarily definable statement in $\mathcal{E}^*$ under
  inclusion. \end{lemma}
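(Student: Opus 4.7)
The plan is to exhibit an explicit first-order formula $\varphi(X)$ in the single-relation language $\{\subseteq\}$ of $\mathcal{E}^*$ such that $\mathcal{E}^* \models \varphi(A)$ exactly when some c.e.\ set $D$ satisfies $\mathcal{D}(A) = \gen{D}$. Unfolding the definition of generating set in the singleton case, ``$\{D\}$ generates $\mathcal{D}(A)$'' says just that $D$ is disjoint from $A$ modulo finite and that every c.e.\ set disjoint from $A$ is contained in $D$ modulo finite. Both clauses are $\subseteq^*$-statements, so the real task is to express disjointness modulo finite using only $\subseteq$.

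The key observation is that, in $\mathcal{E}^*$, the equivalence class of the finite sets is the unique minimum element of $\subseteq$, and is therefore definable by $\mathrm{zero}(X) \equiv \forall Y\,(X \subseteq Y)$. Since $\mathcal{E}^*$ is a lattice with $[A]\wedge[B] = [A\cap B]$, disjointness modulo finite is precisely the statement that this meet is the zero class, which I would capture by
\[
\mathrm{disj}(X, Y) \;\equiv\; \forall Z\, \bigl( (Z \subseteq X \wedge Z \subseteq Y) \to \mathrm{zero}(Z) \bigr).
\]
With this abbreviation in hand, the desired formula is
\[
\varphi(A) \;\equiv\; \exists D\, \bigl[ \mathrm{disj}(D, A) \wedge \forall E\, \bigl( \mathrm{disj}(E, A) \to E \subseteq D \bigr) \bigr].
\]

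I do not expect a genuine obstacle: the lemma is essentially a book-keeping remark saying that the inclusion language of $\mathcal{E}^*$ already sees disjointness, and therefore already sees the principal-ideal structure of $\mathcal{D}(A)$. The one point worth flagging is that one must interpret things in $\mathcal{E}^*$ rather than in $\mathcal{E}$ itself, so that $[\mathcal{F}]$ is genuinely the minimum and $\mathrm{zero}$ is definable; this costs nothing thanks to Soare \cite{Soare:74}. Notice also that $\varphi$ uniformly handles the three alternatives identified in Lemma~\ref{sec:finitegen}: the case ``$A$ simple'' is witnessed by taking $D$ in the zero class, and the cases of an infinite computable or noncomputable singleton generator are witnessed by those generators themselves.
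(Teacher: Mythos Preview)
Your proposal is correct and is essentially the intended argument: the paper states this lemma without proof, treating it as immediate, and when it later proves the closely related Lemma~\ref{sec:nice-generating-sets} it uses exactly your characterization (``there is a c.e.\ set $D$ disjoint from $A$ such that every c.e.\ $W$ disjoint from $A$ satisfies $W\subseteq^* D$''). Your explicit unpacking of $\mathrm{zero}$ and $\mathrm{disj}$ in the language of $\subseteq$ just fills in the routine details the paper omits.
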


\subsubsection{Infinite generating sets} Infinite generating sets can
be much more complex, depending on whether all (or many of) the
elements can be chosen to be computable or pairwise disjoint.

\begin{lemma}\label{sec:computable}
  If $\{{R}_0, {R}_1, \ldots \}\cup \mathcal{G}$ generates
  $\mathcal{D}(A)$ where ${R}_i$ is computable for all $i\in\omega$,
  then there exists a pairwise disjoint collection of computable sets
  $\{\tilde{R}_0, \tilde{R}_1, \ldots \}$ so that
  $\{\tilde{R}_0, \tilde{R}_1, \ldots \}\cup \mathcal{G} $ generates
  $\mathcal{D}(A)$.
\end{lemma}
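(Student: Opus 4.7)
The plan is to replace the sequence of computable sets $\{R_0, R_1, \ldots\}$ by the standard disjointification, leaving $\mathcal{G}$ untouched. Specifically, I would set
\[
\tilde{R}_0 = R_0, \qquad \tilde{R}_i = R_i \setminus \bigl(R_0 \cup R_1 \cup \cdots \cup R_{i-1}\bigr) \text{ for } i \geq 1,
\]
and claim that $\{\tilde{R}_0, \tilde{R}_1, \ldots\} \cup \mathcal{G}$ is the desired generating set.

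The first step is to verify the easy structural properties. Each $\tilde{R}_i$ is a Boolean combination of finitely many computable sets, hence computable. Pairwise disjointness is immediate: if $i < j$, then $\tilde{R}_j \subseteq \omega \setminus R_i \subseteq \omega \setminus \tilde{R}_i$. Each $\tilde{R}_i$ is disjoint from $A$ because $\tilde{R}_i \subseteq R_i$ and $R_i$ was already disjoint from $A$ by the assumption that $\{R_0, R_1, \ldots\} \cup \mathcal{G}$ generates $\mathcal{D}(A)$.

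The second step is to check that the new collection still generates $\mathcal{D}(A)$. The key observation is that for every $n$,
\[
\bigcup_{i \leq n} \tilde{R}_i = \bigcup_{i \leq n} R_i,
\]
which follows by a trivial induction on $n$. Now let $D$ be any c.e.\ set disjoint from $A$. Since $\{R_0, R_1, \ldots\} \cup \mathcal{G}$ generates $\mathcal{D}(A)$, there are finite index sets $F \subset \omega$ and a finite subcollection $\mathcal{G}_0 \subseteq \mathcal{G}$ such that $D \subseteq^* \bigcup_{i \in F} R_i \cup \bigcup \mathcal{G}_0$. Taking $n = \max F$, the displayed equality gives $\bigcup_{i \in F} R_i \subseteq \bigcup_{i \leq n} \tilde{R}_i$, so $D$ is covered by $\{\tilde{R}_0, \ldots, \tilde{R}_n\} \cup \mathcal{G}_0$, as required.

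There is no serious obstacle here; the argument is routine disjointification. The only minor point worth flagging is that some of the $\tilde{R}_i$ may become finite or even empty if one of the original $R_i$ happens to be covered (mod finite) by the earlier ones, but this causes no difficulty since the definition of a generating set allows finite or empty members, and the covering argument above is unaffected.
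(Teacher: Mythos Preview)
Your proof is correct and uses the same disjointification idea as the paper. The paper's version adds one small refinement: at each step it sets $\tilde{R}_{n+1} = R_m - \bigsqcup_{i \le n}\tilde{R}_i$ for the \emph{least} $m$ making this difference infinite (and terminates if no such $m$ exists), so that every $\tilde{R}_i$ it produces is infinite. As you already flag, the lemma as stated does not require infiniteness, but the infinite version is what is actually invoked later when producing Type~4 generating sets.
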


\begin{proof}
  If $\{{R}_0, {R}_1, \ldots \}\cup \mathcal{G}$ generates
  $\mathcal{D}(A)$, we inductively
  define 
  $\{\tilde{R}_0, \tilde{R}_1, \ldots \}$. Let $\tilde{R}_0 =
  {R}_0$.
  Given the pairwise disjoint collection of computable sets
  $\{\tilde{R}_0, \ldots, \tilde{R}_n \}$, let $m$ be the least index
  such that ${R}_m - \bigsqcup_{i\le n} \tilde{R}_i$ is infinite. If
  no such $m$ exists,
  $\{\tilde{R}_0, \ldots, \tilde{R}_n \}\cup \mathcal{G}$ generates
  $\mathcal{D}(A)$. 
  
  Otherwise, let
  $\tilde{R}_{n+1} = {R}_m - \bigsqcup_{i\le n} \tilde{R}_i = {R}_m
  \cap \overline{\bigsqcup_{i\le n} \tilde{R}_i}$.
  The collection $\{\tilde{R}_0, \tilde{R}_1, \ldots \}$ satisfies the
  conclusion of the lemma since for each $i\in\omega$ there is an $m$
  such that
  $\bigcup_{j\leq i} {R}_j \subseteq^* \bigsqcup_{j\leq m}
  \tilde{R}_j$.
\end{proof}



\begin{lemma}\label{sec:disjoint}
  If $\mathcal{D}(A)$ is generated by an infinite collection of
  pairwise disjoint sets, then $\mathcal{D}(A)$ is also generated by
  an infinite collection of pairwise disjoint sets containing only
  computable sets, only noncomputable sets, or only computable sets
  and one noncomputable set.
\end{lemma}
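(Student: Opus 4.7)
The plan is to do case analysis on how many of the given generators are noncomputable. Let $\{D_i : i \in \omega\}$ be the given infinite pairwise disjoint collection with $\mathcal{D}(A) = \gen{\{D_i : i \in \omega\}}$, and partition $\omega = I_C \sqcup I_N$, where $I_C$ collects the indices of the computable generators and $I_N$ those of the noncomputable ones. The only ingredient I need beyond bookkeeping is the following elementary observation: if $E_0, \ldots, E_k$ are pairwise disjoint c.e.\ sets and at least one is noncomputable, then $E_0 \sqcup \cdots \sqcup E_k$ is noncomputable. Otherwise the union would be computable, and for each $j$ the set $E_j$ would equal the intersection of the computable union with the co-c.e.\ set $\overline{\bigcup_{i \neq j} E_i}$, making $E_j$ both c.e.\ and co-c.e., hence computable, a contradiction. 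In particular, adjoining a disjoint computable set to a noncomputable c.e.\ set keeps it noncomputable.

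Case 1 ($I_N = \emptyset$): every $D_i$ is already computable and the original family itself witnesses the conclusion. Case 2 ($I_N$ finite and nonempty): since the original collection is infinite, $I_C$ is infinite, and I would set $N := \bigsqcup_{i \in I_N} D_i$, which is c.e., disjoint from $A$, and noncomputable by the observation. The replacement collection $\{D_i : i \in I_C\} \cup \{N\}$ is pairwise disjoint (inherited), infinite, consists of only computable sets and the single noncomputable set $N$, and generates $\mathcal{D}(A)$ because each $D_i$ from the original family is either itself in the new family or contained in $N$.

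Case 3 ($I_N$ infinite): enumerate $I_N = \{n_0, n_1, \ldots\}$ and choose an injection $f : I_C \to \omega$, which exists because $I_N$ is infinite. For each $c \in I_C$, replace $D_{n_{f(c)}}$ by $\tilde D_{n_{f(c)}} := D_{n_{f(c)}} \sqcup D_c$, which is noncomputable by the observation, and keep $D_n$ unchanged for $n \in I_N \setminus f(I_C)$. The resulting collection is pairwise disjoint (again inherited), indexed by the infinite set $I_N$, consists entirely of noncomputable sets, and covers every original $D_i$, so it generates $\mathcal{D}(A)$.

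I anticipate no significant obstacle: the entire argument is routine case analysis built on the single observation about disjoint unions of c.e.\ sets, and pairwise disjointness is automatically preserved by the merging operations, because at every step we only take disjoint unions of sets drawn from the original pairwise disjoint family.
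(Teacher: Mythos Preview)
Your proof is correct and follows essentially the same approach as the paper: both argue by cases on how many of the pairwise disjoint generators are noncomputable, merging the finitely many noncomputable ones into a single set in one case, and absorbing each computable generator into a distinct noncomputable one in the other. Your write-up is slightly more careful with the indexing (the paper's $\tilde{D}_i := D_i \sqcup R_i$ tacitly assumes matching index sets), and you spell out the elementary fact that a disjoint union of c.e.\ sets with a noncomputable summand is noncomputable, which the paper simply asserts.
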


\begin{proof}
  Suppose that the collection
  $\mathcal{G}=\{D_0, D_1, \ldots, R_0, R_1, \ldots \}$ generates
  $\mathcal{D}(A)$ and consists of pairwise disjoint sets such that
  $D_i$ is noncomputable and $R_i$ is computable for all $i\in\omega$.
  Suppose that there are both computable and noncomputable sets in
  $\mathcal{G}$.  We may assume all of these sets are infinite.  If
  $\mathcal{G}$ contains only finitely many $D_i$, the finite union of
  the $D_i$s together with $\{R_0, R_1, \ldots \}$ generates
  $\mathcal{D}(A)$.  If $\mathcal{G}$ contains infinitely many $D_i$s,
  then $\{\tilde{D}_0, \tilde{D}_1, \ldots\}$, where
  $\tilde{D}_i := D_i \sqcup R_i$, generates $\mathcal{D}(A)$. Note
  that $D_i$ being noncomputable implies that $\tilde{D}_i$ is
  noncomputable.  \end{proof}

\begin{lemma}\label{sec:computable2}
  If $\{ R_0, R_1, \ldots \}$ and $\{ D_0, D_1, \ldots \}$ are
  pairwise disjoint generating sets for $\mathcal{D}(A)$ and all sets
  in $\{ R_0, R_1, \ldots \}$ are computable, then all sets in
  $\{ D_0, D_1, \ldots \}$ are computable.
\end{lemma}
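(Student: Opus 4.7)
The plan is to fix an arbitrary $D_n$ from the second generating set (which we may take to be infinite, since finite sets are trivially computable) and show that $D_n$ is co-c.e.; combined with its hypothesized c.e.-ness, this yields computability. The key idea is to sandwich $D_n$ between a computable set sitting just above it and a c.e. set that approximates its complement within that computable set.

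First I would use the fact that $\{R_i\}$ generates $\mathcal{D}(A)$ and that $D_n \in \mathcal{D}(A)$ to produce a finite index set $F$ with $D_n \subseteq^* R$, where $R := \bigsqcup_{i \in F} R_i$. Since $R$ is a finite union of computable sets, $R$ is itself computable. Then, applying the hypothesis that $\{D_j\}$ also generates $\mathcal{D}(A)$ to the set $R \in \mathcal{D}(A)$, I would obtain a finite $G$ with $R \subseteq^* \bigsqcup_{j \in G} D_j$. By pairwise disjointness of the $D_j$'s, if $n \notin G$ then the infinite set $D_n \subseteq^* R$ would be almost contained in a c.e. set disjoint from it, which is impossible. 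Hence $n \in G$.

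Now I would set $U := \bigsqcup_{j \in G \setminus \{n\}} D_j$. Pairwise disjointness gives $U \cap D_n = \emptyset$, and combining this with $R \subseteq^* D_n \sqcup U$ yields $R \setminus D_n =^* R \cap U$. Since $R \cap U$ is the intersection of a computable set and a c.e. set, it is c.e., and therefore $R \setminus D_n$ is c.e. Because $D_n \subseteq^* R$, we have $D_n =^* R \setminus (R \setminus D_n) = R \cap \overline{R \setminus D_n}$, which exhibits $D_n$ (up to a finite difference) as the intersection of a computable set with a co-c.e. set, hence as co-c.e. Since $D_n$ is also c.e., it is computable.

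The main obstacle here is not really conceptual but one of bookkeeping: one has to carry the finite symmetric differences through each $\subseteq^*$ and $=^*$ and then invoke the fact that being computable (or being co-c.e.) is preserved under finite modification to pass from $D_n =^* R \cap \overline{R \setminus D_n}$ to the genuine computability of $D_n$. Once the set-theoretic identities are pinned down, the argument is forced, and no delicate combinatorics are required.
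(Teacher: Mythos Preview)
Your proof is correct and follows essentially the same approach as the paper: both arguments sandwich $D_n$ via $D_n \subseteq^* \bigsqcup_{j\in F} R_j \subseteq^* \bigsqcup_{j\in G} D_j$, use pairwise disjointness to force $n\in G$, and then observe that $\bigsqcup_{j\in F} R_j - D_n =^* \bigsqcup_{j\in F} R_j \cap \bigsqcup_{j\in G\setminus\{n\}} D_j$ is c.e., whence $D_n$ is computable. You spell out the final co-c.e.\ step and the infinite/finite dichotomy for $D_n$ a bit more explicitly than the paper does, but the argument is the same.
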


\begin{proof}
  By definition of generating sets, there is a finite
  $F\subset \omega$ such that
  $D_i \subseteq^* \bigsqcup_{j \in F} R_j$.  It suffices to show that
  $\bigsqcup_{j \in F} R_j - D_i$ is a c.e.\ set.  There is a finite
  $H\subset\omega$ such that
  $\bigsqcup_{j \in F} R_j \subseteq^* \bigsqcup_{j \in H} D_j
  $.
  Since the $D_i$ are pairwise disjoint, $i \in H$. Set
  $\tilde{H}:= H - \{i \}$. Then
  $(\bigsqcup_{j \in F} R_j -D_i) \subseteq^* \bigsqcup_{j \in
    \tilde{H}} D_j $.
  Since $D_i$ and $ \bigsqcup_{j \in \tilde{H}} D_j $ are disjoint,
  \begin{equation*}\bigsqcup_{j \in
      F} R_j - D_i =^* \bigsqcup_{j \in F} R_j \cap \bigsqcup_{j \in
      \tilde{H}} D_j, \text{ which is c.e.}
  \end{equation*}
\end{proof}

We may assume that we have a generating set for $\mathcal{D}(A)$ whose
union is $\bar{A}$.

\begin{lemma}\label{sec:all}
  If $\{D_0, D_1, \ldots \}\cup\mathcal{G}$ generates $\mathcal{D}(A)$
  (and $\{D_0, D_1, \ldots \}$ is a collection of pairwise disjoint
  sets), then there exists a (pairwise disjoint) collection of sets
  $\{\tilde{D}_0, \tilde{D}_1, \ldots \}$ such that
  $\overline{A}= \bigsqcup_{i \in \omega} \tilde{D}_i$ and
  $\{\tilde{D}_0, \tilde{D}_1, \ldots \}\cup\mathcal{G}$ generates
  $\mathcal{D}(A)$.  \end{lemma}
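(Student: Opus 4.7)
The plan is to simply augment the given collection by singletons. The critical observation is that for any $y \in \overline{A}$, the singleton $\{y\}$ is a c.e.\ set disjoint from $A$, so it is a perfectly valid member of a generating family even though $\overline{A}$ itself cannot in general be enumerated. The family $\{\tilde{D}_i\}$ is not required to be uniformly c.e.\ (nothing in the definition of generating set demands this), so we are free to use a non-effective enumeration.

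Concretely, I would let $Y := \overline{A} \setminus \bigsqcup_j D_j$, fix any enumeration $Y = \{y_0, y_1, \ldots\}$ (finite or infinite), and define $\{\tilde{D}_i\}_{i \in \omega}$ by interleaving the original sets $D_0, D_1, \ldots$ with the singletons $\{y_0\}, \{y_1\}, \ldots$ via any bijection of $\omega$ with $\omega \sqcup Y$ (padding with $\emptyset$ if convenient). Each $\tilde{D}_i$ is individually c.e.\ because the $D_j$'s are c.e.\ by hypothesis and finite sets are always c.e.

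Then I would check the four requirements. Pairwise disjointness breaks into three cases: $D_j \cap D_{j'} = \emptyset$ by hypothesis, $\{y_k\} \cap \{y_{k'}\} = \emptyset$ when $k \neq k'$, and $D_j \cap \{y_k\} = \emptyset$ because $y_k$ was chosen outside $\bigsqcup_j D_j$. The disjoint union $\bigsqcup_i \tilde{D}_i = \overline{A}$ holds by construction, since every element of $\overline{A}$ is either in some $D_j$ or in $Y$. Each $\tilde{D}_i$ is disjoint from $A$: the $D_j$'s by hypothesis, the $\{y_k\}$'s because $y_k \in \overline{A}$. Finally, $\{\tilde{D}_i\} \cup \mathcal{G}$ generates $\mathcal{D}(A)$ because it contains $\{D_j\} \cup \mathcal{G}$ as a sub-collection, and any superset of a generating set is a generating set (a finite cover with respect to a smaller family is still a finite cover with respect to a larger one).

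There is no serious obstacle. The only potential pitfall is confusing the required individual c.e.-ness of each $\tilde{D}_i$ with uniform c.e.-ness of the sequence, or thinking one must enumerate the co-c.e.\ set $\overline{A} \setminus \bigsqcup_j D_j$ directly rather than merely splitting it into its (trivially c.e.) singletons.
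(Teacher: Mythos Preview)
Your proposal is correct and follows essentially the same idea as the paper's proof: distribute the leftover elements of $X := \overline{A} \setminus \bigcup_i D_i$ as singletons into the generating family. The paper's version is marginally tidier --- it sets $\tilde{D}_i := D_i \sqcup \{x_i\}$ where $X = \{x_0 < x_1 < \ldots\}$, absorbing one leftover element into each existing $D_i$ rather than interleaving separate singleton sets --- but the content is identical.
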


\begin{proof}
  If $X = \overline{A} - \bigsqcup_{i \in \omega} D_i$ and
  $X = \{ x_0 < x_ 1< \ldots\}$, we can take
  $\tilde{D}_i:=D_i \sqcup \{x_i\}$.
\end{proof}

We can also simplify partial generating sets that are not pairwise
disjoint.

\begin{lemma}\label{sec:nested}
  Let $\{ D_0, D_1, \ldots \}$ be a list of noncomputable c.e.\ sets
  whose union with $\mathcal{G}$ generates $\mathcal{D}(A)$.  Then,
  there is a collection of noncomputable c.e.\ sets
  $\{ \tilde{D}_0, \tilde{D}_1, \ldots \}$ whose union with
  $\mathcal{G}$ generates $\mathcal{D}(A)$ such that all the sets are
  either pairwise disjoint or nested so that
  $\tilde{D}_{n+1}-\tilde{D}_n$ is not c.e.\ for all
  $n\in\omega$. \end{lemma}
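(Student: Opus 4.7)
Our plan is to build the $\tilde{D}_n$ inductively, at each stage choosing between ``splitting off a disjoint piece'' of the next $D_j$ and ``taking a union'' with $\tilde{D}_n$, and to argue that one of these two modes can be sustained globally for the given list.

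We set $\tilde{D}_0 := D_0$, which is noncomputable by hypothesis. Inductively, suppose $\tilde{D}_0, \ldots, \tilde{D}_n$ have been constructed so far either pairwise disjoint or nested with non-c.e.\ gaps, and together with $\mathcal{G}$ they cover $D_0, \ldots, D_{m(n)}$. In \emph{disjoint mode}, we seek the least $j > m(n)$ such that the residue $D_j \setminus \bigsqcup_{i \leq n} \tilde{D}_i$ is both c.e.\ and noncomputable, and take this residue as $\tilde{D}_{n+1}$. Intermediate $D_k$ with finite residue is already covered, while $D_k$ with c.e.\ computable residue can be reshuffled via Lemma~\ref{sec:computable}. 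In \emph{nested mode}, we instead set $\tilde{D}_{n+1} := \tilde{D}_n \cup D_{j_{n+1}}$ for the least $j_{n+1}$ making $D_{j_{n+1}} \setminus \tilde{D}_n$ not c.e.; this forces the gap $\tilde{D}_{n+1} - \tilde{D}_n$ to be non-c.e., while $\tilde{D}_{n+1} \supseteq D_0$ remains noncomputable.

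The commitment to a single mode is forced by a dichotomy on $\{D_i\}$: either disjoint mode extends indefinitely, or at some stage no remaining $D_j$ has a c.e.\ noncomputable residue with respect to the current $\bigsqcup \tilde{D}_i$. In the latter case, infinitely many later $D_j$ must have non-c.e.\ residues---otherwise $\{D_j\} \cup \mathcal{G}$ would reduce, modulo finitely many sets, to a finite generating set handled by Lemma~\ref{sec:finitegen}---so the nested construction then succeeds on the tail, after merging any previously-built disjoint $\tilde{D}_i$'s into a single noncomputable c.e.\ set which we relabel as the start of the nested chain.

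The main obstacle will be verifying rigorously that skipped $D_k$'s remain covered by the final list and that the mode commitment is globally consistent. The delicate case is a c.e.\ but computable residue in disjoint mode: such a residue cannot be added as a new noncomputable $\tilde{D}_i$, so it must be folded into either $\mathcal{G}$ or an existing $\tilde{D}_i$ (using Lemma~\ref{sec:computable} to rebalance the computable pieces) without disturbing pairwise disjointness. A careful application of Lemmas~\ref{sec:finitegen}--\ref{sec:all} should let us do this without breaking the eventual pairwise disjoint or nested structure of $\{\tilde{D}_n\}$, and ensures that each noncomputable $D_j$ appearing in the original list remains covered by the finite union of the $\tilde{D}_i$'s and $\mathcal{G}$.
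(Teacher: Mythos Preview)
Your disjoint mode has a genuine coverage gap. You seek the least $j>m(n)$ whose residue $D_j\setminus\bigsqcup_{i\le n}\tilde D_i$ is c.e.\ and noncomputable, and you dispose of intermediate $D_k$ whose residue is finite or c.e.\ computable. But you never handle the intermediate $D_k$ whose residue is \emph{not c.e.}: nothing prevents such a $D_k$ from sitting strictly between $m(n)$ and $j$, it is not absorbed into $\tilde D_{n+1}$, and there is no argument that its residue ever becomes c.e.\ at a later stage. Since every $D_k$ must be covered by finitely many $\tilde D_i$'s together with $\mathcal G$, this breaks generation. The same defect reappears in your nested mode: taking $\tilde D_{n+1}=\tilde D_n\cup D_{j_{n+1}}$ for one chosen index $j_{n+1}$ does not absorb the skipped $D_k$'s with c.e.\ residue, and you give no reason their residues are eventually swallowed by the growing chain.

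The paper sidesteps this entirely by never isolating a single $D_j$: its building blocks are the initial-segment unions $\bigcup_{i\le l}D_i$. One fixes a starting parameter $k$, sets $\tilde D_0=\bigcup_{i\le k}D_i$, and at each step looks for the least $l>l_k(n)$ with $\bigcup_{i\le l}D_i-\bigcup_{i\le l_k(n)}D_i$ c.e.; that difference becomes $\tilde D_{n+1}$. Coverage of every $D_i$ is automatic because $\bigsqcup_{j\le n}\tilde D_j=\bigcup_{i\le l_k(n)}D_i$. The dichotomy is obtained by ranging over \emph{all} starting points $k$: either some $k$ yields an infinite disjoint sequence, or for every $k$ the process halts at a value $m(k)$, and then for all $l>m(k)$ the difference $\bigcup_{i\le l}D_i-\bigcup_{i\le m(k)}D_i$ is non-c.e., which is exactly what is needed to iterate $m$ and produce the nested chain. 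Your single attempt from $k=0$ followed by a mode switch does not give this global failure information, so your claim that nested mode must then succeed indefinitely is unsupported.
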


\begin{proof}
  In a highly noneffective way, we build a list
  $\{ \tilde{D}_0, \tilde{D}_1, \ldots \}$, satisfying our
  conclusion. To ensure that this list partially generates
  $\mathcal{D}(A)$ as described, we construct this list so that each
  $\tilde{D}_i$ is disjoint from $A$ and every $D_i$ is contained in
  the union of finitely many $\tilde{D}_i$'s.

  We attempt to inductively construct the list to consist of pairwise
  disjoint sets based on an arbitrary starting point $k\in \omega$.
  For each $k\in\omega$, we inductively define a function
  $l_k:\omega\to\omega$.  We set $l_k(0):=k$ and
  $\tilde{D}_0 = \bigcup_{i \leq l_k(0)} {D}_i$. We let $l_k(n+1)$ be
  the least number (if it exists) such that
  $\bigcup_{i \leq l_k(n+1) } D_i -\bigcup_{i\leq l_k(n)} D_i$ is a
  c.e.\ set.  Let $\tilde{D}_{n+1}$ be this c.e.\ set. Then,
  $\bigsqcup_{i \leq n} \tilde{D}_i = \bigcup_{i \leq l_k(n)} {D}_i$.
  If for some initial choice of $k$, the function $l_k$ has domain
  $\omega$, then the sets in $\{ \tilde{D}_0, \tilde{D}_1, \ldots \}$
  are pairwise disjoint.

  Otherwise, the above procedure fails for all initial choices of
  $k$. Then, each $l_k$ is a strictly increasing function defined on
  some nonempty finite initial segment of $\omega$. Let
  $m:\omega\to\omega$ be defined so that $m(k)$ is the maximum value
  of $l_k$. For all $k$, $m(k) \geq k$. Moreover, for all $k$ and
  $l>m(k)$, $\bigcup_{ i \leq l } D_i -\bigcup_{i \leq m(k)} D_i$ is
  never a c.e.\ set. We define a strictly increasing function
  $\tilde{m}:\omega\to\omega$ inductively by setting
  $\tilde{m}(0)= m(0)$ and $\tilde{m}(n+1) = m(\tilde{m}(n)+1)$. By
  construction, the list given by
  $\tilde{D}_{n} = \bigcup_{ i \leq \tilde{m}(n) } D_i$ has the
  desired nesting property. \end{proof}

\subsection{Standardized Types of generating
  sets}\label{SS:TypesgensetsThm}

We use the results from \S\ref{SS:simplegensets} to show
that any c.e.\ set $A$ has a generating set for $\mathcal{D}(A)$ of
one of ten standardized types.  We can then classify c.e.\ sets by the
complexity of their generating sets (see
Definition~\ref{sec:typedefn}).

\begin{theorem}\label{sec:nice-generating-sets-1}
  For any c.e.\ set $A$, there exists a collection of c.e.\ sets,
  $\mathcal{G}$, generating $\mathcal{D}(A)$ of one of the following
  types.

  \begin{description}
  \item[Type 1] $\mathcal{G} = \{ \emptyset \}$.
  \item[Type 2] $\mathcal{G} = \{ R \}$, where $R$ is an infinite
    computable set.
  \item[Type 3] $\mathcal{G} = \{ W \}$, where $W$ is an infinite
    noncomputable set.
  \item[Type 4] $\mathcal{G} = \{ R_0, R_1, \ldots \}$, where the
    $R_i$ are infinite pairwise disjoint computable sets.
  \item[Type 5] $\mathcal{G} = \{ D_0, R_0, R_1, \ldots \}$, where
    $D_0$ is the only noncomputable set and all the sets are infinite
    and pairwise disjoint. 
  \item[Type 6] $\mathcal{G} = \{ D_0, D_1, \ldots \}$, where the
    $D_i$ are infinite pairwise disjoint noncomputable sets.
  \item[Type 7] $\mathcal{G} = \{ D_0 , R_0, R_1, \ldots \}$, where
    $D_0$ is the only noncomputable set, the $R_i$ are infinite
    pairwise disjoint computable sets, and
    $D_0 \cap R_i \neq \emptyset$ for infinitely many $i$. 
  \item[Type 8]
    $\mathcal{G} = \{ D_0, D_1, \ldots , R_0, R_1, \ldots \}$, where
    the $D_i$ are pairwise disjoint noncomputable sets and the $R_i$
    are infinite pairwise disjoint computable sets.
  \item[Type 9]
    $\mathcal{G} = \{ D_0, D_1, \ldots , R_0, R_1, \ldots \}$, where
    the $R_i$ are infinite pairwise disjoint computable sets and the
    $D_i$ are infinite nested noncomputable sets such that, for all
    $l\in\omega$, $D_{l+1} - D_l$ is not c.e.\ and there are
    infinitely many $j$ such that $R_j - D_l$ is infinite.
  \item[Type 10] $\mathcal{G} = \{ D_0, D_1, \ldots \}$, where the
    $D_i$ are infinite nested noncomputable sets such that
    $D_{l+1} - D_l$ is not c.e.\ for all $l\in\omega$.
  \end{description}
\end{theorem}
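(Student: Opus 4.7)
My plan is to proceed by case analysis on the complexity of generating sets for $\mathcal{D}(A)$, repeatedly applying the simplification lemmas of \S\ref{SS:simplegensets} to reduce any starting generating set (e.g., the trivial one guaranteed by Lemma \ref{sec:gener-sets-simple}(i)) to one of the ten listed forms. The first dichotomy is whether $\mathcal{D}(A)$ has a finite generating set: if so, Lemma \ref{sec:finitegen} directly places us in Type 1, 2, or 3.

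If every generating set is infinite, I partition a chosen generating set into its computable members $\{R_i\}$ and its noncomputable members $\{D_i\}$, then apply Lemma \ref{sec:computable} to render the $R_i$'s pairwise disjoint and Lemma \ref{sec:nested} to render the $D_i$'s either pairwise disjoint or strictly nested with non-c.e.\ differences. The remaining cases then fall out. Having no noncomputables left yields Type 4; having no computables left yields Type 6 (pairwise disjoint) or Type 10 (nested). When finitely many pairwise-disjoint noncomputables coexist with infinitely many computables, I unite them into a single $D_0$ and distinguish Type 7 (when $D_0$ meets infinitely many $R_i$) from Type 5 (when only finitely many $R_i$ meet $D_0$, which are then absorbed into $D_0$ so that the remaining family is fully pairwise disjoint). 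When infinitely many pairwise-disjoint noncomputables coexist with computables, I merge any finite computable collection into one infinite computable set and re-split it into an infinite pairwise-disjoint family of infinite computable sets, giving Type 8 (or collapsing to Type 6 if the merged computable set happens to be $\subseteq^*$ a finite union of the $D_i$'s). Nested noncomputables paired with computables give Type 9 after cleanup.

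The main obstacle is the Type 9 case, which demands that for every $l$ there are infinitely many $j$ with $R_j - D_l$ infinite. I handle this by first dropping any $R_j$ that is $\subseteq^* D_l$ for the least problematic $l$ (such $R_j$'s are redundant in the generating set because $D_l$ already covers them). After the drop, the remaining computable material is either empty (yielding Type 10) or comprises finitely many computables that I merge into a single computable $R$. If $R$ is itself $\subseteq^*$ some $D_{l'}$, I discard $R$ entirely and fall into Type 10; otherwise $R - D_l$ is infinite for every $l$, and I split $R$ into an infinite pairwise-disjoint family of infinite computable pieces $R_j^*$ whose interaction with the descending co-c.e.\ sequence $\overline{D_l}$ is controlled so that for every $l$ infinitely many $R_j^*$'s meet $R - D_l$ infinitely often. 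This splitting is the delicate step: the partition need not be uniformly effective but each individual piece must still be computable, which I arrange by a priority-style interleaving of the enumeration of $R$ against the (external) enumerations of $\overline{D_l}$ for each $l$, ensuring that each $l$ forces infinitely many fresh pieces to receive infinitely many witnesses from $R - D_l$.
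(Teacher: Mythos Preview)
Your case analysis for Types 1--8 and Type 10 tracks the paper's argument closely and is correct. The divergence---and the gap---is in the Type 9/10 boundary, specifically the ``delicate splitting'' of a single residual computable $R$ into infinitely many pieces $R_j^*$ with the Type 9 property.

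That splitting is not obviously achievable, and your sketch does not establish it. You refer to ``enumerations of $\overline{D_l}$,'' but $\overline{D_l}$ is co-c.e., not c.e.; you can only enumerate $D_l$ and watch candidates disappear. More substantively, the requirement is that for each $l$ infinitely many computable pieces $R_j^*$ satisfy $|R_j^* - D_l| = \infty$. If, say, $D_0 \cap R$ were maximal inside $R$, then for any computable $R_j^* \subseteq R$ either $R_j^* \subseteq^* D_0$ or $R - D_0 \subseteq^* R_j^*$, so at most one piece could meet $R-D_0$ infinitely. Your nesting hypothesis (each $D_{l+1}-D_l$ non-c.e.) rules out literal maximality of $D_0$, but it is not clear it rules out analogous obstructions to the partition you need, and your priority sketch does not address this. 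Since you provide no fallback when the splitting fails, this is a genuine gap.

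The paper avoids the issue entirely. Rather than trying to manufacture a Type 9 generating set by re-splitting $R$, it simply tests whether the current nested-plus-computable family already satisfies the Type 9 clause. If not, the failure at some $l$ means cofinitely many $R_j$ are $\subseteq^* D_l$ and hence redundant; the finitely many survivors (your $R$) are absorbed into the chain by setting $\tilde D_i = D_i \cup R$. The $\tilde D_i$ are nested, only finitely many can be computable (else the computable ones alone would generate, contradicting the failure of the pairwise-disjoint case), and after discarding those and reapplying Lemma~\ref{sec:nested} one lands in Type 10. This absorption move is both simpler and clearly correct; you should replace your splitting argument with it.
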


\begin{proof}
  (1) If there is a finite generating set for $\mathcal{D}(A)$, then
  there is a generating set of Type 1, 2 or 3 for $\mathcal{D}(A)$ by
  Lemma~\ref{sec:finitegen}.

  (2) If $\mathcal{D}(A)$ has an infinite generating set consisting of
  pairwise disjoint sets, then $\mathcal{D}(A)$ has a generating set
  of Type 4, 5, or 6 by Lemma~\ref{sec:disjoint}.  We remark that, by
  Lemma \ref{sec:computable2}, if $\mathcal{D}(A)$ has a generating
  set of Type 4, then $\mathcal{D}(A)$ does not have a generating set
  of Type 5 or Type 6. Note that if $\mathcal{D}(A)$ has an infinite
  generating set consisting only of computable sets, we can assume
  these computable sets are pairwise disjoint by
  Lemma~\ref{sec:computable} and hence (2) holds.

  Assume the antecedents of (1) and (2) fail, and take some generating set for $\mathcal{D}(A)$.  By repeatedly taking finite unions of some
  of the sets, we can assume that there are zero,
  one, or infinitely  many computable sets  in this generating set.  If there is one computable
  set $R$, then we can assume that all other sets are noncomputable and
  disjoint from $R$ (by  removing $R$ from each of those sets). 
  We can then take one of the noncomputable sets, $W$, and replace it
  with $R \sqcup W$.  So, we can assume that the generating set has either no computable
  sets or infinitely many pairwise disjoint computable sets (again by   Lemma~\ref{sec:computable}).  Similarly, by
  taking finite unions, we can assume the generating set also has  zero, one, or infinitely many
  noncomputable sets. By the failure of the antecedent of (2)  and Lemma~\ref{sec:computable},
  having zero is not a possibility.

  (3) If there is one noncomputable set, then
  $\mathcal{D}(A)$ has a generating set of Type 7 since  the antecedents of (1) and (2) fail.   Specifically, there must be
  infinitely many disjoint computable sets in the generating set.  If only finitely many of these computable sets intersected with the one noncomputable set $W$, we could replace $W$ by its union with these finitely many  sets to obtain a Type 5 generating set, a contradiction.  

  If the antecedent of (3) fails, the generating set contains infinitely many noncomputable sets
  $\{D_0, D_1, \ldots \}$, and, by Lemma~\ref{sec:nested},
  these noncomputable sets can be taken to be either pairwise disjoint
  or nested so that $D_n\subset D_{n+1}$ and ${D}_{n+1}-{D}_n$ is not
  c.e.\ for all $n\in\omega$. If the generating set contains infinitely  many computable sets and the infinitely
  many noncomputable sets are pairwise disjoint, this generating set is of Type 8.

  (4) If $\mathcal{D}(A)$ has a generating set 
  of Type 8 or Type 9, we are done.    
  
  Now we will argue that the failure of all the antecedents of (1) though (5)
  implies  that $\mathcal{D}(A)$ has a generating set of Type 10.  Notice
  that if no computable sets remain in our generating set, we are done.  So, assume
  otherwise. Hence, the noncomputable sets are nested, and almost all the
  computable sets are almost contained in one of the noncomputable
  sets.  For each noncomputable set $D_i$ in this generating set, we
  can take the union of $D_i$ and the remaining finitely many
  computable sets to obtain a new generating set
  $\{\tilde{D}_0, \tilde{D}_1, \ldots \}$ where the $\tilde{D}_i$ are
  infinite nested sets.  This generating set cannot contain infinitely many computable
  $\tilde{D}_i$ since the collection of the computable $\tilde{D}_i$
  would be a generating set consisting of only computable sets by the note after (2).  Hence, there are only
  finitely many computable $\tilde{D}_i$.  The collection of
  noncomputable $\tilde{D}_i$ also generates $\mathcal{D}(A)$ since
  the $\tilde{D}_i$ are nested.  If this generating set is not of Type
  10, we can apply Lemma \ref{sec:nested} to obtain one of Type 10
  since the antecedent of (2) fails.
\end{proof}

Note that $\mathcal{D}(A)$ may have generating sets of different
Types.
However, the Types are listed in order of increasing complexity.  By
following the procedure outlined in the proof of Theorem
\ref{sec:nice-generating-sets-1}, we will always find a generating set
for $\mathcal{D}(A)$ of lowest possible complexity.  Hence, we can
classify the c.e.\ sets by the Type complexity of their generating
set.  

\begin{definition}\label{sec:typedefn}
  We say the c.e.\ set ${A}$ is {\em Type $n$} if there is a
  generating set for $\mathcal{D}(A)$ of Type $n$ but no generating
  set for $\mathcal{D}(A)$ of Type $m$ for all $m < n$.
\end{definition}
\noindent
In Theorem \ref{sec:main-result-1}, the main result of this paper, we show there is a  $\mathcal{D}$-maximal set of each Type.

We more closely examine sets of a given Type in \S
\ref{SS:underTypes}, but it is helpful to first observe the behavior
of generating sets under splitting.

\subsection{Splits and generating sets for $\mathcal{D}(A)$}

\begin{lemma}\label{sec:splits2}
  Suppose $R$ is computable, $A_0 \sqcup R = A$, and
  $\mathcal{G} \subseteq \mathcal{D}(A)$. Then $\mathcal{G}$ generates
  $\mathcal{D}(A)$ iff $\mathcal{G} \cup \{R\}$ generates
  $\mathcal{D}(A_0)$. \end{lemma}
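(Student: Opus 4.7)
The plan is to verify both implications directly from the definition of a generating set, using two elementary observations: since $R$ is computable, for any c.e.\ set $D$ the set $D - R = D \cap \overline{R}$ is c.e.; and since $A = A_0 \sqcup R$ is a disjoint union, any set disjoint from $A$ is simultaneously disjoint from $A_0$ and from $R$.

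For the forward direction, assume $\mathcal{G}$ generates $\mathcal{D}(A)$. I would first check $\mathcal{G} \cup \{R\} \subseteq \mathcal{D}(A_0)$: each set in $\mathcal{G}$ is disjoint from $A \supseteq A_0$, and $R$ is disjoint from $A_0$ by the splitting hypothesis. Then, given an arbitrary c.e.\ $D$ disjoint from $A_0$, I would pass to $D - R$, which is c.e.\ by the first observation and disjoint from $A_0 \sqcup R = A$. Applying the generation property of $\mathcal{G}$ to $D - R$ yields a finite $F$ with $D - R \subseteq^* \bigcup_{j \in F} D_j$, and therefore $D \subseteq^* R \cup \bigcup_{j \in F} D_j$, exhibiting a cover from $\mathcal{G} \cup \{R\}$.

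For the reverse direction, assume $\mathcal{G} \cup \{R\}$ generates $\mathcal{D}(A_0)$. The containment $\mathcal{G} \subseteq \mathcal{D}(A)$ is given. Take any c.e.\ $D$ disjoint from $A$; then $D$ is disjoint from $A_0$, so some finite $F$ gives $D \subseteq^* R \cup \bigcup_{j \in F} D_j$. By the second observation $D \cap R = \emptyset$, so the $R$ term is superfluous and $D \subseteq^* \bigcup_{j \in F} D_j$, as needed.

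There is no real obstacle here; the whole lemma is bookkeeping from the definition once one notes that computability of $R$ lets us both extract $D - R$ as a c.e.\ witness in $\mathcal{D}(A)$ and discard $R$ from any cover of a set already disjoint from $A$. I would write the proof as two short paragraphs, one per direction, with no separate lemmas invoked.
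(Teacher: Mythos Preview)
Your proof is correct and follows the same approach as the paper: for the forward direction you both pass from a c.e.\ $D$ disjoint from $A_0$ to $D - R = D \cap \overline{R}$, observe this is c.e.\ (since $R$ is computable) and disjoint from $A$, cover it from $\mathcal{G}$, and then add $R$ back. The paper's proof is terser---it writes out only this forward direction in a single sentence and leaves the reverse direction implicit---while you spell out both directions and the disjointness checks; the substance is identical.
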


\begin{proof}
  Let $D$ be c.e.\ and disjoint from $A_0$. Since
  $D-A = D \cap \overline{R}$ is c.e.\ and disjoint from $A$, the set
  $D$ is covered by $R$ and finitely many sets in
  $\mathcal{G}$. \end{proof}

\begin{corollary}\label{sec:splits2cor}
  A set $A$ is half of a trivial splitting of a simple set iff
  $\mathcal{D}(A) = \gen{R}$ for some computable $R$. \end{corollary}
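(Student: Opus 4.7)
The plan is to deduce this corollary essentially immediately from Lemma~\ref{sec:splits2} by specializing to $\mathcal{G} = \emptyset$, combined with Lemma~\ref{sec:gener-sets-simple}(iii), which characterizes simple sets via $\mathcal{D}(S) = \gen{\emptyset}$.

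For the forward direction, I would assume that $A$ is half of a trivial splitting of a simple set $S$, and without loss of generality write $S = A \sqcup R$ with $R$ computable (if instead the computable half is $A$ itself, then $A$ is a computable splitting of a simple set, which I would dispose of as a degenerate case by taking $R = \emptyset$, noting that $\mathcal{D}(A) = \gen{\emptyset} = \gen{R}$ follows from $A$ being simple). Then Lemma~\ref{sec:gener-sets-simple}(iii) gives $\mathcal{D}(S) = \gen{\emptyset}$, so applying Lemma~\ref{sec:splits2} with $A_0 = A$, this $R$, and $\mathcal{G} = \emptyset$ (which is trivially contained in $\mathcal{D}(S)$) yields that $\{R\}$ generates $\mathcal{D}(A)$, as desired.

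For the reverse direction, suppose $\mathcal{D}(A) = \gen{R}$ for some computable $R$. Since $R \in \mathcal{D}(A)$, the set $R$ is c.e.\ and disjoint from $A$, so we may form $S := A \sqcup R$. To show that $S$ is simple, I would apply Lemma~\ref{sec:splits2} in the other direction with $\mathcal{G} = \emptyset$: the hypothesis $\mathcal{D}(A) = \gen{R} = \gen{\emptyset \cup \{R\}}$ yields $\mathcal{D}(S) = \gen{\emptyset}$, and then Lemma~\ref{sec:gener-sets-simple}(iii) gives that $S$ is simple. Since $R$ is computable, the splitting $S = A \sqcup R$ is trivial by definition, completing the proof.

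I do not expect any real obstacle: the work has already been done in Lemma~\ref{sec:splits2}, and this corollary is just the $\mathcal{G} = \emptyset$ instance packaged together with the equivalence between simplicity and having an empty generating set. The only small care needed is ensuring $R$ is genuinely the disjoint c.e.\ witness for the splitting in the reverse direction, which is immediate from $R \in \mathcal{D}(A)$, and handling the degenerate case where $R$ is finite (equivalently, $A$ itself is simple) uniformly.
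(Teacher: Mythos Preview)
Your approach is exactly what the paper intends: the corollary is stated without proof immediately after Lemma~\ref{sec:splits2}, and the intended argument is precisely to specialize that lemma to $\mathcal{G}=\emptyset$ and combine with Lemma~\ref{sec:gener-sets-simple}(iii). Your main forward and reverse arguments are correct.

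There is one slip in your handling of the degenerate case. If $A$ itself is the computable half of the splitting, then $A$ is \emph{not} simple: a computable infinite coinfinite set has $\overline{A}$ as an infinite c.e.\ subset of its complement, so $\mathcal{D}(A)\neq\gen{\emptyset}$ and your proposed choice $R=\emptyset$ does not work. The fix is immediate, though: in that case every c.e.\ set disjoint from $A$ is contained in the computable set $\overline{A}$, so $\mathcal{D}(A)=\gen{\overline{A}}$ and the conclusion holds with $R=\overline{A}$. (In the paper's actual uses of the corollary $A$ is the noncomputable half, so this case does not arise.)
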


\begin{lemma} Suppose $A_0 \sqcup A_1=A$.  If
  $\mathcal{G} \cup \{A_1\}$ generates $\mathcal{D}(A_0)$, then
  $\mathcal{G} $ generates $\mathcal{D}(A)$.
\end{lemma}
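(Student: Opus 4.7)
The plan is to verify the two clauses in the definition of ``$\mathcal{G}$ generates $\mathcal{D}(A)$'': (a) each $D \in \mathcal{G}$ is disjoint from $A$, and (b) every c.e.\ set disjoint from $A$ is almost-contained in a finite union of elements of $\mathcal{G}$. Clause (a) follows from the hypothesis together with the implicit reading $\mathcal{G} \subseteq \mathcal{D}(A)$, paralleling the explicit hypothesis in Lemma~\ref{sec:splits2}: each $D \in \mathcal{G}$ is disjoint from $A_0$ by the assumption that $\mathcal{G} \cup \{A_1\}$ generates $\mathcal{D}(A_0)$, and one takes the $D$'s to also avoid $A_1$ so that they lie in $\mathcal{D}(A) = \mathcal{D}(A_0 \sqcup A_1)$.

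The content is in clause (b). I would fix an arbitrary c.e.\ set $D$ with $D \cap A = \emptyset$. Since $A_0 \subseteq A$, also $D \cap A_0 = \emptyset$, so by the hypothesis there is a finite $F \subset \omega$ and sets $D_j \in \mathcal{G}$ ($j \in F$) with
\[ D \subseteq^* A_1 \cup \bigcup_{j \in F} D_j. \]
The key observation is that $D \cap A_1 = \emptyset$ (because $A_1 \subseteq A$), so the $A_1$-summand on the right contributes nothing to $D$. Concretely,
\[ D \setminus \bigcup_{j \in F} D_j \;=\; D \cap \overline{A_1} \cap \overline{\textstyle\bigcup_{j \in F} D_j} \;=\; D \setminus \Bigl(A_1 \cup \bigcup_{j \in F} D_j\Bigr), \]
which is finite by the displayed almost-inclusion. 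Hence $D \subseteq^* \bigcup_{j \in F} D_j$, giving clause (b).

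There is no substantive obstacle; the lemma is a direct unfolding of the definitions, serving as a partial converse to Lemma~\ref{sec:splits2} with a (possibly noncomputable) $A_1$ in place of the computable set $R$. The only delicate point is the implicit convention that $\mathcal{G} \subseteq \mathcal{D}(A)$: without it, the conclusion would be vacuous since $A_1$ itself would violate disjointness from $A$.
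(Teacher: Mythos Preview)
Your proof is correct and is precisely the natural argument; the paper in fact states this lemma without proof, treating it as immediate. You also rightly flag the implicit hypothesis $\mathcal{G} \subseteq \mathcal{D}(A)$: as literally stated the lemma fails (take $\mathcal{G} = \{A_1\}$), so the intended reading is indeed the one paralleling Lemma~\ref{sec:splits2}.
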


\begin{lemma}\label{sec:splits1} Suppose $\mathcal{G}$ generates
  $\mathcal{D}(A)$ and $A_0 \sqcup A_1=A$.  If $A_0 \sqcup A_1$ is a
  Friedberg splitting of $A$, then $\mathcal{G} \cup \{A_1\}$
  generates $\mathcal{D}(A_0)$. \end{lemma}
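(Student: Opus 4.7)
My plan is to show directly that any c.e.\ set $D$ disjoint from $A_0$ is covered (modulo finite) by $A_1$ together with finitely many sets from $\mathcal{G}$. First observe that every set in $\mathcal{G} \cup \{A_1\}$ is disjoint from $A_0$, so this collection is a subset of $\mathcal{D}(A_0)$; the content of the lemma is the covering property.

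Fix a c.e.\ set $D$ disjoint from $A_0$ and form the auxiliary c.e.\ set $W := D \cup A$. I would compute both differences: since $D \cap A_0 = \emptyset$ and $A - A_0 = A_1$, we have $W - A_0 = D \cup A_1$, which is manifestly c.e. On the other hand, $W - A = D - A$, and this is the set I want to show is c.e. The Friedberg splitting hypothesis applied contrapositively now does the work: since $W - A_0$ is c.e., $W - A$ must be c.e.\ as well. (If $W - A$ were not c.e., then neither $W - A_0$ nor $W - A_1$ would be c.e.)

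Once $D - A$ is known to be c.e., it lies in $\mathcal{D}(A)$, so by the hypothesis that $\mathcal{G}$ generates $\mathcal{D}(A)$ there is a finite set $F \subseteq \omega$ and sets $G_j \in \mathcal{G}$ (for $j \in F$) with $D - A \subseteq^* \bigcup_{j \in F} G_j$. Now decompose $D = (D \cap A) \sqcup (D - A)$; since $D$ is disjoint from $A_0$, we have $D \cap A \subseteq A_1$, and therefore
\[
  D \;\subseteq^*\; A_1 \;\cup\; \bigcup_{j \in F} G_j,
\]
which is exactly the statement that $\{A_1\} \cup \{G_j : j \in F\}$ covers $D$. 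Hence $\mathcal{G} \cup \{A_1\}$ generates $\mathcal{D}(A_0)$.

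There is no real obstacle here; the only nonroutine idea is choosing the test set $W = D \cup A$, which is designed so that $W - A_0$ is automatically c.e.\ (being $D \cup A_1$) while $W - A$ coincides with the set $D - A$ whose c.e.-ness we want to extract from the Friedberg splitting hypothesis. The rest is bookkeeping with the definition of ``generates.''
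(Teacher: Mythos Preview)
Your proof is correct and follows essentially the same approach as the paper: use the Friedberg hypothesis to conclude that $D - A$ is c.e., then cover $D$ by $A_1$ together with finitely many generators from $\mathcal{G}$. The only difference is that the paper applies the Friedberg property directly to $D$ (observing that $D - A_0 = D$ is already c.e.\ since $D$ is disjoint from $A_0$), so your auxiliary set $W = D \cup A$ is an unnecessary detour---but the logic is the same.
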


\begin{proof}
  Let $D$ be a c.e.\ set. If $D-A$ is not c.e., then $D-A_0$ is not
  c.e. and $D$ is not disjoint from $A_0$. So, assume that $D-A$ is a
  c.e.\ set. If $D$ is disjoint from $A_0$, then $D$ is covered by
  $A_1$ and finitely many sets in $\mathcal{G}$.
\end{proof}

\begin{corollary}\label{sec:splits1cor}
  If $A$ is half of a Friedberg splitting of a simple set, then
  $\mathcal{D}(A) = \gen{W}$ where $W$ is not
  computable. \end{corollary}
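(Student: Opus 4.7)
The plan is to apply Lemma \ref{sec:splits1} directly, with the simple set playing the role of $A$ and the empty collection playing the role of $\mathcal{G}$, and then separately verify that the other half of the splitting is noncomputable.

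Concretely, suppose $A$ is half of a Friedberg splitting $S = A \sqcup A_1$ where $S$ is simple. First I would recall from Lemma \ref{sec:gener-sets-simple}(iii) that $\mathcal{D}(S) = \gen{\emptyset}$. Then I would apply Lemma \ref{sec:splits1} to the splitting $S = A \sqcup A_1$ with $\mathcal{G} = \emptyset$: since $\emptyset$ generates $\mathcal{D}(S)$ and the splitting is Friedberg, $\{A_1\}$ generates $\mathcal{D}(A)$. Thus $\mathcal{D}(A) = \gen{A_1}$.

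It remains only to confirm that $W := A_1$ is noncomputable. This is where I would invoke the Friedberg property of the splitting, using the witness $W = \omega$. Since $S$ is simple, $\omega - S = \overline{S}$ is not c.e.\ (a simple set is coinfinite and noncomputable, so its complement is infinite and non-c.e.). The Friedberg property applied to $W = \omega$ then forces $\omega - A_1 = \overline{A_1}$ to fail to be c.e., which means $A_1$ is not computable. Combined with the previous paragraph, this gives $\mathcal{D}(A) = \gen{W}$ for the noncomputable c.e.\ set $W = A_1$, as desired.

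The argument is essentially a one-line deduction from Lemma \ref{sec:splits1}, so there is no real obstacle; the only thing one must be careful about is not to overlook that the noncomputability of $A_1$ requires a separate (though trivial) appeal to the Friedberg property via $W = \omega$.
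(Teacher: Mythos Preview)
Your proposal is correct and matches the paper's intended approach: the corollary is stated without proof, being an immediate consequence of Lemma~\ref{sec:splits1} applied with $\mathcal{G}=\emptyset$ (using Lemma~\ref{sec:gener-sets-simple}(iii) for the simple set), together with the observation that both halves of a Friedberg splitting of a noncomputable set are noncomputable. Your verification of the latter via $W=\omega$ is exactly the right justification.
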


Suppose that $A_0 \sqcup A_1=A$ is a nontrivial splitting that is not
Friedberg.  We would like a result describing a generating set for
$\mathcal{D}(A_0)$ similar to Lemmas \ref{sec:splits2} and
\ref{sec:splits1}, but such a result is not clear.  For the splitting
$A_0 \sqcup A_1=A$, there is a set $W$ such that $W-A$ is not c.e.\
but $W-A_0$ is a c.e.\ set. Since $W-A$ may not be contained in a
finite union of generators for $\mathcal{D}(A)$ (for example, if $A$
is simple), the set $A_1$ and the generators for $\mathcal{D}(A)$ may
not generate $\mathcal{D}(A_0)$.  Also,
the work in Section~\ref{sec:type-10-sets} shows that the converse of
Lemma \ref{sec:splits1} fails; for a splitting $A_0\sqcup A_1$, the
collection $\mathcal{G}\cup \{A_1\}$ generating $\mathcal{D}(A_0)$
does not mean that the splitting is Friedberg.

\subsection{Understanding the Types}\label{SS:underTypes}
Sets of Types 1, 2, and 3 are particularly well understood.  By
Lemma~\ref{sec:gener-sets-simple}, $S$ is simple iff $S$ is of Type 1;
there are no infinite c.e.\ sets disjoint from $S$. By this fact and
Lemma~\ref{sec:splits2}, $A \sqcup R$ is simple iff $A$ is Type 2. By
Lemma~\ref{sec:splits1}, if $A$ is half of a Friedberg splitting of a
simple set, then $A$ is of Type 3.  Moreover, sets of these Types are
definable.

\begin{lemma}\label{sec:nice-generating-sets}
  The statement ``$A$ is Type 1 (respectively 2, 3)'' is elementarily
  definable in $\mathcal{E}^*$ under inclusion.
\end{lemma}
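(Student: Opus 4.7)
The plan is to exhibit, for each $n \in \{1,2,3\}$, a first-order formula $\phi_n(A)$ in the language of $\mathcal{E}^*$ with inclusion that holds exactly when $A$ is Type~$n$. Three ingredients are on hand. First, by Lemma~\ref{sec:gener-sets-simple}(iii), Type~1 is exactly simplicity, which is obviously first-order. Second, computability is elementary in $\mathcal{E}^*$ since $R$ is computable iff it has a complement, namely $\exists V\,(R \cap V =^* \emptyset \wedge R \cup V =^* \omega)$. Third, ``$\{R\}$ generates $\mathcal{D}(A)$'' is a single first-order sentence, saying $R \cap A =^* \emptyset$ and $\forall D\,(D \cap A =^* \emptyset \to D \subseteq^* R)$; this is already implicit in Lemma~\ref{L:FinGenDef}. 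The symbols $\cap$ and $\cup$, as well as $=^* \emptyset$ and $=^* \omega$, are first-order in the pure inclusion language since $\mathcal{E}^*$ is a lattice with $\bot$ and $\top$.

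Given these ingredients, define $\phi_1(A)$ to be $\forall W\,[\,W \cap A =^* \emptyset \to W =^* \emptyset\,]$; define $\phi_2(A)$ to be $\exists R\,[\,R \ne^* \emptyset \,\wedge\, R \text{ computable} \,\wedge\, \{R\} \text{ generates } \mathcal{D}(A)\,]$; and define $\phi_3(A)$ to be $\exists D\,[\,D \ne^* \emptyset \,\wedge\, D \text{ not computable} \,\wedge\, \{D\} \text{ generates } \mathcal{D}(A)\,]$. Each conjunct is first-order by the observations above, so each $\phi_n$ is first-order.

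What remains is to verify that $\phi_n(A)$ captures exactly the Type~$n$ sets. Any Type~$n$ set visibly satisfies $\phi_n$ by Definition~\ref{sec:typedefn}, so only the converses need attention, i.e.\ we need to rule out that $\phi_n(A)$ can hold while $A$ has a generating set of some lower Type. For $\phi_1$, Lemma~\ref{sec:gener-sets-simple}(iii) directly gives $A$ is Type~1. If $\phi_2(A)$ holds via $R$, then $R$ is an infinite c.e.\ set disjoint from $A$, so $A$ is not simple and hence not Type~1; thus $A$ is Type~2. If $\phi_3(A)$ holds via $D$, then $D$ again witnesses non-simplicity, so $A$ is not Type~1; and if $A$ were Type~2 with single generator $R$, then both $\{R\}$ and $\{D\}$ would generate $\mathcal{D}(A)$, so the uniqueness clause of Lemma~\ref{sec:finitegen} would force $D =^* R$, contradicting noncomputability of $D$. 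Hence $A$ is Type~3.

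The only real obstacle is the minimality clause in Definition~\ref{sec:typedefn}, which could in principle require a separate ``not Type $m$'' assertion for each $m < n$. The uniqueness statement in Lemma~\ref{sec:finitegen} together with the observation that any infinite c.e.\ set disjoint from $A$ contradicts simplicity handles this automatically, so no extra negated conjuncts are needed in the $\phi_n$.
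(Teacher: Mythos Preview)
Your proof is correct and follows essentially the same approach as the paper's. The only cosmetic differences are that the paper phrases the Type~2 condition as ``there is a computable $R$ disjoint from $A$ with $A \sqcup R$ simple'' (equivalent to your ``$\{R\}$ generates $\mathcal{D}(A)$'' via Corollary~\ref{sec:splits2cor}), and the paper's Type~3 clause does not explicitly require $D$ noncomputable, whereas you add that condition and then use the uniqueness clause of Lemma~\ref{sec:finitegen} to verify disjointness from Types~1 and~2; your treatment of the minimality clause is in fact slightly more careful than the paper's.
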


\begin{proof}
  The set $A$ is Type 1 iff $A$ is simple, and $A$ is Type 2 iff there
  is a computable set $R$ disjoint from $A$ such that $A \sqcup R$ is
  simple. The set $A$ is Type 3 iff there a c.e. set $D$ such that $D$
  is disjoint from $A$ and for all c.e.\ sets $W$ disjoint from $A$,
  $W \subseteq^*
  D$.  
\end{proof}

In \S\ref{sec:main-result}, we will show that there are
$\mathcal{D}$-maximal sets of all ten Types.  Moreover, we will show
that $\mathcal{D}$-maximal sets of Type 4, 5 and 6 are definable,
somewhat extending Lemma \ref{sec:nice-generating-sets}.  However, the
following question is open.
\begin{question}\label{Q:alltypesdefinable}
  Is there a result similar to Lemma \ref{sec:nice-generating-sets}
  for the remaining Types of sets in a general setting?
\end{question}

We finish this section with a remark on the behavior of sets of
various Types under trivial or Friedberg splitting.

\begin{remark}\label{sec:splits-gener-mathc}
  Suppose $A_0 \sqcup A_1 = A$ is a trivial or Friedberg splitting and
  $A_0$ is not computable. If $A$ is Type 1 or 2, then either $A_0$ is
  Type 2 ($A_0 \sqcup A_1 = A$ is a trivial splitting) or Type 3
  ($A_0 \sqcup A_1 = A$ is a Friedberg splitting).  If $A$ is Type 3,
  then $A_0$ is Type 3. If $A$ is Type 4, then $A_0$ is Type 4
  ($A_0 \sqcup A_1 = A$ is a trivial splitting) or Type 5. If $A$ is
  Type 5 (6, 7, or 8), then $A_0$ is Type 5 (6, 7, or 8) (replace
  $D_0$ with the union of $D_0$ and $A_1$). If $A$ is Type 9 (10),
  then $A_0$ is Type 9 (10) (replace each $D_i$ with the union of
  $D_i$ and $A_1$). \end{remark}

We now examine the last four types more carefully.  First, we explore
the subtle difference between Types 9 and 10, which is encoded in the
last clauses of these Types' definitions.

\subsubsection{Type 10 sets and
  $r$-maximality}\label{sec:generating-sets-r}

Type 10 sets can arise as splits of $r$-maximal sets.

\begin{lemma}\label{sec:notsplitrmaxi}
  If $A$ is half of a splitting of an $r$-maximal set (so not of Type
  1) and $A$ is not Type 2 or 3, then $A$ is Type 10. \end{lemma}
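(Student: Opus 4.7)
The plan is to show $A$ is Type 10 by ruling out Types 1 through 9; since by Theorem~\ref{sec:nice-generating-sets-1} every c.e.\ set has a generating set of one of the ten Types, this suffices.  Types 1, 2, and 3 are excluded by hypothesis.  To handle Types 4 through 9, I use two facts derived from the $r$-maximality of $M = A\sqcup A_1$.  First, $M$ is simple: if $\overline{M}$ contained an infinite c.e.\ subset $W$, then $W$ would contain an infinite computable $R\subseteq\overline{M}$, but by $r$-maximality $\overline{M}\setminus R$ must be finite, giving $\overline{M}=^*R$ and making $M$ computable, a contradiction.  Second, if $R$ is an infinite computable set disjoint from $A$, then by applying $r$-maximality to $R\subseteq\overline{A}=A_1\sqcup\overline{M}$, either $R\subseteq^* A_1$ or $\overline{M}\subseteq^* R$.

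Types 4, 5, 7, 8, 9 all have generating sets containing infinitely many pairwise disjoint infinite computable sets $\{R_i\}$.  For at most one such $R_i$ can $\overline{M}\subseteq^* R_i$ hold (two such would force $\overline{M}$ finite via $R_i\cap R_j=^*\emptyset$), so the dichotomy yields $R_i\subseteq^* A_1$ for cofinitely many $i$.  Pick a finite subcollection $G$ of the generating set with $A_1\subseteq^* G$; for indices $i$ outside those contributing to $G$, $R_i\subseteq^* A_1\subseteq^* G$.  In Type 4, $G$ is a finite disjoint union of other $R_j$'s, so pairwise disjointness forces $R_i\subseteq^*\emptyset$, absurd.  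Types 5 and 8 are analogous, using the pairwise disjointness of the $D_j$'s in $G$ from $R_i$.  In Type 7, $G$ contains $D_0$ which is allowed to overlap $R_i$, so one only gets $R_i\subseteq^* D_0$ for cofinitely many $i$; but this makes those $R_i$'s redundant in the generating set, leaving only finitely many generators, whence by Lemma~\ref{sec:finitegen} $\mathcal{D}(A)$ has a single-set generating set and $A$ is Type 1, 2, or 3, contradicting the hypothesis.  In Type 9, $G$ has the form $D_l\cup\bigsqcup_{j\in F}R_j$ for some $l$ and finite $F$, which yields $R_i\subseteq^* D_l$ for cofinitely many $i\notin F$; this contradicts the defining clause of Type 9 requiring infinitely many $j$ with $R_j - D_l$ infinite.

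Type 6 is handled by the simplicity of $M$ alone: with pairwise disjoint noncomputable generators $\{D_0,D_1,\ldots\}$ and $A_1\subseteq^*\bigsqcup_{j\in F}D_j$ for some finite $F$, for $i\notin F$ pairwise disjointness forces $D_i\cap A_1$ finite, while $D_i\cap A=\emptyset$, so $D_i\cap M$ is finite; removing this finite intersection exhibits $D_i$ as an infinite c.e.\ subset of $\overline{M}$, contradicting the simplicity of $M$.  The main obstacle in this plan is the Type 7 step, since there the $R_i$'s are not forced to be disjoint from $D_0$; the resolution is to recognize that $R_i\subseteq^* D_0$ for cofinitely many $i$ makes all but finitely many $R_i$'s redundant, after which Lemma~\ref{sec:finitegen} collapses the situation back into the excluded Types 1--3.
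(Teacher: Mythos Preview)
Your overall strategy is sound and differs from the paper's proof.  The paper argues the contrapositive directly: for any c.e.\ $B$ disjoint from a Type 4--9 set $A$, it exhibits a computable set splitting $\overline{A \sqcup B}$ (taking $B$ to be a finite union of generators and using a leftover $R_i$, or an infinite computable subset of a leftover $D_i$, as the splitting witness).  You instead fix the given splitting $A \sqcup A_1 = M$, use the dichotomy that each computable $R_i$ in the generating set satisfies $R_i \subseteq^* A_1$ or $\overline{M} \subseteq^* R_i$, and argue that the finite cover of $A_1$ cannot absorb cofinitely many $R_i$'s.  Both are short; the paper's version is slightly more uniform across cases, while yours isolates the role of the single set $A_1$.

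There is, however, a gap in your Type~8 step.  You claim that case is ``analogous [to Type 5], using the pairwise disjointness of the $D_j$'s in $G$ from $R_i$,'' but the definition of Type~8 only asserts that the $D_j$'s are pairwise disjoint among themselves and the $R_i$'s among themselves; it does \emph{not} require $D_j \cap R_i = \emptyset$.  Hence from $R_i \subseteq^* \bigsqcup_{j\in F_1} D_j \cup \bigsqcup_{j\in F_2} R_j$ with $i\notin F_2$ you only obtain $R_i \subseteq^* \bigsqcup_{j\in F_1} D_j$, not $R_i =^* \emptyset$.  The repair is to argue as you did for Type~7: if cofinitely many $R_i$ are almost contained in $\bigsqcup_{j\in F_1} D_j$, they are redundant, so $\mathcal{D}(A)$ is generated by the pairwise disjoint family $\{D_0, D_1, \ldots\}$ together with finitely many computable sets.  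Absorbing those finitely many computable sets (set $\tilde{D}_0 = D_0 \cup R$ and $\tilde{D}_i = D_i \setminus R$ for $i\ge 1$, where $R$ is their union) yields a pairwise disjoint generating set, so by Lemma~\ref{sec:disjoint} the set $A$ has Type at most~6, contradicting the Type~8 hypothesis.
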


\begin{proof}
  We will show that if $A$ is Type 4, 5, 6, 7, 8, or 9 (and hence not
  of Type 1, 2, 3, or 10) then $A$ is not half of a splitting of an
  $r$-maximal set. Fix some infinite generating set $\mathcal{G}$ for
  $\mathcal{D}(A)$ of Type 4, 5, 6, 7, 8, or 9.

  Let $B$ be a c.e.\ set disjoint from $A$ (such sets exist since $A$
  is not Type 1). We show that $A\sqcup B$ is not $r$-maximal.  Since
  $\mathcal{G}$ is a generating set, $B$ is contained in some finite
  union of sets in $\mathcal{G}$. Every c.e.\ superset of an
  $r$-maximal set is either almost equal to $\omega$ or $r$-maximal
  itself. Since $A$ does not have Type 2 or 3,
  $A\sqcup B\not=^*\omega$ and we can assume $B$ is the union of these
  finitely many generators.  We proceed by cases. For $\mathcal{G}$ of
  Type 4, 5 or 7, an $R_i$ not part of the union witnesses that
  $A\sqcup B$ is not $r$-maximal.  For $\mathcal{G}$ of Type 6, an
  infinite computable subset of some $D_i$ not part of the union
  demonstrates that $A\sqcup B$ is not $r$-maximal.  For $\mathcal{G}$
  of Type 8 or 9, assume that
  $B \subseteq^* \bigcup_{j \leq i} R_j \cup \bigcup_{j \leq i} D_j$.
  If $\mathcal{G}$ has Type 8, there is some $l > i$ such that
  $D_l\cap \overline{ \bigcup_{j \leq i} R_j }$ is infinite.  An
  infinite computable subset of this intersection demonstrates that
  $A\sqcup B$ is not $r$-maximal.  Finally, suppose $\mathcal{G}$ is
  Type 9. By the last clause of Type 9, there is an ${r} >i $ such
  that $R_{r} - D_i$ is infinite. The computable set $R_{r}$ witnesses
  that $A\sqcup B$ is not $r$-maximal.

\end{proof}

Note that we cannot eliminate the assumption that $A$ is not Type 2 or
3 in Lemma \ref{sec:notsplitrmaxi}. If $A \sqcup R$ is a trivial
splitting of an $r$-maximal set, then $A\sqcup R$ is simple.  By
Corollary~\ref{sec:splits2cor}, $\mathcal{D}(A) = \{ R\}$ and $A$ is
Type 2. Similarly, by Corollary~\ref{sec:splits1cor}, if $A \sqcup B$
is a Friedberg splitting of an $r$-maximal set, $A$ is Type 3.

\begin{question}\label{Q:converseType10}
    If $A$ is $\mathcal{D}$-maximal and Type 10, then is $A$ half of a
  splitting of an  $r$-maximal set?
\end{question}

 We can, however, prove a stronger version of this statement with an additional assumption.

\begin{lemma}\label{sec:type-10-dmax}
  If $A$ is $\mathcal{D}$-maximal and Type 10, then $A$ is half of a
  splitting of an atomless $r$-maximal set. \end{lemma}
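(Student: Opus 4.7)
The plan is to fix a Type 10 generating set $\{D_0, D_1, \ldots\}$ for $\mathcal{D}(A)$ and then to take $B = D_K$, where $K$ is chosen via a boundedness claim about the computable members of $\mathcal{D}(A)$; the goal is to show $E = A \sqcup D_K$ is atomless $r$-maximal. Both verifications of this fact reduce to applying Lemma \ref{L:Dmaxchar} to $A$ and exploiting the nested chain of $D_i$'s.

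The key step is the following bound: there exists $K \in \omega$ such that every computable $C \in \mathcal{D}(A)$ satisfies $C \subseteq^* D_K$. I would prove this by contradiction; if it failed, then for each $n$ we could pick a computable $C_n \in \mathcal{D}(A)$ with $C_n \not\subseteq^* D_n$, and then inductively thin them to pairwise disjoint infinite computable sets $\tilde{C}_n$ by setting $\tilde{C}_n = C_{m_n} - \bigsqcup_{i<n}\tilde{C}_i$ for an index $m_n \geq n$ chosen large enough that $\bigsqcup_{i<n}\tilde{C}_i \subseteq^* D_{m_n}$ (which is possible because that finite union is a computable member of $\mathcal{D}(A)$ and so is bounded by some $D_M$ by Type 10 generation). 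Then $C_{m_n} - D_{m_n}$ is infinite and disjoint from $D_n$, and lies in $\tilde{C}_n$ modulo finite, so $\tilde{C}_n - D_n$ is infinite; by the nesting of the $D_i$'s, for every $l$ there are infinitely many $n$ (namely all $n \geq l$) with $\tilde{C}_n - D_l$ infinite. Hence $\{D_i\} \cup \{\tilde{C}_n\}$ is a generating set for $\mathcal{D}(A)$ of Type 9, contradicting the assumption that $A$ is Type 10.

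With $K$ in hand, $r$-maximality of $E$ follows quickly by applying Lemma \ref{L:Dmaxchar} to $W = R \cup A$ for a given computable $R$. If $R - A$ is c.e., then it is actually computable (its complement $\overline{R} \cup A$ is c.e.) and lies in $\mathcal{D}(A)$, so $R - A \subseteq^* D_K$ and $R \subseteq^* E$. Otherwise there is a computable $R'$ with $A \subseteq R' \subseteq R \cup A$; then $\overline{R'}$ is a computable member of $\mathcal{D}(A)$, so $\overline{R'} \subseteq^* D_K$, which unpacks via $R' \subseteq R \cup A$ to $\overline{A} \cap \overline{R} \subseteq^* D_K$, that is, $\overline{E} \subseteq^* R$. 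For atomlessness, take a c.e. $W \supseteq E$ with $W \neq^* \omega$: the second alternative of Lemma \ref{L:Dmaxchar} applied to $W$ produces computable $R' \subseteq W$ with $\overline{R'} \subseteq^* D_K$, and combined with $D_K \subseteq W$ this forces $W =^* \omega$, contradicting the hypothesis. So the first alternative holds, $W - A$ is c.e.\ and $\subseteq^* D_m$ for some $m \geq K$ (since $D_K \subseteq W - A$), and then $A \sqcup D_{m+1}$ is a c.e.\ superset of $W$ that strictly contains $W$ modulo finite (because $D_{m+1} - D_m$ is infinite and disjoint from $A \cup D_m \supseteq^* W$) and is not $=^* \omega$ (because $\overline{A}$ is not c.e.), witnessing that $W$ is not maximal.

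The principal obstacle is the boundedness claim: it is what converts the qualitative distinction between Types 9 and 10 into a usable uniform bound on computable sets in $\mathcal{D}(A)$. Once established, the rest is a direct unpacking of $\mathcal{D}$-maximality in the presence of the nested generating chain.
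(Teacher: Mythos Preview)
Your proof is correct and follows essentially the same strategy as the paper's: both derive a contradiction by manufacturing a Type~9 generating set from the assumed failure. The paper phrases the key step as ``some $A\sqcup D_i$ is $r$-maximal'' and builds the computable $R_i$ by repeatedly splitting $\overline{A\sqcup D_{n+1}\cup\bigsqcup_{i\le n}R_i}$, invoking $\mathcal{D}$-maximality (Definition~\ref{def:Dmaxequiv}) to force the resulting pieces disjoint from $A$; you phrase it as the equivalent boundedness claim on computable sets disjoint from $A$ and thin a sequence of witnesses directly. For atomlessness the paper again uses Definition~\ref{def:Dmaxequiv} together with the $r$-maximality just established, while you route both $r$-maximality and atomlessness through Lemma~\ref{L:Dmaxchar} and your boundedness claim---a slightly more uniform packaging, but not a genuinely different argument.
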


\begin{proof}   Let $\mathcal{G}=\{ D_0, D_1, \ldots \}$ be a Type 10
generating set for $\mathcal{D}(A)$.  
We first show that  $A \sqcup D_i$ is $r$-maximal
for some $i\in\omega$.  Suppose otherwise.  We construct an infinite collection $\left\lbrace R_0, R_1, \ldots\right\rbrace$ of infinite computable
pairwise disjoint sets all disjoint from $A$ such that $R_i - D_i$ is
infinite for all $i\in\omega$.  As $D_{i+1} \supseteq D_i$ this entails that $R_i - D_l$ is infinite for all $i>l$. Thus $G' = \left\lbrace R_0, R_1, \ldots, D_0, D_1, \ldots \right\rbrace$ is a generating set for $\mathcal{D}(A)$ of Type 9 and by Definition
\ref{sec:typedefn}, $A$ has Type at most $9$.

We assume inductively that  $R_0, \ldots, R_n$ are infinite computable
pairwise disjoint sets all disjoint from $A$ such that $R_j - D_j$ is
infinite for $j\leq n$.  Suppose that $A \cup D_{n+1} \cup\bigsqcup_{i
\leq n} R_n = C$ is not $r$-maximal. So, $\overline{C}$ is split by
some infinite computable set $R$.  Since   $A$ is
$\mathcal{D}$-maximal, by Definition \ref{def:Dmaxequiv}   there is an
infinite  c.e.\ set $D$, disjoint from $A$, such that either $A \sqcup D \supseteq^*
R$ or $D \cup A \cup R =^* \omega$. Without loss of generality, we may
assume the former, since in the latter case $A
\sqcup D \supseteq^* \overline{R}$.  Define $R_{n+1} = (D \cap R) - \bigsqcup_{i \leq n} R_i$. Since
$R$ splits $\overline{C}$ and $\overline{C} \cap R \subseteq R_{n+1} -
D_{n+1}$ it follows that $R_{n+1} - D_{n+1}$ is infinite.  By
definition $R_{n+1}$ is disjoint from $R_i$ for $i \leq n$ and as $D$
is disjoint from $A$ so is $R_{n+1}$.  Finally, as $D \cap R$ is the
complement of $(A \cap R) \cup \overline{R}$, $R_{n+1} = (D \cap R) -
\bigsqcup_{i \leq n} R_i$ is also computable.   

Since the $D_i$ are nested, we may suppose without loss of generality  that $A\sqcup D_0$ is $r$-maximal.  We now show that $A\sqcup D_0$ is atomless.     Suppose $W$ is a superset of $A \sqcup D_0$ such that $\overline{W}$ is
  infinite.  Since $A$ is $\mathcal{D}$-maximal and  $\{ D_0, D_1, \ldots \}$ is a Type 10 
  generating set consisting of nested sets,
  $W\subseteq^* A\sqcup D_j $ or $W\cup (A\sqcup D_j)=^*\omega$ for
  some $j$.  The latter case is impossible since
  $W\cup (A\sqcup D_j)=^*\omega$ implies there is a computable set $R$
  such that $W \cup R =^* \omega$, which contradicts that $A$ is
  $r$-maximal.  In the former case, $W\subseteq^* A\sqcup D_{j+1} $
  and $|A\sqcup D_{j+1} -W|=\infty$.\end{proof}

There are several examples in the literature of sets $A$ that are
$\mathcal{D}$-maximal splits of atomless $r$-maximal sets (see \S\ref{sec:type-10-sets}). 
 In \S\ref{sec:type-10-sets}, we will
construct a splitting of an atomless $r$-maximal set that has Type 10.

\subsubsection{ Types 7, 8 and 9: the hhsimple-like types}
\label{sec:types-7-8-9}

In this section, we discuss how some sets of Types 7, 8 and 9 behave
similarly to splits of hhsimple sets.  First, we show that we can
further refine generating sets for these Types.

\begin{lemma}\label{sec:types7}
  If a set $A$ is Type 7, there exists a Type 7 generating set
  $\{D_0, R_0, R_1, \ldots\}$ for $\mathcal{D}(A)$ such that:
  \begin{enumerate}
  \item for all $j\in\omega$, the set $R_j - D_0$ is infinite, and
    hence $\overline{A} -D_0$ is infinite.
  \item $ D_0 \subseteq \bigsqcup_{i \in \omega} R_i = \overline{A}$.
  \end{enumerate}
\end{lemma}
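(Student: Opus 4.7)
The plan is to start with an arbitrary Type 7 generating set $\mathcal{G} = \{D_0, R_0, R_1, \ldots\}$ for $\mathcal{D}(A)$ and refine it in two stages to satisfy (1) and (2). Stage 1 prunes away the ``redundant'' $R_j$'s (those almost contained in $D_0$), and Stage 2 enlarges the surviving $R_i$'s so that their union is all of $\overline{A}$.

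For Stage 1, let $J = \{j \in \omega : R_j - D_0 \text{ is infinite}\}$ and form $\mathcal{G}_1 = \{D_0\} \cup \{R_j : j \in J\}$. For each $j \notin J$, $R_j \subseteq^* D_0$, so such $R_j$'s are redundant and $\mathcal{G}_1$ still generates $\mathcal{D}(A)$. I then need to verify that $\mathcal{G}_1$ is itself of Type 7. First, $J$ must be infinite; otherwise $\mathcal{G}_1$ would be finite, forcing $A$ to have Type 1, 2, or 3 by Lemma~\ref{sec:finitegen}, contradicting that $A$ is Type 7. Second, infinitely many $j \in J$ must satisfy $R_j \cap D_0 \neq \emptyset$: if only the indices in some finite $J_0 \subseteq J$ did, then
\[
  \{D_0 \cup \textstyle\bigcup_{j \in J_0} R_j\} \cup \{R_j : j \in J \setminus J_0\}
\]
would be a pairwise disjoint family still generating $\mathcal{D}(A)$, giving either a Type 4 or a Type 5 generating set depending on whether $D_0 \cup \bigcup_{j \in J_0} R_j$ is computable, again contradicting Type 7.

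For Stage 2, reindex the surviving computable sets as $R_0, R_1, \ldots$ and, in the spirit of Lemma~\ref{sec:all}, let $X = \overline{A} - \bigsqcup_i R_i$ and enumerate $X = \{x_0 < x_1 < \ldots\}$ (possibly finite). Define $\tilde R_i = R_i \cup \{x_i\}$ when $x_i$ exists and $\tilde R_i = R_i$ otherwise. Each $\tilde R_i$ is still an infinite computable set disjoint from $A$, the family stays pairwise disjoint, and $\bigsqcup_i \tilde R_i = \big(\bigsqcup_i R_i\big) \cup X = \overline{A}$. Because enlarging generators preserves generation, $\{D_0, \tilde R_0, \tilde R_1, \ldots\}$ still generates $\mathcal{D}(A)$. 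Condition (2) is then immediate from $D_0 \subseteq \overline{A} = \bigsqcup_i \tilde R_i$, and condition (1) holds since $\tilde R_j - D_0 \supseteq R_j - D_0$ is infinite by Stage 1. Infinitely many $\tilde R_j$ still meet $D_0$ (inherited from Stage 1), so Type 7 is preserved.

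The main obstacle is Stage 1: checking that the pruning does not accidentally collapse the family into a pairwise disjoint one that would then give a lower-Type generating set. The minimality inherent in $A$ being Type 7 is exactly what rules this out, and the two-case analysis on the computability of $D_0 \cup \bigcup_{j \in J_0} R_j$ handles both the Type 4 and Type 5 threats. Stage 2, by contrast, is essentially a transcription of Lemma~\ref{sec:all} together with the remark that adding one element to a computable set keeps it computable.
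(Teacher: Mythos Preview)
Your proposal is correct and follows essentially the same two-stage approach as the paper: first prune away the $R_j$ with $R_j \subseteq^* D_0$, then invoke Lemma~\ref{sec:all} to enlarge the remaining $R_i$ so that $\bigsqcup_i R_i = \overline{A}$. You are in fact more careful than the paper's own proof, which only checks that infinitely many $R_j$ survive the pruning and does not explicitly verify that the Type~7 clause ``$D_0 \cap R_i \neq \emptyset$ for infinitely many $i$'' is preserved; your second part of Stage~1 fills that gap.
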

\begin{proof}
  Given some Type 7 generating set $\{D_0, R_0, R_1, \ldots\}$ for
  $\mathcal{D}(A)$, if $R_j \subseteq^* D_0$ for some $j$, we can
  remove $R_j$ from the list of generators. Infinitely many $R_j$ will
  remain since
  otherwise $A$ would be of lower Type. For the remaining $j$,
  $R_j - D_0$ is infinite. Then, by Lemma~\ref{sec:all}, we can adjust
  the $R_j$ so that
  $D_0 \subseteq \bigsqcup_{i \in \omega} R_i =
  \overline{A}$. \end{proof}

For sets of Type 8 or 9, for the first time, we will place conditions
on the order of the sets in the generating set.  We use this property
when we show that Type 8 and 9 $\mathcal{D}$-maximal sets exist.  The
proof, though more difficult than that of Lemma \ref{sec:types7} due
to this ordering, is similar to the proof of Lemma~\ref{sec:nested}.

\begin{lemma}\label{sec:types-7-8}
  If a set $A$ is Type 8 (respectively 9), there exists a Type 8
  (respectively 9) generating set such that for all $j \in \omega$:
  \begin{enumerate}
  \item \label{last} for all $i>j$, $D_i \cap R_j=\emptyset$ \\
    (respectively $(D_i-D_{i-1})\cap R_j=\emptyset$ for Type 9).
  \item\label{last2} the set $R_j - \bigcup_{i \leq j} D_i$ is
    infinite.
  \item $ \bigcup_{i \in \omega} D_i \subseteq \bigsqcup_{i \in
      \omega} R_i = \overline{A}$.\\
    So, $\overline{A} - \bigcup_{i \in \omega} D_i$ is infinite.
  \end{enumerate}

\end{lemma}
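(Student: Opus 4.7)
My plan is to transform a given Type 8 (respectively Type 9) generating set $\mathcal{G}_0 = \{D_0, D_1, \ldots\} \cup \{R_0, R_1, \ldots\}$ for $\mathcal{D}(A)$ into one satisfying the three conditions, in two steps. First, I apply Lemma~\ref{sec:all} to the pairwise disjoint family $\{R_j\}$ (with $\{D_i\}$ playing the role of $\mathcal{G}$) to obtain a new family with the same generating property and with $\bigsqcup_j R_j = \overline{A}$; each enlarged $R_j$ gains at most one point, so it remains infinite, computable, and disjoint from the other $R$'s, which establishes condition~(3). I then discard every $R_j$ for which $R_j - \bigcup_i D_i$ is finite: by pairwise disjointness of the $R_k$'s, any cover of such an $R_j$ by generators from $\mathcal{G}_0 \setminus \{R_j\}$ must reduce to a cover by finitely many $D_i$'s (since the other $R_k$'s are disjoint from $R_j$), and hence $R_j$ is redundant. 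Infinitely many $R_j$'s must remain, for otherwise $A$ would admit a generating set of a lower Type; and each surviving $R_j$ has $R_j - \bigcup_i D_i$ infinite, which will give condition~(2) once the ordering is arranged.

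Achieving the ordering condition~(1) is the main work, and I follow the template of Lemma~\ref{sec:nested}. For each starting parameter $k \in \omega$ I attempt a greedy inductive construction: set $r_k(0) = k$, let $\tilde{R}_0 = R_{r_k(0)}$, and define $l_k(0) = \max\{i : D_i \cap \tilde{R}_0 \neq \emptyset\}$, provided this maximum is finite; then put $\tilde{D}_0 = D_0 \cup \cdots \cup D_{l_k(0)}$. At stage $n+1$, choose $r_k(n+1)$ to be the least index with $r_k(n+1) > r_k(n)$ such that $R_{r_k(n+1)}$ is disjoint from $D_0 \cup \cdots \cup D_{l_k(n)}$ and $\max\{i : D_i \cap R_{r_k(n+1)} \neq \emptyset\}$ exists; let $l_k(n+1)$ be that maximum, set $\tilde{R}_{n+1} = R_{r_k(n+1)}$, and let $\tilde{D}_{n+1} = \bigcup_{l_k(n) < i \leq l_k(n+1)} D_i$. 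Condition~(1) is built in: whenever $D_i \cap \tilde{R}_j \neq \emptyset$ we must have $i \leq l_k(j)$, so $D_i$ sits inside $\tilde{D}_m$ for some $m \leq j$. Condition~(2) is already secured by the cleanup.

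The main obstacle is verifying that the greedy construction actually closes: the required $r_k(n+1)$ may fail to exist if every unused $R_j$ disjoint from $D_0 \cup \cdots \cup D_{l_k(n)}$ intersects infinitely many $D_i$'s. If this failure occurs for every starting parameter $k$, the diagonal fallback of Lemma~\ref{sec:nested} produces a nested sequence of c.e.\ sets $\bigcup_{i \leq l_k(n)} D_i$ whose successive differences are not c.e., hence a Type 9 (or Type 10) generating set for $\mathcal{D}(A)$---contradicting the assumption that $A$ is Type 8. The Type 9 case follows by the same scheme applied to the successive layers $D_i - D_{i-1}$ instead of the $D_i$'s themselves, which naturally yields the modified ordering condition $(D_i - D_{i-1}) \cap R_j = \emptyset$ for $i > j$.
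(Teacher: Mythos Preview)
Your fallback argument for Type~8 does not work. When the greedy search fails for every starting $k$, you invoke the diagonal scheme of Lemma~\ref{sec:nested} to produce a nested family $\bigcup_{i\le m}D_i$ with non-c.e.\ successive differences. But the $D_i$ in a Type~8 generating set are pairwise disjoint, so $\bigcup_{i\le m'}D_i-\bigcup_{i\le m}D_i=\bigsqcup_{m<i\le m'}D_i$ is \emph{always} c.e.; no Type~9 or~10 generating set can arise this way, and you obtain no contradiction. Concretely, nothing in your setup rules out the possibility that every $R_j$ meets infinitely many $D_i$, in which case your greedy construction never even defines $l_k(0)$. There are secondary problems as well---the discarded and skipped $R_j$'s are never shown to be covered by the surviving generators (so the result may fail to generate $\mathcal{D}(A)$), condition~(3) is lost once you start dropping $R_j$'s, and the Type~9 sketch cannot literally operate on the non-c.e.\ layers $D_i-D_{i-1}$---but the broken fallback is the central gap.

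The paper avoids selection altogether. It groups the original $\tilde{R}_j$'s into consecutive blocks $R_i=\bigsqcup_{r(i-1)<j\le r(i)}\tilde{R}_j$ (so $\bigsqcup_i R_i=\overline{A}$ is automatic) and defines each new $D_{i+1}$ as a block of old $\tilde{D}$'s \emph{minus} $\bigsqcup_{j\le i}R_j$. Because the $R_j$ are computable this difference is still c.e., and condition~(1) holds for free. The only work is choosing the $\tilde{D}$-block large enough that the difference remains noncomputable (otherwise $A$ would be at most Type~7) and then choosing the next $R_{i+1}$-block so that $R_{i+1}-\bigcup_{j\le i+1}D_j$ is infinite (otherwise at most Type~6). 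For Type~9 one simply adds $D_i$ into $D_{i+1}$ to keep the nesting.
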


\begin{proof}
  Suppose that
  $\mathcal{G}=\{ \tilde{D}_0, \tilde{D}_1, \ldots \tilde{R}_0,
  \tilde{R}_1 \ldots \}$
  is Type 8 or Type 9 and generates $\mathcal{D}(A)$.  By
  Lemma~\ref{sec:all}, we can assume that
  $\overline{A}= \bigsqcup_{i \in \omega} \tilde{R}_i $. We
  inductively define a new generating set
  $\{D_0, D_1, \ldots, R_0, R_1, \ldots\}$ for $\mathcal{D}(A)$ with
  the desired properties and helper functions $d$ and $r$ from
  $\omega$ to $\omega$.

  Set $D_0= \tilde{D}_0$ and $d(0)= 0$. We claim that there exists an
  $i\in\omega$ such that $\tilde{R}_i- D_0$ is infinite.  This is true
  by definition if $\mathcal{G}$ is Type 9.  Suppose $\mathcal{G}$ is
  Type 8.  If the claim is false, then $\tilde{R}_i \subseteq^* D_0$
  for all $i\in\omega$.  So, $\{ \tilde{D}_0, \tilde{D}_1, \ldots\}$,
  a collection of pairwise disjoint c.e. sets, would generate
  $\mathcal{D}(A)$, and $A$ would be at most Type 6, a contradiction.
  Let $l$ be least such that $\tilde{R}_l - D_0$ is infinite. Let
  $R_0 = \bigsqcup_{j \leq l } \tilde{R}_j$ and $r(0)=l$.

  Assume that, for all $j\le i$, $D_j$, $R_j$, $d(j)$ and $r(j)$ are
  defined so that $D_i$ is not computable, $R_i$ is computable, and
  \begin{equation*}
    \bigcup_{j\le d(i)}\tilde{D}_j\ \cup\ \bigsqcup_{j\le r(i)}\tilde{R}_j\subseteq^* \bigcup_{j\le i} (D_j\cup R_j).
  \end{equation*}
  \noindent
  We claim there exists some (and hence a least) $l > d(i)$ such that
  $\bigcup_{d(i) < j \leq l} \tilde{D}_l - \bigsqcup_{j \leq i} R_j$
  is not computable.  If not, for all $k> d(i)$,
  \begin{equation*}\tilde{D}_k - \Big(
    \bigsqcup_{j \leq i} R_j \cup \bigcup_{d(i) < j < k} \tilde{D}_l
    \Big)
  \end{equation*} is computable. These computable
  sets, the computable sets $\{\tilde{R}_{r(i)+1},
  \tilde{R}_{r(i)+2}, \ldots \}$, and the noncomputable set $\bigcup_{j
    \leq i} ( D_j \cup R_j)$ generate $\mathcal{D}(A)$. Now, we can apply
  Lemma~\ref{sec:computable} to the computable sets in this list to show that $A$
  is at most Type 7. So, the desired least $l$ exists.
  Set $d(i+1) = l$ and $D_{i+1}=\bigcup_{d(i) < j \leq l} \tilde{D}_l -
  \bigsqcup_{j \leq i} \tilde{R}_j$.  If $\mathcal{G}$ is Type 9, we also add the elements of  $D_i$ to  $D_{i+1}$ to ensure the nesting property is satisfied.

  Let $l > r(i)$ be least such that
  $\tilde{R}_l - \bigcup_{j \leq i+1} D_j$ is infinite. Again, such an
  $l$ exists by definition if $\mathcal{G}$ is Type 9.  If
  $\mathcal{G}$ is Type 8 and $l$ fails to exist, none of the
  remaining $\tilde{R}_i$ are needed to generate $\mathcal{D}(A)$.
  Since the sets in $\{\tilde{D}_j\mid j\le i+1\}$ are pairwise
  disjoint ($\mathcal{G}$ is Type 8), $A$ is at most Type 6, a
  contradiction. So, we can set
  $R_{i+1} = \bigsqcup_{r(i) < j \leq l} \tilde{R}_j$ and $r(i+1)=l$.
  By construction, $\{D_0, D_1, \ldots, R_0, R_1, \ldots\}$ has the
  desired properties.
\end{proof}
Hence, if $A$ is Type 7, 8, or 9, we obtain the following analogue to
Theorem \ref{T:Lachlan}.

\begin{corollary}\label{sec:hhsimple-like-typesBA}
  Suppose that $A$ is Type 7, 8, or 9, and let
  $\breve{D}=\bigcup_{i\in\omega} D_i\cup A$.  Unless $A$ is of Type
  7, $\breve{D}$ is not a c.e.\ set.  The sets
  $\{ R_0, R_1, \ldots \}$ and finite boolean combinations of these
  sets form an infinite $\Sigma^0_3$ boolean algebra, $\mathcal{B}$,
  which is a substructure of
  $\mathcal{L}^*(\breve{D})$. \end{corollary}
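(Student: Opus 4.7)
The plan is to verify the three assertions in sequence: (i) $\breve{D}$ is not c.e.\ when $A$ has Type 8 or 9; (ii) $\mathcal{B}$ is closed under the operations of $\mathcal{L}^*(\breve{D})$ and contains infinitely many distinct elements; and (iii) the resulting boolean algebra admits a $\Sigma^0_3$ presentation. Throughout I work with the refined generating sets supplied by Lemma \ref{sec:types7} (for Type 7) and Lemma \ref{sec:types-7-8} (for Types 8 and 9).

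For (i), I argue by contradiction. Suppose $\breve{D}$ is c.e. Then $\bigcup_i D_i = \breve{D} - A$ is c.e.\ and disjoint from $A$, so it lies in $\mathcal{D}(A)$ and is covered by finitely many generators: there exist finite $F, G \subseteq \omega$ with
\begin{equation*}
\bigcup_i D_i \subseteq^* \bigcup_{j \in F} D_j \cup \bigcup_{j \in G} R_j.
\end{equation*}
Choose $i_0 > \max(F \cup G)$. In Type 8, $D_{i_0}$ is pairwise disjoint from every $D_j$ with $j \in F$ and disjoint from every $R_j$ with $j \in G$ by condition (1) of Lemma \ref{sec:types-7-8}; hence $D_{i_0}$ is almost empty, contradicting its infinitude. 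In Type 9, the nesting of the $D_i$ collapses $\bigcup_{j \in F} D_j$ to $D_{\max F} \subseteq D_{i_0}$, so the cover gives $D_{i_0+1} - D_{i_0} \subseteq^* \bigcup_{j \in G} R_j$; but the Type 9 form of condition (1) gives $(D_{i_0+1} - D_{i_0}) \cap R_j = \emptyset$ for every $j \leq i_0$. Thus $D_{i_0+1} - D_{i_0}$ is finite and hence c.e., contradicting the defining non-c.e.\ clause of Type 9.

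For (ii), each finite boolean combination $W$ of the $R_i$'s is computable, so $W \cup \breve{D}$ lies in $\mathcal{L}(\breve{D})$, and closure under $\cup$ and $\cap$ is immediate. The complement of $R_i \cup \breve{D}$ in $\mathcal{L}^*(\breve{D})$ is realized by $\overline{R_i} \cup \breve{D}$ (itself in $\mathcal{B}$), since the meet and join reduce to $\breve{D}$ and $\omega$ respectively. For infinitude I combine properties (1) and (2) of the refined generating set together with $R_j \cap A = \emptyset$: one computes $R_j - \breve{D} = R_j - \bigcup_{i \leq j} D_i$, which is infinite. Hence each $R_j \cup \breve{D}$ differs from the zero $\breve{D}$, and since $R_i \cap R_j = \emptyset$ for $i \neq j$ makes the meet of $R_i \cup \breve{D}$ and $R_j \cup \breve{D}$ equal to $\breve{D}$, these elements are pairwise distinct. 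An analogous argument covers Type 7 using Lemma \ref{sec:types7}.

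For (iii), I show that $\mathcal{B}$ is isomorphic to the computable boolean algebra of finite and cofinite subsets of $\omega$, which is in particular $\Sigma^0_3$. Because the $R_i$'s are pairwise disjoint with $\bigsqcup_i R_i = \overline{A}$, every finite boolean combination of them reduces, as a subset of $\omega$, to $\bigcup_{i \in F} R_i$ for some finite $F$ or to its set-theoretic complement $A \cup \bigcup_{i \notin F} R_i$; modulo $\breve{D}$ (which contains $A$), the latter equals $\bigcup_{i \notin F} R_i \cup \breve{D}$, giving a complement in $\mathcal{L}^*(\breve{D})$ for the former. The distinctness argument from (ii), applied to symmetric differences, ensures distinct finite or cofinite index sets yield distinct elements of $\mathcal{B}$. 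The main obstacle is the Type 9 case of (i): one must apply condition (1) precisely to the telescoping piece $D_{i_0+1} - D_{i_0}$ in order to force its disjointness from every $R_j$ with $j \in G$ and so contradict the non-c.e.\ clause.
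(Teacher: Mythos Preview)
Your parts (ii) and (iii) are essentially correct and, in fact, more detailed than the paper's proof, which is a terse two sentences: ``The relation $\subseteq^*$ is $\Sigma^0_3$. Each $R_i$ is complemented and infinitely different from $R_j$ for all $j \neq i$.'' Your route in (iii)---identifying $\mathcal{B}$ explicitly with the finite--cofinite algebra on $\omega$---is a genuinely different and arguably cleaner justification of the $\Sigma^0_3$ claim than merely invoking the complexity of $\subseteq^*$, since the latter tacitly presupposes some effectiveness in the list $\{R_i\}$ that Lemma~\ref{sec:types-7-8} does not supply.

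There is, however, a real gap in your part (i). From ``$\breve{D}$ is c.e.'' you immediately infer ``$\bigcup_i D_i = \breve{D} - A$ is c.e.'' This step is not justified: $\breve{D}-A = \breve{D}\cap\overline{A}$, and $\overline{A}$ is only co-c.e.\ (indeed $A$ is not computable for Types 8 and 9), so the difference is a priori merely d-c.e. The covering property of a generating set applies only to \emph{c.e.}\ sets disjoint from $A$, so without this inference your contradiction does not go through. The rest of your argument in (i)---choosing $i_0$ large and using condition~(1) of Lemma~\ref{sec:types-7-8} to force a contradiction---would be fine \emph{if} $\bigcup_i D_i$ were known to be c.e., but that is exactly what is in doubt. (What your argument does cleanly establish is the related but weaker fact that $\bigcup_i D_i$ itself is not c.e.; getting from there to ``$A\cup\bigcup_i D_i$ is not c.e.'' is the missing step.)

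It is worth noting that the paper's own proof does not address the non-c.e.\ assertion at all; it proves only the boolean-algebra part. So the claim about $\breve{D}$ is evidently being treated as routine, but a complete argument would still need to bridge the gap above---for instance by producing, from a hypothetical c.e.\ $\breve{D}$, a generating set of Type at most $7$ without first passing through the c.e.-ness of $\bigcup_i D_i$.
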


\begin{proof}
  The relation $\subseteq^*$ is $\Sigma^0_3$. Each $R_i$ is
  complemented and infinitely different from $R_j$ for all
  $j\not=i$. \end{proof}

This substructure might be proper if $R_i \cap \breve{D}$ is finite
for all $i\in\omega$. If $\mathcal{B}$ is not proper then, for all
$i$, $\mathcal{L}^*(\breve{D} \cap \overline{R}_i)$ must be a boolean
algebra and hence $\breve{D}$ must be hhsimple inside $R_i$. By Lemma
\ref{sec:types-7-8}, $R_i \cap \breve{D}$ is a c.e.\ set.

If $A$ is $\mathcal{D}$-maximal, then the converse holds. Assume that
$A$ is $\mathcal{D}$-maximal and $\breve{D}$ is hhsimple inside $R_i$
for all $i$.  Given a set $W$, there is a finite set $F\subset\omega$
such that either
$W \subseteq^* \breve{D} \, \cup\, \bigsqcup_{i \in F} R_i$ or
$W \cup \breve{D}\, \cup\, \bigsqcup_{i \in F} R_i =^* \omega$. In
either case, $W$ is complemented inside
$\mathcal{L}^*(\breve{D})$. So, $\mathcal{L}^*(\breve{D})$ is a
boolean algebra.

In \S\ref{sec:build-hhsimple-like}, we show that $\mathcal{D}$-maximal
sets of Types 7, 8, and 9 exist. When we construct these three Types
of sets, we will ensure that $\breve{D}$ is hhsimple inside $R_i$ for
all $i\in\omega$, and, moreover, for all $ i \geq j$, $D_j \cap R_i$
is infinite and noncomputable. Our construction and Corollary
\ref{sec:hhsimple-like-typesBA} lead us to call Types 7, 8, and 9
hhsimple-like. For Type 7 we have the following corollary.

\begin{corollary}\label{sec:hk}
  There is half of a splitting of a hhsimple set that is
  $\mathcal{D}$-maximal and Type 7. \end{corollary}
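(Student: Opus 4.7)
The plan is to assemble the corollary directly from the machinery that has already been developed in the preceding discussion, together with the construction promised in \S\ref{sec:build-hhsimple-like}. The only genuinely external input we need is the existence of a $\mathcal{D}$-maximal set of Type 7 whose generating set has an extra hhsimplicity-inside-$R_i$ property, which is exactly what the announced construction provides.

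First, I would invoke the construction from \S\ref{sec:build-hhsimple-like} (yielding the Type 7 case of Theorem \ref{sec:main-result-1}) to obtain a $\mathcal{D}$-maximal set $A$ of Type 7 together with a generating set $\{D_0, R_0, R_1, \ldots\}$ for $\mathcal{D}(A)$ such that $\breve{D} := D_0 \cup A$ is hhsimple inside each $R_i$. Then I would apply Lemma \ref{sec:types7} to refine this generating set so that $D_0 \subseteq \bigsqcup_{i\in\omega} R_i = \overline{A}$; in particular $D_0$ is disjoint from $A$, so $\breve{D} = A \sqcup D_0$ is a genuine (nontrivial) splitting of the c.e.\ set $\breve{D}$ with $A$ as one half. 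Note that for Type 7 the set $\breve{D}$ is automatically c.e., in contrast to Types 8 and 9.

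Next I would verify that $\breve{D}$ is hhsimple, using exactly the argument recorded in the paragraph immediately preceding the corollary. Given any c.e.\ $W$, $\mathcal{D}$-maximality of $A$ applied to $W$ yields a c.e.\ set $E$ disjoint from $A$ with either $W \subseteq^* A \sqcup E$ or $W \cup (A \sqcup E) =^* \omega$; since $\{D_0, R_0, R_1, \ldots\}$ generates $\mathcal{D}(A)$, there is a finite $F \subset \omega$ such that $E \subseteq^* D_0 \cup \bigsqcup_{i\in F} R_i$, and therefore either
\[
W \subseteq^* \breve{D} \cup \bigsqcup_{i\in F} R_i \quad \text{or} \quad W \cup \breve{D} \cup \bigsqcup_{i\in F} R_i =^* \omega.
\]
Because $\breve{D}$ is hhsimple inside each $R_i$, in either case $W$ is complemented in $\mathcal{L}^*(\breve{D})$, so $\mathcal{L}^*(\breve{D})$ is a boolean algebra; by Lachlan's characterization (the definition recalled in \S\ref{D-maxbackground}), $\breve{D}$ is hhsimple.

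This shows $A$ is one half of the splitting $\breve{D} = A \sqcup D_0$ of a hhsimple set, while remaining $\mathcal{D}$-maximal and of Type 7, which is the content of the corollary. The only real obstacle is having in hand the construction of \S\ref{sec:build-hhsimple-like} that delivers ``$\breve{D}$ hhsimple inside each $R_i$''; this is precisely the combinatorial point singled out in the paragraph preceding the corollary and is not trivial, but it is established by the section cited, so no further work is needed here.
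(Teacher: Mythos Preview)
Your proposal is correct and follows exactly the route the paper intends: the corollary is not given a separate proof but is simply the observation that the paragraph preceding it (showing $\mathcal{L}^*(\breve{D})$ is a boolean algebra whenever $A$ is $\mathcal{D}$-maximal and $\breve{D}$ is hhsimple inside each $R_i$), combined with the Type~7 construction of \S\ref{sec:build-hhsimple-like} and the fact from Corollary~\ref{sec:hhsimple-like-typesBA} that $\breve{D}$ is c.e.\ in the Type~7 case, yields a hhsimple $\breve{D} = A \sqcup D_0$. Your appeal to Lemma~\ref{sec:types7} is harmless but unnecessary, since $D_0$ is already disjoint from $A$ by the definition of a generating set for $\mathcal{D}(A)$.
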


Again, we can ask what kind of split is needed. By
Lemma~\ref{sec:splits2} and \ref{sec:splits1}, it cannot be a trivial
or Friedberg splitting. Note that Corollary~\ref{sec:hk} as presented
is known, see \citet[Theorem~4.1 (1)]{MR1264963}. In fact, Herrmann
and Kummer prove something stronger; see
\S\ref{sec:herrm-kumm-result}. They also directly prove that these
splits cannot be trivial or Friedberg.

\subsection{Questions}\label{Q0}  First, it is natural to ask as we did in Question \ref{Q:alltypesdefinable} if all the Types are definable.
In a related vein, it is natural to wonder whether Types 7, 8, 9, and
10 should be further subdivided.  We construct the
$\mathcal{D}$-maximal sets of Types 7, 8, and 9 very uniformly; for
all $i$, $\bigcup_{j \leq i} D_i$ is hhsimple inside $R_i$ and, for
all $i \geq j$, $D_j \cap R_i$ is infinite and noncomputable. Perhaps
one could further divide Types 7, 8, and 9 into finer types determined
by whether $\breve{D}$ is hhsimple inside $R_i$ or not, or, whether
for all $i \geq j$, $D_j \cap R_i$ is infinite and noncomputable, or
not. It is far from clear if this is productive. We suggest that the
reader look at \S\ref{sec:build-hhsimple-like} before considering
these questions.

We also asked in Question \ref{Q:converseType10} whether Type 10 sets
must be splits of  $r$-maximal sets.

\section{$\mathcal{D}$-maximal sets of all Types
  exist}\label{sec:main-result}
The next theorem is the main result of the paper.
\begin{theorem}\label{sec:main-result-1}
  There are complete and incomplete $\mathcal{D}$-maximal sets of each
  Type. Moreover, for any $\mathcal{D}$-maximal set $A$,
  \begin{enumerate}
  \item $A$ is maximal iff $A$ is Type 1.
  \item\label{sec:main-result-1(2)} There is a computable set $R$ such
    that $A \cup R$ is maximal (i.e. $A$ maximal inside
    $\overline{R}$) iff $A$ is Type 2.
  \item\label{sec:main-result-1(3)} $A$ is hemimaximal iff $A$ is Type
    3.
  \item $A$ is Herrmann iff $A$ is Type 4.
  \item $A$ is hemi-Herrmann iff $A$ is Type 5.
  \item $A$ has an $A$-special list iff $A$ is Type 6.
  \end{enumerate}
  So, for each of the first six Types, the $\mathcal{D}$-maximal sets
  of that Type form a single orbit. The $\mathcal{D}$-maximal sets of
  each of the remaining four Types break up into infinitely many
  orbits.
\end{theorem}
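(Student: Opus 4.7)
The plan is to prove each of the characterizations (1)--(6) as a biconditional, then address existence of complete and incomplete examples and the orbit count for the remaining four Types.

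For the forward direction of (1), $\mathcal{D}$-maximality together with $\mathcal{D}(A)=\langle\emptyset\rangle$ immediately gives, for every c.e.\ $W$, that $W\subseteq^*A$ or $W\cup A=^*\omega$, so $A$ is maximal. For (2) and (3), I would use the single-generator characterizations from Lemma~\ref{sec:finitegen}: if $\mathcal{D}(A)=\langle R\rangle$ with $R$ computable, the $\mathcal{D}$-maximality condition collapses to maximality of $A\sqcup R$; if $\mathcal{D}(A)=\langle W\rangle$ with $W$ noncomputable, the same argument makes $A\sqcup W$ maximal, so $A$ is hemimaximal. Conversely, if $A$ is maximal then every c.e.\ set disjoint from $A$ is finite and $\mathcal{D}(A)=\langle\emptyset\rangle$; if $A\cup R$ is maximal with $R$ computable, Lemma~\ref{sec:splits2} gives $\mathcal{D}(A)=\langle R\rangle$; and if $A\sqcup B$ is maximal with $B$ noncomputable, the splitting is automatically Friedberg, so Lemma~\ref{sec:splits1} yields $\mathcal{D}(A)=\langle B\rangle$.

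For (4), (5), and (6), the forward direction reads the structural property off the generating set: a Type 4 generating set of pairwise disjoint computable $R_i$ certifies strong $r$-separability, making $A$ Herrmann; for Type 5, the lone noncomputable $D_0$ is a complementary half whose removal yields a Herrmann set; for Type 6, the pairwise disjoint noncomputable $D_i$, arranged as in Lemma~\ref{sec:all} to exhaust $\overline A$, form an $A$-special list. Conversely, I would unpack each definition to produce a generating set of the stated Type and then verify that no lower Type is possible using Lemmas~\ref{sec:computable2} and~\ref{sec:nested}. Existence of complete and incomplete examples for Types 1 through 6, and the fact that each of the first six Types forms a single orbit, come from the cited literature (\cite{M66b}, \cite{DSAutsOrbits}, \cite{SomeOrbits}, \cite{MR2366962}). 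For Types 7, 8, and 9 I would invoke the construction of \S\ref{sec:build-hhsimple-like}, and for Type 10 the construction of \S\ref{sec:type-10-sets}; in each case the construction can be carried out both with and without a permitting strategy to realize complete and incomplete degrees.

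The main obstacle is showing that Types 7, 8, 9, and 10 each break into infinitely many orbits. The plan is to exhibit a definable, hence automorphism-invariant, invariant that takes infinitely many values on $\mathcal{D}$-maximal sets of each Type. For Types 7--9, Corollary~\ref{sec:hhsimple-like-typesBA} associates to $A$ an infinite $\Sigma^0_3$ boolean algebra sitting inside $\mathcal{L}^*(\breve D)$ for $\breve D=A\cup\bigcup_i D_i$; the construction of \S\ref{sec:build-hhsimple-like} can be tuned so that $\mathcal{L}^*(\breve D)$ itself is a $\Sigma^0_3$ boolean algebra whose isomorphism type is preserved by any automorphism of $\mathcal{E}$, and by Theorem~\ref{T:Lachlan} this invariant can be chosen to realize infinitely many isomorphism types. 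For Type 10, Lemma~\ref{sec:type-10-dmax} embeds $A$ as a half of a splitting of an atomless $r$-maximal set $A\sqcup D_0$, and varying the orbit of $A\sqcup D_0$ among atomless $r$-maximal sets produces infinitely many orbits of Type 10 $\mathcal{D}$-maximal sets. The subtle point is verifying the automorphism-invariance of these structures, which is where the bulk of the remaining work lies.
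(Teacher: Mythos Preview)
Your treatment of parts (1)--(6) and of existence is essentially the paper's own argument: the single-generator cases via Lemmas~\ref{sec:finitegen}, \ref{sec:splits2}, \ref{sec:splits1}, and \ref{sec:first-six-partsfried}, and the reading of strong $r$-separability and $A$-special lists directly off the generating set, match \S\ref{firstsix} almost line for line.

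The genuine gap is in your plan for showing that Types 7, 8, and 9 split into infinitely many orbits. You propose to use the isomorphism type of $\mathcal{L}^*(\breve D)$, where $\breve D = A \cup \bigcup_i D_i$, as an automorphism invariant. But $\breve D$ is defined in terms of a \emph{choice} of generating set, and that choice is not canonical: even in the Type~7 case, replacing $D_0$ by $D_0 \cup R_0$ gives another legitimate Type~7 generating set with a different $\breve D$. So there is no a~priori reason for $\mathcal{L}^*(\breve D)$ to be preserved by automorphisms, and in fact the paper explicitly states in \S\ref{sec:anything-know-about} that it does \emph{not} know whether the associated boolean algebra is an invariant---only that automorphic sets have boolean algebras of ``similar'' rank. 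The paper's actual argument in \S\ref{sec:break-these-mathc} is weaker but sufficient: assuming $\Phi(\tilde A)=A$, one chases the image $\Phi(\tilde D_0)$ through finitely many generators of $\mathcal{D}(A)$ and deduces only that some tail $\bigoplus_{i>l}\mathcal{B}_{b_i}$ embeds as a subalgebra of $\tilde{\mathcal{B}}$ (Type~7), or the analogous statement for Types~8 and~9. One then separates orbits by Cantor--Bendixson rank, choosing $\mathcal{B}_j$ so that every $\mathcal{B}_{b_{j+1,i}}$ has rank exceeding that of $\mathcal{B}_j$. This is a strictly coarser invariant than isomorphism type, and that coarseness is forced by the non-canonicity of the generating set.

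Your Type~10 sketch is in the right spirit but is too loose as stated: ``the orbit of $A\sqcup D_0$'' is again not an invariant of $A$, since different choices of $D_0$ can yield different atomless $r$-maximal supersets. The paper's argument in \S\ref{sec:type-10-sets} handles this by quantifying over \emph{all} $B$ with $A_0^n \sqcup B$ atomless $r$-maximal, and using the $T^n$-embedding machinery of Cholak--Nies to rule out a $T^{n+1}$-embedding into $\mathcal{L}^*(A_0^n \sqcup B)$ for any such $B$.
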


In \S\ref{firstsix}, we show that there are $\mathcal{D}$-maximal sets
of each of the first six types by proving the stronger statement in
the corresponding subcase of Theorem \ref{sec:main-result-1}.  The
orbits of the first five Types are known to contain complete and
incomplete sets, so we only need to address the Type 6 case to finish
the proof of Theorem \ref{sec:main-result-1} for the first six Types.

In \S\ref{sec:type-10-sets} we present a construction of
$\mathcal{D}$-maximal sets of Type 10 (by taking advantage of prior
work).  We also show that these sets break into infinitely many orbits
and that they can be of any noncomputable c.e.\ set.
In \S\ref{sec:build-hhsimple-like}, we construct
hhsimple-like $\mathcal{D}$-maximal sets of any noncomputable c.e.\
degree, i.e., Type 7, 8, and 9 $\mathcal{D}$-maximal sets.
We also prove that these sets break up into infinitely many orbits by
defining a further invariant on each of these Types.
It remains open, however, whether every $\mathcal{D}$-maximal set of
one of the last four Types is automorphic to a complete set.

\subsection{The first six parts of
  Theorem~\ref{sec:main-result-1}}\label{firstsix}


Recall that $A$ is simple iff $\mathcal{D}(A) = \{ \emptyset \}$. So,
a simple set $A$ is $\mathcal{D}$-maximal iff for all $W$ either
$W \subseteq^* A$ or $W \cup A =^* \omega$ iff $A$ is maximal. Hence,
a $\mathcal{D}$-maximal set is Type 1 iff it is maximal. We will need
the following lemma:
\begin{lemma}[\citet{SomeOrbits}] \label{sec:first-six-partsfried}
  Every nontrivial splitting of a $\mathcal{D}$-maximal set is a
  Friedberg splitting.
\end{lemma}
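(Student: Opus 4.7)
The plan is to argue by contrapositive in a symmetric way: if $A = A_0 \sqcup A_1$ is a nontrivial splitting of a $\mathcal{D}$-maximal set $A$ and $W$ is a c.e.\ set for which (say) $W - A_0$ is c.e., I want to conclude that $W - A$ is c.e. The pivotal observation is that under this hypothesis I may replace $W$ by the smaller c.e.\ set $W' := W \cap \overline{A_0}$ without loss, because $A_0 \subseteq A$ gives $\overline{A} \subseteq \overline{A_0}$ and therefore $W' - A = W \cap \overline{A_0} \cap \overline{A} = W \cap \overline{A} = W - A$.

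Next I would invoke the working definition of $\mathcal{D}$-maximality (Definition \ref{def:Dmaxequiv}) applied to $W'$, obtaining a c.e.\ set $D$ disjoint from $A$ such that either (i) $W' \subseteq^* A \sqcup D$ or (ii) $W' \cup A \sqcup D =^* \omega$. In case (i), $W' - A \subseteq^* D$; combined with $W' - A \subseteq W'$ and $D \cap A = \emptyset$, this collapses to $W' - A =^* W' \cap D$, which is c.e., so $W - A$ is c.e.\ as required. In case (ii), intersect with $\overline{A_0}$: since $W' \subseteq \overline{A_0}$, $D \subseteq \overline{A} \subseteq \overline{A_0}$, and $A \cap \overline{A_0} = A_1$, one gets $\overline{A_0} =^* W' \cup A_1 \cup D$, a c.e.\ set. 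Hence $A_0$ is computable, contradicting the nontriviality of the splitting and eliminating case (ii) outright.

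The argument is fully symmetric in $A_0$ and $A_1$, so the same reasoning covers the possibility that $W - A_1$ is c.e. I do not foresee a real obstacle here: the only moment of cleverness is the up-front replacement $W \rightsquigarrow W \cap \overline{A_0}$, which positions $W'$ inside $\overline{A_0}$ precisely so that the second horn of the $\mathcal{D}$-maximality dichotomy expresses $\overline{A_0}$ as a c.e.\ union and forces $A_0$ to be computable. Everything else is bookkeeping with $=^*$ and $\subseteq^*$.
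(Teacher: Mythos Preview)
Your argument is correct. The paper itself does not supply a proof of this lemma; it is quoted from \cite{SomeOrbits} and used as a black box, so there is no in-paper proof to compare against. Your contrapositive reduction via $W' = W \cap \overline{A_0}$ followed by the two-case analysis from $\mathcal{D}$-maximality is exactly the natural route, and the handling of case~(ii) --- intersecting with $\overline{A_0}$ to force $\overline{A_0}$ to be c.e.\ --- is the standard way to exploit nontriviality.
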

By Lemmas~\ref{sec:splits2}, \ref{sec:splits1} and
\ref{sec:first-six-partsfried}, a set $A$ is $\mathcal{D}$-maximal and
$\{X\}$ generates $\mathcal{D}(A)$ iff for all sets $W$ either
$W \subseteq^* A \sqcup X $ or $W \cup (A \sqcup X) =^* \omega$ iff
$A \sqcup X$ is maximal. Hence, the first three subcases of
Theorem~\ref{sec:main-result-1} hold.


\begin{lemma}\label{DmaxType4}\

  \begin{itemize}
  \item[(i)] A set $A$ is $\mathcal{D}$-maximal and Type 4 iff $A$ is
    Herrmann.
  \item[(ii)] A set $A$ is $\mathcal{D}$-maximal and Type 5 iff $A$ is
    hemi-Herrmann.
  \end{itemize}
\end{lemma}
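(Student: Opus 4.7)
The plan is to prove (i) and then bootstrap (ii) from it.

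For the easy direction of (i) (Type 4 $\Rightarrow$ Herrmann), suppose $A$ is $\mathcal{D}$-maximal with Type 4 generating set $\{R_0, R_1, \ldots\}$, and let $B$ be c.e.\ and disjoint from $A$. Then $B \subseteq^* \bigsqcup_{i \in F} R_i$ for some finite $F$. Picking any $j \notin F$ and setting $C := \bigsqcup_{i \in F \cup \{j\}} R_i \cup (B \setminus \bigsqcup_{i \in F} R_i)$ gives a computable set (the last summand is finite) with $B \subseteq C \subseteq \overline{A}$; moreover $R_j \setminus B$ is infinite (since $R_j$ is infinite and almost disjoint from $B$), so $C \setminus B$ is infinite. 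This is strong $r$-separability, so $A$ is Herrmann.

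For the harder direction of (i) (Herrmann $\Rightarrow$ Type 4), I would apply strong $r$-separability (non-effectively) to every c.e.\ set disjoint from $A$ to obtain a collection of computable covers generating $\mathcal{D}(A)$, and then invoke Lemma~\ref{sec:computable} to extract a pairwise disjoint computable generating set. The main obstacle is to verify that this generating set is truly infinite---equivalently, that $A$ is not of Type 1, 2, or 3. Simpleness (Type 1) is incompatible with the existence of any infinite computable subset of $\overline{A}$, and if $\mathcal{D}(A) = \gen{W}$ were Type 2 or 3, then strong $r$-separability applied to $W$ itself would yield a computable $C \supseteq W$ disjoint from $A$ with $C \setminus W$ infinite; since $C \in \mathcal{D}(A)$ forces $C \subseteq^* W$, this is a contradiction. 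Note the word \emph{strongly} is essential here: plain $r$-separability would not rule out Types 2 and 3.

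For the forward direction of (ii), let $A$ be $\mathcal{D}$-maximal and Type 5 with generator $\{D_0, R_0, R_1, \ldots\}$. I would check that $\{R_0, R_1, \ldots\}$ generates $\mathcal{D}(A \sqcup D_0)$: any c.e.\ set disjoint from $A \sqcup D_0$ lies in $\mathcal{D}(A)$ and is almost-covered by $D_0 \cup \bigsqcup_{i \in F} R_i$, but being disjoint from $D_0$ it is covered by the $R_i$'s alone; the same absorption yields $\mathcal{D}$-maximality of $A \sqcup D_0$, and lower Types are ruled out since each $R_j$ is disjoint from both $D_0$ and the other $R_i$'s, precluding a finite generating set. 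Thus $A \sqcup D_0$ is Type 4, hence Herrmann by (i), and since $D_0$ is noncomputable, $A$ is hemi-Herrmann. Conversely, if $B$ witnesses that $A$ is hemi-Herrmann, then $A \sqcup B$ is Herrmann, so Type 4 by (i) with generating set $\{R_0, R_1, \ldots\}$; since $A \sqcup B$ is $\mathcal{D}$-maximal and both halves are noncomputable, Lemma~\ref{sec:first-six-partsfried} identifies the splitting of $A \sqcup B$ into $A$ and $B$ as Friedberg, and Lemma~\ref{sec:splits1} then yields that $\{B, R_0, R_1, \ldots\}$ generates $\mathcal{D}(A)$---a Type 5 generating set after ruling out lower Types by the same disjointness argument as above, with $\mathcal{D}$-maximality of $A$ inherited by absorbing $B$ into witnesses for $A \sqcup B$.
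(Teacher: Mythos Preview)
Your argument is essentially correct and close in spirit to the paper's, with one small gap in the reverse direction of (ii). After obtaining the generating set $\{B, R_0, R_1, \ldots\}$ for $\mathcal{D}(A)$, you say lower Types are ruled out ``by the same disjointness argument as above.'' But that earlier argument (infinitely many pairwise disjoint infinite generators cannot all be almost contained in a single $W$) only precludes a \emph{finite} generating set, i.e., Types 1--3; it does not touch Type 4. You still owe a line ruling out Type 4. Two easy fixes: invoke Lemma~\ref{sec:computable2} (if $\mathcal{D}(A)$ also had a pairwise disjoint computable generating set, then every set in the pairwise disjoint list $\{B, R_0, R_1, \ldots\}$ would be computable, contradicting noncomputability of $B$), or follow the paper and cite that Herrmann and hemi-Herrmann sets form distinct orbits, so a hemi-Herrmann set cannot be Type 4 by part (i).

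On approach: your backward direction of (i) is a mild variant of the paper's. You cover each c.e.\ set disjoint from $A$ by a computable set via strong $r$-separability and then invoke Lemma~\ref{sec:computable} to disjointify; the paper instead builds the pairwise disjoint computable list directly by induction, at step $n$ strongly $r$-separating $W_n \cup \bigsqcup_{i\le n} D_i$ from $A$ by some computable $C$ and setting $D_{n+1} = C \setminus \bigsqcup_{i\le n} D_i$. Both are fine; yours reuses an existing lemma, while the paper's is self-contained. In the forward direction of (i), you handle the $\subseteq^*$-versus-$\subseteq$ issue by adjoining the finite set $B \setminus \bigsqcup_{i\in F} R_i$, whereas the paper appeals to Lemma~\ref{sec:all} to get genuine containment; again both are valid. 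Finally, your forward direction of (ii) spells out what the paper calls ``straightforward,'' and your ruling out of Types 1--3 in (i) via strong $r$-separability applied to $\emptyset$ and to the putative generator $W$ is a nice alternative to the paper's one-line ``sets of those Types are not strongly $r$-separable.''
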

\begin{proof} (i) \ ($\Rightarrow$) Suppose $A$ is a
  $\mathcal{D}$-maximal Type 4 set.  We show that $A$ is strongly
  $r$-separable.  Let $B$ be a set disjoint from $A$.  By assumption
  and Lemma \ref{sec:all}, there exist pairwise disjoint computable
  sets $R_1, \ldots, R_n, R_{n+1}$ belonging to a generating set for
  $\mathcal{D}(A)$ such that $B\subseteq\bigsqcup_{1\le i\le n} R_i$.
  The computable set $C = \bigsqcup_{1\le i\le n+1} R_i$ witnesses
  that $A$ is strongly $r$-separable.  ($\Leftarrow$) Given a Herrmann
  set $A$, we inductively construct a Type 4 generating set for
  $\mathcal{D}(A)$ as follows.  Suppose $ D_0, D_1, \ldots, D_n $ are
  pairwise disjoint computable sets that are all disjoint from $A$. If
  $W_n$ is disjoint from $A$, set
  $D = W_n \cup \bigsqcup_{i \leq n} D_i$, and otherwise, set
  $D =\bigsqcup_{i \leq n} D_i$. Since $A$ is strongly $r$-separable,
  there exists a computable set $C$ such that $D\subseteq C$ and
  $C- D$ is infinite. Setting
  $D_{n+1} = C - \bigsqcup_{i \leq n} D_i=C\cap\overline{\bigsqcup_{i
      \leq n} D_i}$
  completes the construction.  Also note that $A$ is not Type 1, 2, or
  3, since sets of those Type are not strongly $r$-separable.

\noindent
(ii) By Lemmas~\ref{sec:splits1}, \ref{sec:first-six-partsfried}, and
\ref{DmaxType4} (i), the hemi-Herrmann sets are $\mathcal{D}$-maximal
of Type 5. The other direction is straightforward.  Recall that 
hemi-Herrmann  and Herrmann sets each form their own orbit. Hence a
hemi-Herrmann set cannot have Type 4.
\end{proof}


\begin{lemma}
  A set $A$ is $\mathcal{D}$-maximal and  Type 6   iff $A$ has an $A$-special list. \end{lemma}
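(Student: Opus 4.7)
I will prove the biconditional in two directions, echoing the structure used for Lemma~\ref{DmaxType4}.

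\emph{Forward direction.} Suppose $A$ is $\mathcal{D}$-maximal of Type $6$, and fix a Type 6 generating set $\{D_0, D_1, \ldots\}$ consisting of pairwise disjoint infinite noncomputable sets. By Lemma~\ref{sec:all}, I may assume $\overline{A} = \bigsqcup_{i} D_i$. Define $F_0 := A$ and $F_{i+1} := D_i$; these are pairwise disjoint noncomputable c.e.\ sets with $F_0 = A$. Given any c.e.\ $W$, $\mathcal{D}$-maximality provides a c.e.\ set $D$ disjoint from $A$ with $W \subseteq^* A \sqcup D$ or $W \cup (A \sqcup D) =^* \omega$. Since $\{D_j\}$ generates $\mathcal{D}(A)$, there is $k$ with $D \subseteq^* \bigsqcup_{j < k} D_j$, and hence $A \sqcup D \subseteq^* \bigsqcup_{l \le k} F_l$. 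This verifies the $A$-special list property at index $i = k$.

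\emph{Backward direction.} Suppose $\{F_i\}$ is an $A$-special list, and set $D_j := F_{j+1}$. The $D_j$ are pairwise disjoint infinite noncomputable sets, all disjoint from $A = F_0$. First, $A$ is $\mathcal{D}$-maximal: given c.e.\ $W$, take the witness $i$ from the $A$-special list and set $D := \bigsqcup_{j < i} D_j = \bigsqcup_{l=1}^{i} F_l \in \mathcal{D}(A)$; the $A$-special list clause then yields precisely $W \subseteq^* A \sqcup D$ or $W \cup (A \sqcup D) =^* \omega$, verifying Definition~\ref{def:Dmaxequiv}. Next, I show $\{D_0, D_1, \ldots\}$ generates $\mathcal{D}(A)$: for any c.e.\ $D$ disjoint from $A$, the $A$-special list applied to $W = D$ provides $i$ with either $D \subseteq^* \bigsqcup_{l \le i} F_l$ or $D \cup \bigsqcup_{l \le i} F_l =^* \omega$. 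In the first subcase, disjointness $D \cap A =^* \emptyset$ immediately yields $D \subseteq^* \bigsqcup_{j < i} D_j$. The cover subcase is the main obstacle: here $F_j \subseteq^* D$ for all $j > i$, and I plan to show this configuration is incompatible with $D$ being c.e.\ and disjoint from $A$ by applying the $A$-special list property to further judiciously chosen c.e.\ sets (e.g.\ $D \sqcup A$ or intersections of the form $D \cap W'$) and exploiting the noncomputability of the $F_l$'s.

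\emph{Type exactly $6$.} Finally, I verify that $A$ is of Type exactly $6$, not of some lower Type. The presence of the infinite noncomputable $F_1 \in \mathcal{D}(A)$ rules out Types 1 and 2; a pairwise disjoint infinite collection of noncomputable sets in $\mathcal{D}(A)$ rules out Types 4 and 5 (at most one noncomputable generator cannot cover infinitely many pairwise disjoint noncomputable sets); and for Type 3, applying the $A$-special list to the putative single generator $W$ (noting $F_j \subseteq^* W$ for all $j \ge 1$) forces $\overline{W} \subseteq^* A$ mod finite, which combined with $A$ c.e.\ makes $A$ computable, a contradiction. Thus $A$ is Type~$6$. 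The main obstacle is the cover-case analysis in the proof that $\{D_j\}$ generates; the remaining steps are routine unwinding of definitions.
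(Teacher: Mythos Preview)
Your forward direction matches the paper's. In the backward direction you have misidentified the difficulty: the cover subcase you call ``the main obstacle'' is in fact a one-line observation, and it is exactly the paper's argument. If $D$ is c.e., disjoint from $A=F_0$, and $D \cup \bigsqcup_{l\le i} F_l =^* \omega$, then since each $F_l$ with $l\ge 1$ is also disjoint from $A$, the c.e.\ set $D \cup \bigsqcup_{1\le l\le i} F_l$ is almost equal to $\overline{A}$; hence $A$ is computable, contradicting that $F_0=A$ is noncomputable. No ``further judiciously chosen'' sets are needed. Once this is in hand, the first subcase is the only one, and $\{F_1,F_2,\ldots\}$ generates $\mathcal{D}(A)$ immediately.

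Your Type~3 argument as written does not go through: from $F_j\subseteq^* W$ for each $j\ge 1$ you cannot conclude $\overline{W}\subseteq^* A$, since the $F_j$ need not exhaust $\overline{A}$. The correct step (using the cover-case observation above) is that the single generator $W$, being disjoint from $A$, satisfies $W\subseteq^* \bigsqcup_{1\le l\le i} F_l$ for some $i$; then $F_{i+1}$ is infinite, in $\mathcal{D}(A)$, and almost disjoint from $W$, contradicting $\mathcal{D}(A)=\langle W\rangle$. Your Type~4/5 exclusion (``at most one noncomputable generator cannot cover infinitely many pairwise disjoint noncomputable sets'') is not a proof as stated; it can be salvaged via the idea of Lemma~\ref{sec:computable2}, but the paper avoids all of this combinatorics. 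Instead it invokes the prior results that sets with $A$-special lists form a single definable orbit, distinct from the (already characterized) orbits of Types~1--5, together with the fact that Type is an orbit invariant (Lemma~\ref{sec:gener-sets-simple}(ii)). That route is shorter and sidesteps the case analysis entirely.
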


\begin{proof} ($\Leftarrow$) Note that if $\{ A, D_0, D_1, \ldots \}$
  is an $A$-special list and a set $W$ is disjoint from $A$, then
  $W \subseteq^* \bigsqcup_{ l\leq i}D_l$.  Otherwise, the condition
  $A\sqcup(W \cup \bigsqcup_{l\leq i}D_l) =^* \omega$ would hold,
  implying that $A$ would be computable.  ($\Rightarrow$) Given a
  c.e.\ set $W$, either $W\subseteq^* A\sqcup D$ or
  $W\cup (A\sqcup D)=^*\omega$ for some c.e.\ set $D$ disjoint from
  $A$ by $\mathcal{D}$-maximality.  The set $D$ is contained in
  finitely many sets from the Type 6 generating set.  So, a
  $\mathcal{D}$-maximal set $A$ has a Type 6 generating set
  $\{ D_0, D_1, \ldots \}$ iff $\{ A, D_0, D_1, \ldots \}$ is an
  $A$-special list.  Recall that maximal, hemimaximal, Herrmann, and hemi-Herrman sets, as well as sets
with $A$-special lists form distinct definable orbits (see
\S\ref{S:introexDmax}), and that Types of generating sets are invariant.  These facts imply the result. \end{proof}

  Although it was previously shown that there
are complete Herrmann and hemi-Herrmann sets, it is not explicitly
shown in \citet{MR2366962} that a complete or incomplete set with an
$A$-special list exists.  In Remark~\ref{degree}, we discuss how the
construction found in \cite{MR2366962} of sets with $A$-special lists
can be modified to ensure the resulting set is complete or incomplete.

\subsection{$\mathcal{D}$-maximal sets of Type 10 and atomless
  $r$-maximal sets}\label{sec:type-10-sets}

Lerman and Soare constructed an atomless $r$-maximal set $A$ and a
nontrivial splitting $A_0 \sqcup A_1 = A$ so that $A_1 \cup (W - A)$
is c.e.\ for every coinfinite $W\in \mathcal{L}^*(A)$
in\cite[Theorem~2.15]{Lerman.Soare:80}.  Herrmann and Kummer proved
that such a split $A_0$ is $\mathcal{D}$-maximal
\cite[Proposition~4.5]{MR1264963}.  We need the following lemma:

\begin{lemma}\label{sec:atomless2}
  If a noncomputable set $A$ is half of a splitting of an atomless
  set, then $A$ is not half of a splitting of a maximal
  set. \end{lemma}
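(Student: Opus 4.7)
The plan is to argue by contradiction: suppose $A$ is noncomputable, $A\sqcup A_1=B$ with $B$ atomless, and $A\sqcup A_1'=M$ with $M$ maximal, and derive a contradiction. The main tool is the maximality of $M$ applied to the single c.e.\ set $B\cup M$, which, since $B\cup M\supseteq M$, must satisfy either $B\cup M=^*M$ or $B\cup M=^*\omega$. I would then handle these two cases separately.

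In the first case, $B\subseteq^*M$, so $B\subseteq M\cup F$ for some finite $F$. Because maximality is preserved under finite modification, $M\cup F$ is a maximal c.e.\ superset of $B$, directly contradicting the atomlessness of $B$. This step is routine.

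The second case is where all of the bookkeeping lives. From $B\cup M=^*\omega$ I get $\overline{M}\subseteq^* B$. Since $A\subseteq M$, the set $\overline{M}$ is disjoint from $A$, and combined with $\overline{M}\subseteq^* A\sqcup A_1$ this yields $\overline{M}\subseteq^* A_1$. I would then exploit the two disjoint decompositions of $\overline{A}$ coming from the two splittings, namely
\begin{equation*}
  \overline{A}=A_1\sqcup\overline{B}=A_1'\sqcup\overline{M}.
\end{equation*}
Substituting $\overline{M}\subseteq^* A_1$ into the second decomposition gives $\overline{A}\subseteq^* A_1'\cup A_1$; the reverse almost-inclusion is automatic because $A_1$ and $A_1'$ are both disjoint from $A$. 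Hence $\overline{A}=^*A_1\cup A_1'$, a c.e.\ set, forcing $A$ to be computable and contradicting the standing hypothesis.

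The argument is essentially entirely bookkeeping once one notices the right c.e.\ set to test against $M$'s maximality, so I do not anticipate a real obstacle; the only subtle point is using the noncomputability of $A$ to close Case 2, since without it the conclusion $\overline{A}=^*A_1\cup A_1'$ would be harmless. The whole proof should be only a few lines once the case split is made.
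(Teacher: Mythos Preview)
Your proposal is correct and follows essentially the same approach as the paper: both apply maximality of $M$ to $B\cup M$, dismiss the case $B\subseteq^* M$ via atomlessness, and in the remaining case conclude that $\overline{A}=^* A_1\cup A_1'$ is c.e., contradicting noncomputability of $A$. The paper just compresses your Case~2 into the single line $A\sqcup(A_1\cup A_1')=^*\omega$, but the content is identical.
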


\begin{proof}
  Assume that $A \sqcup A_1$ is an atomless set and $A \sqcup A_2$ is
  maximal. Since $A \sqcup A_1$ is atomless, $A\sqcup A_1$ cannot be
  (almost) a subset of $A\sqcup A_2$. Since $A \sqcup A_2$ is
  maximal, $(A \sqcup A_2) \cup (A \sqcup A_1) =^* \omega$.
  Therefore, $A \sqcup (A_1 \cup A_2) =^* \omega$, and $A$ is
  computable. \end{proof}

By Lemma~\ref{sec:atomless2} 
and the first 3 subcases of Theorem \ref{sec:main-result-1}, $A_0$
does not have Type 1, 2 or 3.  Therefore, by
Lemma~\ref{sec:notsplitrmaxi}, $A_0$ is in fact a Type 10
$\mathcal{D}$-maximal set.

The construction of Lerman and Soare is a version of John Norstad's
construction (unpublished) that has been modified several times (see
\cite[Section X.5]{Soare:87}).  Here we briefly discuss how to alter
the construction in \citet[Section~2]{mr2001a:03087} to directly show
that 
$A_0$ is $\mathcal{D}$-maximal.
For the remainder of this section, we assume that the reader is
familiar with \cite{mr2001a:03087}.

As we enumerate $A$, we build the splitting $A =A_0 \sqcup A_1$.  All
the balls that are \emph{dumped} by the construction are added to
$A_1$.  Since $A_0$ would be empty without any other action, we add
requirements $\mathcal{S}_e$ to ensure that $A_0$ is not computable.
Specifically, we have
\begin{equation*}
  \tag*{$\mathcal{S}_{e}$:}
  W_e \neq \overline{A_0}.
\end{equation*}
We say that $\mathcal{S}_e$ is met at stage $s$ if there is an
$x \leq s$ such that $\varphi_{e,s}(x) = 1$ but $x \in A_{0,s}$.  We
also add a Part
III  to the construction in \cite[Construction~2.5]{mr2001a:03087}.\\

\noindent
\emph{Part III}: Let $x = d^s_{\langle e, 0 \rangle}$.  If
$\mathcal{S}_e$ is met or $x$ has already been dumped into $A$ at
stage $s$, do nothing.  Otherwise, if $\varphi_{e,s}(x) = 1$, add $x$
to
$A_0$ and realign the markers as  done in Parts I and II.\\

It straightforward to show that $\mathcal{S}_e$ is met and that Part
III does not impact the rest of the construction.  So, it is left to
show that $A_0 $ is $\mathcal{D}$-maximal. 
By requirement $P_e$
and \cite[Lemma~2.3]{mr2001a:03087}, either $W_e
\subseteq^*H_e$ or $\overline{A} \subseteq^*
W_e$.  In the latter case, $A_0 \cup A_1 \cup W_e =^*
\omega$.  So assume that $W_e \subseteq^*
H_e$.  It is enough to show that $W_e
-A_0$ is a c.e.\ set.  \cite[Definition 2.9, Lemma
2.11]{mr2001a:03087} provides a c.e.\ definition of $H_e$.
To guarantee that $W_e
-A_0$ is c.e., we have to slightly alter the definition of
$s'$
in \cite[Definition 2.9]{mr2001a:03087}.  In particular, choose
$s'$
so that if $\mathcal{S}_i$
will be met at some stage, then it is met by stage $s'$,
for all $i
\leq e$.  This change at most increases the value of
$s'$.
This alteration in $s'$,
\cite[Lemma 2.10]{mr2001a:03087}, and the construction together imply
that $H_e \searrow A_0$ is empty.  Since $W_e \subseteq^* H_e$,
 $$((H_e \setminus A_0)\cap W_e) \cup
 (W_e \cap A_1) =^* W_e -
 A_0.$$ Hence, $W_e - A_0$ is c.e.\ as required.

 This construction of $A_0$
 clearly mixes with finite restraint; rather than using $d^s_{\langle
   e, 0 \rangle}$ for $\mathcal{S}_e$ use the least $d^s_{\langle e, j
   \rangle}$ above the restraint.
 To code any noncomputable c.e.\ set $X$
 into $A_0$,
 we have to alter the dumping slightly.  If a ball $x=
 d^s_{\langle e, 0 \rangle}$ is dumped into
 $A$,
 always add it to $A_0$.
 All other dumped balls go into $A_1$.
 Now if $e$
 enters $X$
 at stage $s$,
 also add $d^s_{\langle
   e, 0 \rangle}$ into $A_0$.  It is not hard to show that
 $A_0$
 computes $X$,
 just alter the above $s'$
 in the c.e.\ definition of $H_e$
 so that $X
 \restriction e+1 = X_{s'} \restriction
 e+1$).  Since these versions of coding and finite restraint mix, we
 can construct $A$ of any noncomputable c.e.\ degree.

 Cholak and Nies \cite[Section 3]{mr2001a:03087} go on to construct
 infinitely many atomless $r$-maximal sets $A^n$ that all reside in
 different orbits.  We use the ideas there together with our modified
 construction to obtain $A^n = A^n_0 \sqcup A^n_1$.  We claim that the
 sets $A^n_0$ also fall into infinitely many distinct orbits. Assume
 that $A^n_0 \sqcup B$ is an atomless $r$-maximal set. Since $A^n_0$
 is not computable, $A^n_0 \sqcup (A^n_1 \cup B ) \neq^* \omega$. A
 $T^{n+1}$-embedding of $\mathcal{L}^*(A^{n+1})$ into
 $\mathcal{L}^*(A^n_0 \sqcup (A^n_1 \cup B ))$ would provide a
 $T^{n+1}$-embedding of $\mathcal{L}^*(A^{n+1})$ into
 $\mathcal{L}^*(A^n_0 \sqcup A^n_1)$.  By \cite[Lemma~3.5,
 Theorem~3.6]{mr2001a:03087}, the latter cannot exisit so neither can
 the former.
 In $\mathcal{L}^*(A^n_0 \sqcup A^n_1)$, $B$ is contained by some
 $H^n_e$, where $e= i_{0^m}$, for some $m$ (see
 \cite[Theorem~2.12]{mr2001a:03087}).
 By definition of $T^n$, the tree above $0^m$ is isomorphic to $T^n$.
 So there is a $T^{n}$-embedding of $\mathcal{L}^*(A^n)$ into
 $\mathcal{L}^*(A^n_0 \sqcup (A^n_1 \cup B))$ and hence into
 $\mathcal{L}^*(A^n_0 \sqcup B)$.  Thus, none of the $A^n_0$ belong to
 the same orbit.

\section{Building hhsimple-like $\mathcal{D}$-maximal
  sets}\label{sec:build-hhsimple-like}
We continue with the proof of Theorem \ref{sec:main-result-1}.  We
construct $\mathcal{D}$-maximal sets of Types 7, 8, and 9 and show
that the collection of sets of each of these Types breaks up into
infinitely many orbits.

In \S\ref{sec:types-7-8-9}, we discussed how sets of Types 7, 8, and 9
are like hhsimple sets.  Lachlan's construction in the second half of
Theorem \ref{T:Lachlan} serves as the backbone of our constructions,
but we also use it modularly within these constructions.
Our approach is to treat this theorem as a blackbox.

In \S\ref{SS:Types7-9Overview}, we describe how to construct a set $H$
that is close to being hhsimple and is associated with a boolean
algebra with a particularly nice decomposition.  In \S
\ref{SS:requirements}, we add requirements ensuring that the
construction in \S\ref{SS:Types7-9Overview} results in a hhsimple-like  set
with a $\mathcal{D}$-maximal split of Type 7, 8, or 9.
\subsection{Herrmann and Kummer's Result}\label{sec:herrm-kumm-result}

It is important to note that Herrmann and Kummer \cite[Theorem~4.1
(1)]{MR1264963} already constructed $\mathcal{D}$-maximal splits of
hhsimple sets.  In fact, their result is stronger than the result
presented here, in the sense that, given any infinite $\Sigma_3^0$
boolean algebra $\mathcal{B}$, they provide a construction of a
$\mathcal{D}$-maximal split of a hhsimple set of flavor $\mathcal{B}$.
Although Herrmann and Kummer show that their split of a hhsimple is,
in our language, not of Type 1, 2, or 3, they do not further
differentiate between sets of Type 7, 8, or 9.  Furthermore, they do
not show that the collections of such sets break into infinitely many
orbits, as we do here.

The proof of \cite[Theorem~4.1 (1)]{MR1264963} is rather difficult and
spans several papers, including \cite{MR884722} and
\cite{MR1034561}. These papers together provide a fine analysis of
Lachlan's result and of decompostions of infinite boolean algebras.
This analysis is in terms of $\Sigma^0_3$ ideals of $2^{< \omega}$,
and the proof of \cite[Theorem~4.1 (1)]{MR1264963} divides into three
cases based on
structural properties of the given $\Sigma^0_3$ ideal.

We claim it is possible to obtain Herrmann and Kummer's result via a
modification of the construction below by translating their work into
the language of boolean
algebras.  
However, since this general approach would increase the complexity of
the proof and our goals are different, we focus on sets corresponding
to boolean algebras with especially nice decompositions.

%

\subsection{Background on Small Major Subsets}\label{sec:smallness1}

We need some background on smallness and majorness for our
construction.  These notions will be used in \S\ref{SS:Type9SM} and
\S\ref{sec:smallness}. One can delay reading this section until then.

Smallness and majorness were introduced by Lachlan in
\cite{Lachlan:68*2} and further developed in \cite{Stob:79}. See also
\cite[X.4.11]{Soare:87}, \cite{Maass.Stob:83}, and
\cite{mr2004f:03077} for more on these concepts.

\begin{definition} Let $B$ be a c.e.\ subset of a c.e.\ set $A$.  We
  say that $B$ is a \emph{small} subset of $A$ if, for every pair of
  c.e.\ sets $X$ and $Y$, $X \cap (A-B) \subseteq^* Y$ implies that
  $Y \cup (X-A)$ is a c.e.\ set.
\end{definition}

\begin{definition} Let $C$ be a c.e.\ subset of a c.e.\ set $B$.  We
  say that $C$ is \emph{major} in $B$, denoted $C \subseteq_m B$, if
  $B - C$ is infinite and for every c.e.\ set $W$, the containment
  $\overline{B} \subseteq^* W$ implies $\overline{C} \subseteq^* W$.
\end{definition}

We need the following straightforward results about small major
subsets.  Note that any c.e.\ subset of computable set is small in the
computable set.

\begin{lemma}\label{stob1}\
  Let $E$ and $F$ be subsets of $D$, and let $R$ be a computable set.
  \begin{enumerate}
  \item\label{S11} (Stob \cite{Stob:79}) Suppose $E$ is small in $D$.
    If $D \subseteq \Dhat$, then $E$ is small in $\Dhat$.  Similarly,
    if $\Ehat \subseteq E$, then $\Ehat$ is small in $D$.
  \item\label{S12} (Stob \cite{Stob:79}) If $E$ is small in $D$, then
    $E \cap R$ is small in $D\cap R$.

  \item\label{S26} If $E$ is major in $D$, then $E\cap R=^*D\cap R$ or
    $E\cap R$ is major in $D\cap R$.

  \item\label{S24} If $F$ is major in $E$ and $E$ is major in $D$,
    then $F$ is major in $D$.

  \item\label{S23} If $E$ is major in $D$ then $E$ is simple inside
    $D$.

  \item\label{S25} If $E$ is major in $D$ and $D$ is hhsimple, then
    every hhsimple superset of $E$ contains $D$.

  \end{enumerate}
\end{lemma}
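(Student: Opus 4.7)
The plan is to treat the six claims in order, working directly from the definitions of smallness and majorness. Claims (1) and (2) are attributed in the statement to Stob \cite{Stob:79}, and I will simply cite that reference; the arguments there are short diagram chases using the definition of smallness and the fact that intersection with a computable set preserves the c.e.\ property of the relevant differences.

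Claims (3) and (4) follow directly by unpacking majorness. For (4) (transitivity), $D - F = (D-E)\sqcup(E-F)$ is infinite by the two majorness hypotheses, and for any c.e.\ $W$ with $\overline{D}\subseteq^* W$, $E\subseteq_m D$ first gives $\overline{E}\subseteq^* W$, and then $F\subseteq_m E$ gives $\overline{F}\subseteq^* W$. For (3), the key observation is $\overline{D\cap R}=\overline{D}\cup\overline{R}$. Assuming $(D-E)\cap R$ is infinite (equivalently $E\cap R\neq^* D\cap R$) and $W$ is c.e.\ with $\overline{D\cap R}\subseteq^* W$, I read off both $\overline{D}\subseteq^* W$ and $\overline{R}\subseteq^* W$; the first plus $E\subseteq_m D$ yields $\overline{E}\subseteq^* W$, so $\overline{E\cap R}=\overline{E}\cup\overline{R}\subseteq^* W$, proving $E\cap R\subseteq_m D\cap R$.

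Claim (5), that $E\subseteq_m D$ implies $E$ is simple inside $D$, is the classical Lachlan--Robinson theorem (Soare \cite[Theorem X.4.3]{Soare:87}), which I will cite rather than reprove. The standard argument takes any purported infinite c.e.\ $W\subseteq D-E$ and uses a movable-marker enumeration of $W$ to build a c.e.\ set $V$ that almost covers $\overline{D}$ but misses infinitely many elements of $W$; such a $V$ witnesses a failure of majorness of $E$ in $D$. This construction is the most delicate ingredient in the lemma, which is why I choose to import it via citation.

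Finally, for claim (6) the crucial input is that $F$ (not $D$) is hhsimple. By Theorem \ref{T:Lachlan}, $\mathcal{L}^*(F)$ is a boolean algebra, so the element $D\cup F\in\mathcal{L}^*(F)$ has a complement there: there is a c.e.\ set $G\supseteq F$ with $(D\cup F)\cup G=^*\omega$ and $(D\cup F)\cap G=^* F$. The first identity gives $\overline{D}\subseteq^* G$, so by majorness of $E$ in $D$ we obtain $\overline{E}\subseteq^* G$; combined with $E\subseteq F\subseteq G$ this forces $G=^*\omega$. Substituting back into $(D\cup F)\cap G=^* F$ then yields $D\cup F=^* F$, i.e.\ $D\subseteq^* F$, as required. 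The conceptual subtlety here is recognizing that it is $F$'s (not $D$'s) hhsimplicity that produces this witnessing $G$.
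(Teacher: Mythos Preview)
Your arguments for (1)--(4) and (6) are correct and essentially identical to the paper's; in particular, your observation that only the hhsimplicity of the superset $F$ (not of $D$) is used in (6) matches the paper's own proof, which likewise never invokes the hypothesis that $D$ is hhsimple.

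The one notable divergence is (5). You import it as the ``Lachlan--Robinson theorem'' and sketch a movable-marker construction, calling it ``the most delicate ingredient in the lemma.'' The paper instead gives a two-line elementary argument: if $W\subseteq^* D-E$ is an infinite c.e.\ set, pass to an infinite computable $R\subseteq^* D-E$; then $\overline{R}$ is c.e., $\overline{D}\subseteq^*\overline{R}$ (since $R\subseteq^* D$), but $\overline{E}\not\subseteq^*\overline{R}$ (since $R\subseteq^*\overline{E}$ is infinite), contradicting majorness. No priority or marker machinery is needed. Your citation is not wrong, but the ingredient you flag as delicate is in fact the easiest of the six, and your description of the ``standard argument'' overshoots considerably.
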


\begin{proof}
  (\ref{S11}), (\ref{S12}) The proofs of these statements can be found
  in \cite{mr2004f:03077}.
  \noindent
  (\ref{S26}) 
  If $\overline{D\cap R}=\overline{D}\cup\overline{R}\subseteq^* W$,
  then $\overline{E}\cup\overline{R}=\overline{E\cap R}\subseteq^* W$.

  \noindent
  (\ref{S24}) If $\overline{D} \subseteq^* W$, then
  $\overline{E} \subseteq^* W$ and, hence,
  $\overline{F} \subseteq^* W$.

  \noindent
  (\ref{S23}) Suppose that there is an infinite c.e.\ set
  $W \subseteq^* (D- E)$. Then, there is an infinite computable set
  $R \subseteq^* (D-E)$ such that
  $\overline{D} \subseteq^* \overline{R}$ but
  $\overline{E} \nsubseteq^* \overline{R}$.

 \noindent
 (\ref{S25}) Let $H$ be a hhsimple superset of $E$. Then, there is a
 c.e.\ set $W$ such that $H \subseteq^* W$, $D \cup W = \omega$, and
 $W \cap D \subseteq^* H$. So, $\overline{D} \subseteq^* W$. If $D-H$
 is infinite, $\overline{E} \nsubseteq^* W$, a contradiction. So,
 $D \subseteq^* H$.

\end{proof}

The following theorem by Lachlan will be very useful:

\begin{theorem}[\citet{Lachlan:68*2} (also see {\cite[X
    4.12]{Soare:87}})]
  \label{sec:smallmajorsubset}
  There is an effective procedure that, given a noncomputable c.e.\
  set $W$, outputs a small major subset of $W$.
\end{theorem}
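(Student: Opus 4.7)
The plan is to run a classical Lachlan-style movable-marker / $e$-state priority construction, uniformly in an index for the given noncomputable c.e.\ set $W$. We enumerate $M$ in stages as a c.e.\ subset of $W$. The requirements are the majorness conditions
\[
\mathcal{R}_e \colon \overline{W}\subseteq^* W_e \Rightarrow \overline{M}\subseteq^* W_e,
\]
together with keeping $W - M$ infinite. Define the $e$-state of $x$ at stage $s$ to be $\sigma_e(x,s) = \{i \le e : x \in W_{i,s}\}$; this is monotone nondecreasing in $s$. At each stage $s$ we attempt to enumerate the least $x \in W_s \setminus M_s$ with $x \le s$ into $M$, and we do so only when, for every $e \le x$ not yet certified as satisfying $\overline{W} \not\subseteq^* W_e$, we have $x \in W_{e,s}$; an index $e$ gets certified out by observing sufficiently many elements of $\overline{W_{e,s}}$ appearing in the current approximation to $\overline{W}$. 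Movable markers placed on $\overline{W}$ guarantee that $W - M$ remains infinite.

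Verifying majorness is straightforward. If $\overline{W}\subseteq^* W_e$, then $e$ is never certified out, so from some stage on every $x \in W$ that is enumerated into $M$ lies in $W_e$; consequently $W - M \subseteq^* W_e$, and hence $\overline{M}\subseteq^* W_e$. The effectivity of the construction is transparent from its description, so we obtain an index for $M$ uniformly from an index for $W$, as required by the statement.

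The heart of the proof, and the main obstacle, is verifying smallness. Given c.e.\ sets $X$ and $Y$ with $X \cap (W - M) \subseteq^* Y$, we must show $Y \cup (X - W)$ is c.e. The idea is that the construction's decision to hold an element $x$ out of $M$ is \emph{locally certifiable}: $x \in W - M$ can occur only when $x$ sits at a marker position whose ``target'' $e$-state $x$ never attains, and this circumstance manifests itself as an observable limiting feature of the enumeration. To enumerate $Y \cup (X - W)$ as a c.e.\ set, we enumerate $Y$ together with every $x \in X_s$ whose hypothetical entry into $W$ at stage $s$ would, by the construction's rule, have caused $x$ to be enumerated into $M$. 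The hypothesis $X \cap (W - M) \subseteq^* Y$ absorbs into $Y$ the finitely many elements of $X \cap (W - M)$ that slip through this effective approximation, leaving only a finite discrepancy with $Y \cup (X - W)$. The delicate part is arranging the construction so that membership in $M$ is predictable from the c.e.\ data available; this is exactly where Lachlan's careful synchronization of movable markers with $e$-states does the essential work, and it is the point at which the proof genuinely requires bookkeeping beyond the standard maximal set template.
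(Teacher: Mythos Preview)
The paper does not prove this theorem; it is quoted from Lachlan (with a pointer to Soare~X.4.12) and used as a black box, so there is no paper proof to compare against. Your sketch, however, has real gaps.

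The majorness argument is backwards. As you describe it, $x$ is enumerated into $M$ \emph{only when} $x\in W_{e,s}$ for every uncertified $e\le x$. If $e$ is never certified out, this yields $M\subseteq^* W_e$, not $W-M\subseteq^* W_e$; the sentence ``consequently $W-M\subseteq^* W_e$'' is a non sequitur. Indeed, under your rule the elements you \emph{refuse} to put into $M$ are exactly those missing from some uncertified $W_e$, so $W-M$ can meet $\overline{W_e}$ infinitely. The Lachlan marker construction does the opposite: it dumps into $M$ the elements of \emph{low} $e$-state and retains in $W-M$ those of high $e$-state, so that the survivors lie in every $W_e$ almost containing $\overline{W}$. (Relatedly, ``movable markers placed on $\overline{W}$ guarantee that $W-M$ remains infinite'' cannot be right as stated, since $W-M\subseteq W$; the markers must sit on $W$.)

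The smallness sketch does not work either. Your proposed c.e.\ set enumerates, besides $Y$, every $x\in X$ whose hypothetical entry into $W$ would send it to $M$. But then every $x\in X\cap M$ is enumerated whether or not $x\in Y$, and $X\cap M\setminus Y$ need not be finite, so you over-enumerate; conversely, an $x\in X\setminus W$ that never attains the requisite $e$-state is not enumerated, so you under-enumerate. The hypothesis $X\cap(W-M)\subseteq^* Y$ controls only $X\cap(W-M)$, not these two error sets. In Lachlan's actual construction the smallness verification depends on a specific synchronization: an element leaves the current $W-M$ approximation only when a marker moves for a reason that is visible and monotone in stages, and it is this feature that lets one c.e.-approximate $X\setminus W$ modulo $Y$. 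Your outline names this as ``the delicate part'' but does not supply the mechanism, and with the construction oriented as you have it, no such mechanism is available.
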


\subsection{Construction overview}\label{SS:Types7-9Overview}
We now construct $\mathcal{D}$-maximal sets $A$ of Type 7, Type 8, and
Type 9.  These constructions are very similar to the construction of
splits of a hhsimple set $H$ of a given kind of flavor $\mathcal{B}$.
Let $\mathcal{B}$ be a $\Sigma_3^0$ boolean algebra with infinitely
many pairwise incomparable elements.  We call a subset
$\{b_i\}_{i\in\omega}$ of $\mathcal{B}$ a \emph{skeleton} for
$\mathcal{B}$ if the elements in $\{b_i\}_{i\in\omega}$ are pairwise
incomparable and, for every element of $\mathcal{B}$, either it or its
complement is below the join of finitely many elements in
$\{b_i\}_{i\in\omega}$.  If $\{b_i\}_{i\in\omega}$ is a skeleton for
$\mathcal{B}$ and $\mathcal{B}_{b}:= \mathcal{B}\restriction [0, b]$
for any $b\in\mathcal{B}$, then
$\mathcal{B} = \join_{i\in\omega} \mathcal{B}_{b_i}$.

For the remainder of \S\ref{sec:build-hhsimple-like}, we fix an
arbitrary $\Sigma_3^0$ boolean algebra $\mathcal{B}$ that has a
computable skeleton $\{b_i\}_{i\in\omega}$.  We show how to construct
a set $H$ that is hhsimple (or, at first, close to hhsimple) with
flavor $\mathcal{B}$.  (Our construction can be made to work for any
boolean algebra with a $\mathbf{0''}$-computable skeleton, but the
added complexity does not gain us a sufficiently better result.)

In the Type 7 case, we intend to build a splitting $H=A\sqcup D$ so
that $\mathcal{D}(A)$ is generated by $D=\bigsqcup_{i\in\omega} H_i$
and a list $\{R_i\}_{i\in\omega}$ of pairwise disjoint infinite
computable sets where $H_i\subset R_i$ has flavor $\mathcal{B}_{b_i}$
inside $R_i$ for all $i\in\omega$.
Specifically, we build these objects via Lachlan's construction
(Theorem \ref{T:Lachlan}) so that $\mathcal{E}^*(R_i-H_i)$ is
isomorphic to $\mathcal{B}_{b_i}$. (Note that $\mathcal{E}^*(R_i-H_i)$
should be thought of as the collection of c.e.\ supersets of $H_i$
that are contained in $R_i$.)
Then, $\mathcal{B}$ is isomorphic to a (possibly proper) substructure
of $\mathcal{L}^*(H)$. The structures $\mathcal{B}$ and
$\mathcal{L}^*(H)$ are isomorphic if, in addition, for every c.e.\ set
$W$ there exists an $n\in\omega$ so that
$W\subseteq^* H\cup \bigsqcup_{i\le n} R_i$ or
$W\cup H\cup \bigsqcup_{i\le n} R_i=^*\omega$.

We make two remarks.  First, although Lachlan's construction can be
done uniformly inside any computable set, the list
$\{R_i\}_{i\in\omega}$ we construct will not be uniformly computable.
Hence, we must ensure that $H$ is a c.e.\ set.  Second, since
$\mathcal{L}^*(H)$ is a boolean algebra, for every c.e.\ superset $W$
of $H$ there is a c.e.\ set $\tilde{W}$ such that
$W \cup \tilde{W} \cup H = \omega$ and $W \cap \tilde{W} \subseteq
H$.
So, there is a computable set $R$ such that
$R \cap \overline{H} = W \cap \overline{H}$. Thus,
if we construct $H$ and $\{R_i\}_{i\in\omega}$ with the properties
detailed above,
$R$ or $\overline{R}$ is contained in the union of a finite subset of
$\{R_i\}_{i\in\omega}$ for any computable superset $R$ of $H$.  Note
that construction of lists like $\{R_i\}_{i\in\omega}$ appeared in
some form in many constructions by Cholak and his coauthors and
others, e.g., \citet{MR0422004}.

\subsection{Requirements}\label{SS:requirements}
\subsubsection{$\mathcal{D}$-maximal sets of Type 7}

We formally state the requirements necessary to construct a
$\mathcal{D}$-maximal set $A$ such that $A\sqcup D$ is a splitting of
a hhsimple set $H$ of flavor $\mathcal{B}$.  As mentioned above, we
simultaneously construct a pairwise disjoint list of infinite
computable sets $R_i$ that are all disjoint from $A$ and sets $H_i$
contained in $R_i$ so that the union of $A$ and
$D =\bigsqcup_{i\in\omega} H_i$ equals $H$.  We require that these
objects satisfy the requirements:

\begin{equation*}
  \tag*{$\mathcal{R}_{e}$:}
  W_e \subseteq^* A \cup D \cup \bigsqcup_{i \le e} R_i
  \text{ or } {W_e} \cup A \cup D \cup \bigsqcup_{i\leq e} R_i  =^* \omega,
\end{equation*}

\begin{equation*}
  \tag*{$\mathcal{S}_{e}$:}
  \overline{A} \neq W_e,
\end{equation*}
and
\begin{equation*}
  \tag*{$\mathcal{L}_{i}$:}
  \mathcal{E}^*(R_i-H_i) \text{ is isomorphic to }\mathcal{B}_{b_i}.
\end{equation*}

We satisfy the $\mathcal{S}_e$ requirements as usual, and they imply
that $A$ is not computable.  We satisfy the $\mathcal{L}_i$
requirements by applying Lachlan's construction.  The $\mathcal{R}_e$
requirements ensure that $A$ is $\mathcal{D}$-maximal and that
$\{D\}\cup\{R_i\}_{i\in\omega}$ generates $\mathcal{D}(A)$ (if $D$ is
a c.e.\ set).  Taken together, the $\mathcal{R}_e$ and $\mathcal{L}_i$
requirements guarantee that $\mathcal{L}^*(A \cup D)$ is isomorphic to
$\mathcal{B}$.  The $\mathcal{R}_e$ requirements take some work, as
does ensuring that all constructed sets are computably enumerable.

%

%

%



\subsubsection{$\mathcal{D}$-maximal sets of Types 8 and
  9}\label{Dmaxreq89}

To construct a $\mathcal{D}$-maximal set of either Type 8 or 9, we
must construct a generating set for $\mathcal{D}(A)$ of the proper
form $\{D_0, D_1, \ldots, R_0, R_1, \ldots\}$.  This generating set
contains infinitely many properly c.e.\ sets rather than a single
properly c.e.\ set as in the Type 7 case. Hence, we must modify the
$\mathcal{D}$-maximality {$\mathcal{R}_e$} requirements for these
cases.

\begin{equation*}
  \tag*{$\mathcal{R}'_{e}$:}
  W_e \subseteq^* A \cup \bigcup_{i \le e} D_i  \cup \bigsqcup_{i \le e} R_i
  \text{ or } {W_e} \cup A \cup \bigcup_{i \leq e} D_i \cup
  \bigsqcup_{i\leq e} R_i  =^* \omega.
\end{equation*}

We still construct the lists $\{R_i\}_{i\in\omega}$ and
$\{H_i\}_{i\in\omega}$ as in the Type 7 case.  In the Type 8 case, we
now use the Friedberg Splitting Theorem to break $H_i$ into $i+1$
infinite disjoint sets $H_{i,j}$ for $0\le j \leq i$. Then, we let
$D_j = \bigsqcup_{i\in \omega, i\ge j} H_{i,j}$, and we ensure that
$D_j$ is c.e.\ by construction.  Note that $D_j\cap R_i=\emptyset$ if
$i<j$ and the list $\{D_i\}_{i\in\omega}$ is pairwise disjoint.

In the Type 9 case, we use the $H_i$ to construct the nested list of
c.e.\ sets $\{D_i\}_{i\in\omega}$ so that for all $i\in\omega$:
\begin{enumerate}
\item\label{T9C1} $D_i \cap R_j= D_j\cap R_j=H_j$ for $j\le i$,
\item\label{T9C3} $D_i\cap \overline{\bigsqcup_{j\le i} R_j}$ is
  simple inside $D_{i+1}\cap \overline{\bigsqcup_{j\le i} R_j}$, so
  $(D_{i+1}- D_i)\cap \overline{\bigsqcup_{j\le i} R_j}$ contains no
  infinite c.e.\ sets.
\end{enumerate}

\begin{remark}\label{R:SimpleSM} Observe that conditions (\ref{T9C1})
  and (\ref{T9C3}) imply that for any $l$, either $D_{i}=^*D_{i+1}$ on
  $R_l$ or $D_i$ is simple inside $D_{i+1}$ on $R_l$.  Hence,
  $(D_{i+1}-D_i)\cap R_l$ contains no infinite c.e.\ sets.

\end{remark}

Let $\breve{D} =\bigcup_{i \in\omega } D_i $.  In both the Type 8 and
9 cases,
$$\breve{D} \cap \bigsqcup_{i \le e } R_i=\bigcup_{i \le e } D_i \cap
\bigsqcup_{i \le e } R_i $$ by the descriptions above.

\subsubsection{Type 9 and small majorness}\label{SS:Type9SM} To ensure that property
(\ref{T9C3}) holds
in the Type 9 case, we satisfy the following requirements.  (See
\S\ref{sec:smallness1} for definitions.)
\begin{equation*}
  \tag*{$\mathcal{I}_{i}$:} D_i\cap \overline{\bigsqcup_{j\le i} R_j}  \text{ is a \emph{small major}
    subset of }D_{i+1}\cap \overline{\bigsqcup_{j\le i} R_j}
\end{equation*}

We use Lachlan's Theorem~\ref{sec:smallmajorsubset} to modularly to
meet $\mathcal{I}_{i}$ (see Lemma~\ref{D_esmD_{e+1}} for the proof).


\subsection{Sufficiency of requirements}
If the requirements listed in \S\ref{SS:requirements} are met as
described, the set $A$ certainly will be a $\mathcal{D}$-maximal set
of Type at most 7, 8, or 9 respectively (since $\mathcal{D}(A)$ has a
generating set of that Type).  However, we also must ensure that
$\mathcal{D}(A)$ does not have lower Type.

In the following, we examine the Type 7, 8, and 9 cases together as
much as possible. To do so and for notational simplicity, in the Type
7 case, set $D_0=D$ and $D_i=\emptyset$ for all $i\not=0$.

\subsubsection{Not Type 1, 2, 3 or 10} First, note that the
requirements $\mathcal{S}_i$ guarantee that $A$ is not
simple. Therefore, $\mathcal{D}(A)$ is not Type 1 by
Lemma~\ref{sec:gener-sets-simple}.  If $A$ is Type 2 or 3, there is a
c.e.\ set $W_e$ such that $A \sqcup W_e$ is maximal by
Theorem~\ref{sec:main-result-1} (\ref{sec:main-result-1(2)}) and
(\ref{sec:main-result-1(3)}).  Assume that $W_e$ is disjoint from
$A$. By requirement $\mathcal{R}'_e$, either
$W_e \subseteq^* \bigcup_{i\le e } D_i \cup \bigsqcup_{i \le e} R_i$
or
$A\sqcup ({W_e} \cup \bigcup_{i \leq e} D_i \cup \bigsqcup_{i\leq e}
R_i) =^* \omega$.
The latter case implies that $A$ is computable.  Since $A$ is not
computable by the requirements $\mathcal{S}_i$, the latter case cannot
hold. In the former case, the set $R_e\sqcup A \sqcup W_e$ witnesses
that $A \sqcup W_e$ is not maximal (or even $r$-maximal). Therefore
$A$ is not Type 2 or 3.  By definition, the set $A$ is not Type 10
(since $\mathcal{D}(A)$ has a generating set of Type 7, 8, or
9). 
Thus, $A$ is not Type 1, 2, 3, or 10.

\subsubsection{A Technical Lemma}

We need the following lemma to show that the sets we construct are not
of lesser Type.  Lemma \ref{sec:one-needed-lemma} is the one place
where we use that these Types are constructed very uniformly, as
mentioned in \S\ref{Q0}. It is unclear how to separate these Types
otherwise.

\begin{lemma}\label{sec:one-needed-lemma}
  Let $\breve{D} = \bigcup_{i\in\omega} D_i$. Let $W_e$ be disjoint
  from $A$. Then, $W_e \subseteq^* \breve{D}$ or $W_e - \breve{D}$ is
  not a c.e.\ set. Moreover, for the Type 8 and 9 cases,
  $W_e \subseteq^* \bigcup_ {i\le e} {D}_i$ or $W_e - D_i$ is not
  c.e.\ for all $i\le e$.

\end{lemma}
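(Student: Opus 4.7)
The plan is to derive both parts of the lemma from a single consequence of $\mathcal{R}_e$ (or $\mathcal{R}'_e$) applied to $W_e$, iterated on derived c.e.\ sets, together with the uniform construction property that $A\cup H_j$ is simple inside $R_j$ for every $j$. Applying the requirement to $W_e$ gives two alternatives; the second, $W_e\cup A\cup\bigcup_{i\leq e}D_i\cup\bigsqcup_{i\leq e}R_i=^*\omega$, would force $\overline{A}$ to equal a c.e.\ set modulo finite, contradicting that $A$ is non-computable (guaranteed by the $\mathcal{S}_i$). So
\[
W_e\subseteq^* \bigcup_{i\leq e}D_i \cup \bigsqcup_{i\leq e}R_i,
\]
the $A$ being dropped by disjointness. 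This inclusion is the workhorse of the argument.

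For the first part, suppose $V:=W_e-\breve{D}$ is c.e. Then $V$ is c.e., disjoint from $A$, and disjoint from every $D_i$. Applying the workhorse to $V$ with its c.e.\ index $f$ and dropping the $D_i$ terms by disjointness gives $V\subseteq^*\bigsqcup_{i\leq f}R_i$. For each $j\leq f$, $V\cap R_j$ is a c.e.\ subset of $R_j$ disjoint from $A$ and from $H_j=\breve{D}\cap R_j$. By the construction in \S\ref{SS:Types7-9Overview}, $A\cup H_j$ is simple inside $R_j$, so $V\cap R_j$ is finite; summing over the finitely many $j\leq f$ shows $V$ is finite, i.e., $W_e\subseteq^*\breve{D}$.

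For the second part (Types 8 and 9), suppose $W_e-D_{i_0}$ is c.e.\ for some $i_0\leq e$. The workhorse already controls $W_e$ outside $\bigsqcup_{j\leq e}R_j$, so it suffices to show $W_e\cap R_j\subseteq^*\bigcup_{i\leq e}D_i$ for each $j\leq e$. In Type 9 with nested $D_i$, for $j\leq i_0$ the c.e.\ set $(W_e-D_{i_0})\cap R_j$ equals $W_e\cap R_j - H_j$ and is disjoint from $A$, hence finite by simplicity. For $i_0<j\leq e$, Remark \ref{R:SimpleSM} chained over $i_0\leq k<j$ gives that $(D_j-D_{i_0})\cap R_j$ has no infinite c.e.\ subset, so the c.e.\ set $(W_e-D_{i_0})\cap R_j\cap H_j$ is finite; the remainder $(W_e-D_{i_0})\cap R_j-H_j$ is then c.e.\ (as a c.e.\ set minus a finite one) and disjoint from $A$, so finite by simplicity. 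In Type 8 (pairwise disjoint $D_i$ with $H_j$ Friedberg split), for $j\geq i_0$ the c.e.-ness of $W_e-D_{i_0}$ gives $(W_e\cap R_j)-H_{j,i_0}$ c.e., and the Friedberg splitting property transfers this to $(W_e\cap R_j)-H_j$ being c.e., which is finite by simplicity; the case $j<i_0$ is handled by applying part 1 to the c.e.\ set $W_e-D_{i_0}$ and combining its conclusion with the workhorse bound.

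The main obstacle is the simplicity of $A\cup H_j$ inside $R_j$ for every $j$: this is not a formal consequence of the $\mathcal{R}_e$ and $\mathcal{S}_e$ requirements alone but is an invariant built into the uniform construction of the hhsimple-like $\mathcal{D}$-maximal sets in \S\ref{sec:build-hhsimple-like}, and verifying that it holds (and that it interacts correctly with the Friedberg splittings in Type 8 and the small-major chain in Type 9) is the deepest point of the lemma.
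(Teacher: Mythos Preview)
Your Part 1 is correct but takes a detour: you re-index $V=W_e-\breve{D}$ as some $W_f$ and re-apply $\mathcal{R}'_f$. The paper observes that the workhorse inclusion already gives $W_e-\breve{D}\subseteq^*\bigsqcup_{i\le e}R_i$, and since $\breve{D}\cap\bigsqcup_{i\le e}R_i=\bigsqcup_{i\le e}H_i$ is hhsimple (hence simple) inside $\bigsqcup_{i\le e}R_i$ by the $\mathcal{L}_i$, no infinite c.e.\ set can live there disjoint from it. Note also that $A$ is disjoint from every $R_j$, so your ``$A\cup H_j$ simple inside $R_j$'' is just ``$H_j$ simple inside $R_j$'', which is immediate from $\mathcal{L}_j$; this is not the deep point you flag at the end.

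Your Type 9 argument is the contrapositive of the paper's and works, but the step ``Remark~\ref{R:SimpleSM} chained over $i_0\le k<j$ gives that $(D_j-D_{i_0})\cap R_j$ has no infinite c.e.\ subset'' does not follow from Remark~\ref{R:SimpleSM} alone: a finite union of sets each lacking infinite c.e.\ subsets can certainly contain one. What you actually need is Lemma~\ref{D_esmD_{e+1}}, which uses the $\mathcal{I}_i$ requirements and transitivity of small major (Lemma~\ref{stob1}) to conclude that $D_{i_0}$ is simple inside $D_j$ on $R_j$ in one step. The paper avoids this by doing a reverse induction inside a single fixed $R_{i'}$, passing from $(W_e-D_{j+1})\cap R_{i'}$ not c.e.\ to $(W_e-D_j)\cap R_{i'}$ not c.e.\ one level at a time.

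Your Type 8 argument has a genuine gap in the case $j<i_0$. There $D_{i_0}\cap R_j=\emptyset$, so the hypothesis that $W_e-D_{i_0}$ is c.e.\ tells you nothing about $W_e\cap R_j$ beyond the trivial. Your proposed fix, ``apply Part 1 to $W_e-D_{i_0}$ and combine with the workhorse bound'', does not close this: Part 1 applied to $V=W_e-D_{i_0}$ only yields the disjunction $V\subseteq^*\breve{D}$ \emph{or} $V-\breve{D}=W_e-\breve{D}$ is not c.e. The first disjunct would finish, but you have no mechanism to rule out the second, and landing in the second disjunct does not give $W_e\subseteq^*\bigcup_{i\le e}D_i$. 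The paper instead argues in the forward direction from $W_e-\bigcup_{i\le e}D_i$ not c.e., invoking the Friedberg property of the splitting of $H_j$ into $H_{j,0},\ldots,H_{j,j}$ inside $R_j$; the contrapositive route you chose does not have access to the Friedberg implication in the needed direction for these $j$.
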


\begin{proof}
  By requirement $\mathcal{R}'_e$, either
  $W_e \subseteq^* \bigcup_{i\le e } D_i \cup \bigsqcup_{i \le e} R_i$
  or \linebreak
  $A \sqcup (W_e\cup \bigcup_{i\le e } D_i \cup \bigsqcup_{i \le e}
  R_i)=^*\omega$.
  Since requirements $\mathcal{S}_i$ ensure that $A$ is noncomputable,
  the latter statement cannot hold.  So,
  $W_e \subseteq^* \bigcup_{i\le e } D_i \cup \bigsqcup_{i \le e} R_i$
  and thus $W_e-\breve{D}\subseteq^* \bigsqcup_{i \le e} R_i$. Suppose
  that $W_e\not \subseteq^* \bigcup_ {i\le e} {D}_i\subset \breve{D}$.

  By requirement $\mathcal{L}_i$, the set $H_i$ is hhsimple inside
  $R_i$.  Therefore the set
  $$\breve{D} \cap \bigsqcup_{i \leq e } R_i=\bigcup_{i \leq e } D_i
  \cap \bigsqcup_{i \leq e } R_i$$
  is hhsimple inside $\bigsqcup_{i \leq e } R_i$. So,
  $W_e - \breve{D}=W_e- \bigcup_{i\le e}D_i$ is not a c.e.\ set.

  For the Type 8 case, recall that $D_0, D_1, \ldots, D_i$ form a
  Friedberg splitting of their union inside $R_i$.  Hence, $W_e-D_i$
  is not a c.e.\ set for all $i\le e$.

  For the Type 9 case, we argue by reverse induction.  Since
  $D_{e} = \bigcup_{i\le e}D_i$ (these sets are nested), $W_e -D_{e} $
  is not a c.e.\ set.   Since $W_e-D_e\subseteq^*\bigsqcup_{i \leq e } R_i$, there is some $i'\le e$ such that $(W_e-D_e)\cap R_{i'}$ is not a c.e.\ set.   Assume that $ (W_e - D_{j+1})\cap R_{i'}$
  is not c.e.\ for $j +1 \le e$ 
  (and, so, is
  infinite).
    Suppose $(W_e-D_j)\cap R_{i'}$ is a c.e.\ set.  
   Since
  $(W_e - D_{j+1})\cap R_{i'}$ is not c.e., $D_j$ is not almost equal to
  $D_{j+1}$ on $R_{i'}$.  The c.e.\ set
  $(W_e - D_{j})\cap D_{j+1}\cap R_{i'}$ is infinite and witnesses that
  $D_j$ is not simple inside $D_{j+1}$ on $R_{i'}$, contradicting
  Remark~\ref{R:SimpleSM}.  So, $(W_e-D_j)\cap R_{i'}$ and  $W_e-D_j$ are not c.e.\ sets.   Therefore, $W_e - D_i$ is not c.e.\ for all 
  $i\le e$. \end{proof}

\subsubsection{Not Type 4, 5, or 6} Now assume that the
$\mathcal{D}$-maximal set $A$ constructed has a generating set for
$\mathcal{D}(A)$ of Type 4, 5, or 6.  Since $A$ is
$\mathcal{D}$-maximal, $D_0$ is almost contained in the union of
finitely many of these generators. Then, there is another infinite
generator $W_e$ in this generating set almost disjoint from $D_0$.
The fact that $W_e-D_0$ is c.e.\ contradicts Lemma
\ref{sec:one-needed-lemma}.

\subsubsection{Type 8 is not Type 7}

Suppose that $\{ \tilde{D}, \tilde{R}_0, \tilde{R}_1, \ldots \}$ is a
Type 7 generating set for $\mathcal{D}(A)$, where $A$ is constructed
via the Type 8 construction described above.  By construction, there
is an $e$ such that
$\tilde{D} \subseteq^* \bigsqcup_{i \le e} R_i \cup \bigsqcup_{i \le
  e} D_i$,
so $D_{e+1}$ and $\tilde{D}$ are almost disjoint. Then, there is an
$l$ such that
$\bigsqcup_{i \le e} R_i \cup \bigsqcup_{i \leq e+1} D_i \subseteq^*
\tilde{D} \cup \bigsqcup_{i\le l} \tilde{R}_i$;
so, $D_{e+1}\subseteq^*\bigsqcup_{i\le l} \tilde{R}_i$.  Finally,
there is a $k$ such that
$\tilde{D} \cup \bigsqcup_{i\le l} \tilde{R}_i \subseteq^*
\bigsqcup_{i \le k} R_i \cup \bigsqcup_{i \le k} D_i$.
Now, by construction, $R_{k+1} - \breve{D}=R_{k+1}-H_{k+1}$ is
infinite. Hence, there is an $m > l$ such that
$\tilde{R}_m - \breve{D}$ is infinite. Observe that $\tilde{R}_m$ is
disjoint from $D_{e+1}$ and $A$. Thus,
$\tilde{R}_m - D_{e+1} = \tilde{R}_m$ is an infinite c.e.\ set,
contradicting Lemma \ref{sec:one-needed-lemma}.

\subsection{Small Major Subsets and Type 9 Sets}\label{sec:smallness}

In order to show that the set $A$ resulting from the construction
outlined for the Type 9 case is not of Type 7 or Type 8, we need the
following lemma.

\begin{lemma}\label{D_esmD_{e+1}}
  Suppose we obtain the lists $\{D_i\}_{i\in\omega}$ and
  $\{R_i\}_{i\in\omega}$ while constructing a $\mathcal{D}$-maximal
  set $A$ according to the Type 9 requirements outlined in \S
  \ref{SS:requirements}. The following statements hold for $j\le i$.

  \begin{enumerate}
  \item\label{D_esmD_{e+1}2} Either $D_j=^*D_{i}$ on
    $\overline{\bigsqcup_{l\le i}R_l}$ or
    $D_j\cap \overline{\bigsqcup_{l\le i}R_l}$ is small major in
    $D_i\cap \overline{\bigsqcup_{l\le i}R_l}$. In the latter case,
    $D_j$ is simple inside $D_i$ on
    $\overline{\bigsqcup_{l\le i}R_l}$.

  \item\label{D_esmD_{e+1}1} For $l\le i$, either $D_j=^*D_{i}$ on
    $R_l$ or $D_j$ is a small major subset of $D_i$ on $R_l$.  In the
    latter case, $D_j$ is simple inside $D_i$ on
    $R_l$.
  \end{enumerate}
\end{lemma}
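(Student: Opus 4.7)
The plan is to chain together the small majorness relationships given by the requirements $\mathcal{I}_k$ for $j \le k < i$, using the preservation properties collected in Lemma~\ref{stob1}. Throughout, write $Y_k := \overline{\bigsqcup_{l \le k} R_l}$, which is computable, and recall that $\mathcal{I}_k$ asserts $D_k \cap Y_k$ is small major in $D_{k+1} \cap Y_k$.

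For part (1), let $Y := Y_i$, so $Y \subseteq Y_k$ for $j \le k \le i$. Lemma~\ref{stob1}(\ref{S12}) applied to the smallness portion of $\mathcal{I}_k$ shows that $D_k \cap Y$ is small in $D_{k+1} \cap Y$, and iterating Lemma~\ref{stob1}(\ref{S11}) gives that $D_j \cap Y$ is small in $D_i \cap Y$. For majorness, Lemma~\ref{stob1}(\ref{S26}) applied to $\mathcal{I}_k$ with the computable set $Y$ shows that at each link either $D_k \cap Y =^* D_{k+1} \cap Y$ or $D_k \cap Y$ is major in $D_{k+1} \cap Y$. Transitivity of majorness (Lemma~\ref{stob1}(\ref{S24})) then produces either $D_j \cap Y =^* D_i \cap Y$ or small majorness of $D_j \cap Y$ in $D_i \cap Y$, and Lemma~\ref{stob1}(\ref{S23}) supplies the ``simple inside'' conclusion in the latter case.

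For part (2), fix $l \le i$. If $l \le j$, property (\ref{T9C1}) applied twice yields $D_j \cap R_l = H_l = D_i \cap R_l$. If $j < l \le i$, observe that $R_l \subseteq Y_k$ for every $k < l$ (since the $R_m$ are pairwise disjoint), so Lemma~\ref{stob1}(\ref{S12}) and (\ref{S26}) applied to $\mathcal{I}_k$ with the computable set $R_l$ produce, at each link $j \le k < l$, either $D_k \cap R_l =^* D_{k+1} \cap R_l$ or smallness plus majorness of $D_k \cap R_l$ in $D_{k+1} \cap R_l$. Chaining exactly as in part (1) yields either $D_j \cap R_l =^* D_l \cap R_l$ or $D_j \cap R_l$ small major in $D_l \cap R_l$. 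By (\ref{T9C1}), $D_l \cap R_l = H_l = D_i \cap R_l$, so the same dichotomy holds with $D_i$ in place of $D_l$, and Lemma~\ref{stob1}(\ref{S23}) again gives the simplicity conclusion.

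The main subtlety will be in the bookkeeping: Lemma~\ref{stob1}(\ref{S26}) allows majorness to collapse to equality upon intersection with a computable set, so each link in the chain must be tracked disjunctively before transitivity of majorness is invoked. Once consecutive equality links are absorbed, both parts reduce to a routine application of Lemma~\ref{stob1}.
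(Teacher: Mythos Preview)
Your argument is correct and follows essentially the same route as the paper: both chain the requirements $\mathcal{I}_k$ via the preservation facts in Lemma~\ref{stob1}, with the paper phrasing this as an induction on $i$ and you phrasing it as a direct chain after first restricting to the target computable set. The only cosmetic difference is the order of operations (the paper chains on $Y_i$ and then restricts to $Y_{i+1}$ at the inductive step, whereas you restrict each link to $Y_i$ first and then chain), and your treatment of part~(2) makes the case split on $l\le j$ versus $j<l\le i$ explicit where the paper merely gestures at the use of property~(\ref{T9C1}).
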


\begin{proof}
  We prove (\ref{D_esmD_{e+1}2}) by induction on $i\ge j$.  The base
  case $i=j$ holds trivially.  Suppose the statement holds for
  $i\ge j$.
  Requirement $\mathcal{I}_i$ and Lemma~\ref{stob1} (\ref{S11}),
  (\ref{S24}) imply that $D_j\cap \overline{\bigsqcup_{l\le i}R_l}$ is
  small major in $D_{i+1}\cap \overline{\bigsqcup_{l\le i}R_l}$.  The
  result follows by Lemma~\ref{stob1} (\ref{S12}), (\ref{S26}).

  The proof of (\ref{D_esmD_{e+1}1}) is similar but also uses the
  construction property that $D_i \cap R_j= D_j\cap R_j$ for $j<i$ and
  Lemma~\ref{stob1} (\ref{S26}).  The second half of both statements
  holds by Lemma~\ref{stob1} (\ref{S23}).
\end{proof}

\subsubsection{Type 9 not Type 7}
We now show that the $\mathcal{D}$-maximal set $A$ obtained via the
Type 9 construction is not Type 7.  Assume that
$\{ \tilde{D}, \tilde{R}_0, \tilde{R}_1, \ldots \}$ is a Type 7
generating set for $\mathcal{D}(A)$. By the $\mathcal{R}_e'$
requirements, there is some $e$ such that
$\tilde{D} \subseteq^* \bigsqcup_{i \leq e} R_i \cup \bigcup_{i \leq
  e} D_i$.
Since $D_e\subset D_{e+1}$ and
$(D_{e+1}-D_e)\cap \bigsqcup_{i\le e} R_i=\emptyset$, it follows that
$D_{e+1} \cap \tilde{D} \subseteq^* D_e$. By definition of a
generating set, there is an $l$ such that
\begin{equation}\label{E:Type9notType7}\bigsqcup_{i \leq e+1} R_i \cup
  \bigcup_{i \leq e+1} D_i
  \subseteq^* \tilde{D} \cup \bigsqcup_{i\le l} \tilde{R}_i.
\end{equation}
Similarly, there is a $k$ such that
$\tilde{D} \cup \bigsqcup_{i\le l} \tilde{R}_i \subseteq^*
\bigsqcup_{i \le k} R_i \cup \bigcup_{i \le k} D_i$.
By construction, $R_{k+1} - \breve{D}$ is infinite. Since $R_{k+1}$ is
disjoint from $\bigsqcup_{i\le e} R_i$, there is an $m > l$ such that
$(\tilde{R}_m\cap \overline{\bigsqcup_{i\le e} R_i}) - \breve{D}$ is
infinite.  By (\ref{E:Type9notType7}),
$\tilde{R}_m \cap D_{e+1} \subseteq^* \tilde{D}$. Since
$D_{e+1} \cap \tilde{D} \subseteq^* D_e$,
$\tilde{R}_m \cap (D_{e+1} - D_e) =^* \emptyset$.  By requirement
$\mathcal{I}_e$, $D_e\cap \overline{\bigsqcup_{i\le e} R_i}$ is
{small} inside $D_{e+1}\cap \overline{\bigsqcup_{i\le e} R_i}$.
So, by smallness, the infinite set
$$(\tilde{R}_m\cap \overline{\bigsqcup_{i\le e} R_i})-(D_{e+1}\cap
\overline{\bigsqcup_{i\le e} R_i})=(\tilde{R}_m\cap
\overline{\bigsqcup_{i\le e} R_i})-D_{e+1}$$
is c.e., contradicting Lemma~\ref{sec:one-needed-lemma}.  Thus, $A$
does not have Type 7.

\subsubsection{Type 9 not Type 8}

Lastly, we show that the $\mathcal{D}$-maximal set $A$ obtained via
the Type 9 construction is not Type 8. Assume that
$\{ \tilde{D}_0 , \tilde{D}_1, \ldots \tilde{R}_0, \tilde{R}_1, \ldots
\}$
is a Type 8 generating set for $\mathcal{D}(A)$. We may assume that
this generating set satisfies the properties in
Lemma~\ref{sec:types-7-8}. By the $\mathcal{R}'_e$ requirements and
the definition of generating set, we have the following facts.  There
is an $l$ such that
$D_0 \subseteq^* \bigsqcup_{i \leq l} \tilde{R}_i \cup \bigsqcup_{i
  \leq l} \tilde{D}_i$.
Then, there is a $k$ such that
$$\bigsqcup_{i \leq l} \tilde{R}_i \cup \bigsqcup_{i \leq l}
\tilde{D}_i \subseteq^* \bigsqcup_{i \leq k} R_i \cup \bigcup_{i \leq
  k} D_i.$$ Next, there is an $m>l$ such that
$$\bigsqcup_{i \leq k+1} R_i \cup \bigcup_{i \leq k+1} D_i \subseteq^*
\bigsqcup_{i \le m}\tilde{D}_i \cup \bigsqcup_{i \le m} \tilde{R}_i.$$
Finally, there is a $r>k+1$ such that
$$ \bigsqcup_{i \le m}\tilde{D}_i \cup \bigsqcup_{i \le m} \tilde{R}_i
\subseteq^* \bigsqcup_{i \le r} R_i \cup \bigcup_{i \le r} D_i.$$
By construction, $R_{r+1} - \breve{D}=R_{r+1} - D_{r+1}$ is
infinite. There is also an $n> m$ such that
$R_{r+1} \subseteq^* \bigsqcup_{i \le n}\tilde{D}_i \cup \bigsqcup_{i
  \le n} \tilde{R}_i$.
Hence, there is an $\tilde{m} > m$ such that
$\tilde{R}_{\tilde{m}} \cap (R_{r+1}- \breve{D})$ is infinite or
$\tilde{D}_{\tilde{m}} \cap (R_{r+1} - \breve{D})$ is infinite. In the
latter case,
$\tilde{D}_{\tilde{m}} - \bigsqcup_{i \leq l} \tilde{R}_i$ is an
infinite c.e.\ set disjoint from $D_0$ but not contained in
$\breve{D}$, contradicting Lemma~\ref{sec:one-needed-lemma}. So, the
former holds.
By the choice of $l$ and $m$,
$(D_{k+1} -D_k)\cap \tilde{R}_{\tilde{m}}\subseteq^*\bigsqcup_{l< i
  \le m} \tilde{D}_i$.
Let
$Y = \tilde{R}_{\tilde{m}} \cap \bigsqcup_{l< i \le m}
\tilde{D}_i$.
Since $\{\tilde{D}_i\}_{i\in\omega}$ consists of pairwise disjoint
sets, $Y$ is a c.e.\ set such that $D_0 \cap Y =^* \emptyset$. Now
$\tilde{R}_{\tilde{m}} \cap (D_{k+1} - D_k) \subseteq^* Y$, so
certainly
$(\tilde{R}_{\tilde{m}}\cap R_{r+1}) \cap (D_{k+1} - D_k) \subseteq^*
Y$.
Since $D_k\cap \overline{\bigsqcup_{j\le k} R_j}$ is small in
$D_{k+1}\cap\overline{\bigsqcup_{j\le k} R_j}$ by requirement
$\mathcal{I}_k$, the set
$$Y \cup [(\tilde{R}_{\tilde{m}}\cap R_{r+1})
-(D_{k+1}\cap\overline{\bigsqcup_{j\le k} R_j})]$$
is a c.e.\ set.  Note that $r+1>k$.  This set is disjoint from $D_0$
since $Y$ is and since
$R_{r+1}\subset\overline{\bigsqcup_{j\le k} R_j}$.  Moreover, this
c.e.\ is infinite since it contains
$\tilde{R}_{\tilde{m}} \cap (R_{r+1}- \breve{D})$, contradicting
Lemma~\ref{sec:one-needed-lemma}.  Hence, $A$ is a Type 9
$\mathcal{D}$-maximal set.

\subsection{  Infinitely many orbits of   $\mathcal{D}$-maximal sets of Types 7, 8, 9}\label{sec:break-these-mathc}
By Lemma \ref{sec:gener-sets-simple}, two automorphic sets share the
same Type.  We show here, however, that the collection of
$\mathcal{D}$-maximal sets of Type 7 (respectively Type 8, Type 9)
breaks into infinitely many orbits.  Specifically, for each of these
Types, we construct infinitely many pairwise nonautomorphic
$\mathcal{D}$-maximal sets of the given Type.  For each of these
Types, we will take two boolean algebras
${\mathcal{B}} = \join_{i\in\omega} {\mathcal{B}}_{b_i}$ and
$\tilde{\mathcal{B}}=\join_{i\in\omega}
{\tilde{\mathcal{B}}}_{\tilde{b}_i}$
(with computable skeletons $\{b_i\}_{i\in\omega}$ and
$\{\tilde{b}_i\}_{i\in\omega}$ respectively).  We then will consider
the $\mathcal{D}$-maximal sets $A$ and $\tilde{A}$ obtained via the
given Type construction based on $\mathcal{B}$ and
$\tilde{\mathcal{B}}$ respectively.  Each of $A$ and $\tilde{A}$ will
have a generating set of the appropriate Type, denoted as usual with
the sets in the generating set for $\mathcal{D}(\tilde{A})$ marked
with tildes.
We suppose that $\Phi:\mathcal{E}^*\to \mathcal{E}^*$ is an
automorphism with $\Phi(\tilde{A})=A$, i.e., $\tilde{A}$ and $A$ are
automorphic. For notational simplicity, we denote $\Phi(\tilde{W})$ by
$\hat{W}$ for any c.e.\ set $\tilde{W}$.

\subsubsection{Type 7}\label{SS:Type7InfOrbits}
First, suppose that $A$ and $\tilde{A}$ are Type 7.  Since ${A}$ is
$\mathcal{D}$-maximal, there exists an $l$ such that
$\Phi(\tilde{D})= \Dhat\subseteq^*D \cup \bigsqcup_{i \le l} R_i$. By
construction and Corollary~\ref{sec:hhsimple-like-typesBA},
$\join_{i > l} {\mathcal{B}}_{b_i}$ is a subalgebra of
$\tilde{\mathcal{B}}$.  This containment is not possible if, for some
$i>l$, the Cantor Bendixson rank of $\mathcal{B}_{b_i}$ is greater
than the rank of $\tilde{\mathcal{B}}$.

We leave it to the reader to construct, for all $j\in\omega$, a
computable boolean algebra $\mathcal{B}_j$ equipped with a computable
skeleton $\{{b_{j,i}}\}_{i\in\omega}$ (i.e.,
${\mathcal{B}}_j = \join_{i\in\omega} {\mathcal{B}}_{b_{j,i}}$) such
that, for all $i\in\omega$, the rank of $\mathcal{B}_{b_{j+1,i}}$ is
larger than the rank of $\mathcal{B}_j$.  By the argument above, this
collection of boolean algebras gives rise to an infinite collection of
pairwise nonautomorphic $\mathcal{D}$-maximal Type 7 sets.

\subsubsection{Type 8}

Now suppose that $A$ and $\tilde{A}$ are Type 8.  Since $A$ is
$\mathcal{D}$-maximal, there is an $l$ such that
$\Dhat_0 \subseteq^* \bigsqcup_{i\le l} D_i \cup \bigsqcup_{i\le l}
R_i$.
Similarly, there is an $n$ such that
$$\bigsqcup_{i\le l} D_i \cup \bigsqcup_{i\le l} R_i \subseteq^*
\bigsqcup_{i\le n} \Dhat_i \cup \bigsqcup_{i\le n} \Rhat_i.$$
For $m> n$, inside $\Rhat_m$, there is a hhsimple set $\Hhat$ of
flavor $\tilde{\mathcal{B}}_{\tilde{b}_m}$ such that
$\Hhat = \Rhat_m \cap \bigsqcup_{i\leq m} \Dhat_i$.  Also, $\Dhat_0$
is a Friedberg split of $\Hhat$ by construction.  Fix a $k>l$ such
that
$\Rhat_m\subseteq^*\bigsqcup_{i\le k} D_i \cup \bigsqcup_{i\le k}
R_i$.

We will explore what $\Hhat$ and $\Rhat_m$ look like. First, note that
for all $i\le l$,
$\Rhat_m \cap R_i \subseteq^* \Rhat_m \cap \bigsqcup_{i\le n} \Dhat_i
\subseteq^* \Hhat$.
Similarly, for all $i \le l$, $ \Rhat_m \cap D_i\subseteq^*\Hhat$.
Since $\Dhat_0 \cap \Rhat_m$ is a Friedberg split of $\Hhat$ and, for
$l < i \le k$, $\Dhat_0$ and $D_i$ are almost disjoint,
$(\Rhat_m - \Hhat) \cap \bigsqcup_{ l < i \le k} D_i =^* \emptyset$.
Therefore, there is at least one $r$ such that $l < r \le k$ and
$(\Rhat_m - \Hhat) \cap R_r$ is infinite. Let $F$ be the finite set of
all such $r$. For all $r \in F$ and $i \le k$, we have that
$D_i \cap R_r \cap \Rhat_m\subseteq^* R_r \cap \Hhat$. So,
$\tilde{\mathcal{B}}_{b_m}$ is a subalgebra of
$\join_{r \in F} \mathcal{B}_{b_r}$.  This is impossible if the rank
of $\tilde{\mathcal{B}}_{b_m}$ is greater than the rank of
$\join_{r \in F} \mathcal{B}_{b_r}$.

We again leave it to the reader to construct infinitely many
computable boolean algebras ${\mathcal{B}}_j$ each equipped with a
computable skeleton $\{b_{j, i}\}_{i\in\omega}$ such that
${\mathcal{B}}_j = \join_{i\in\omega} {\mathcal{B}}_{b_{j,i}}$ and the
rank of $\mathcal{B}_{b_{j+1,i}}$ is larger than the rank of the join
of finitely many $\mathcal{B}_{b_j,z}$. In fact, the collection of
boolean algebras from the Type 7 case in \S\ref{SS:Type7InfOrbits}
suffices.

\subsubsection{Type 9}

We assume the same setup as for the Type 8 case but for sets of Type
9. As above, there exist $l$ and $n$ such that
\begin{equation*}
  \Dhat_0 \subseteq^* \bigcup_{i\le l} D_i \cup \bigsqcup_{i\le l} R_i \subseteq^* \bigcup_{i\le n} \Dhat_i \cup
  \bigsqcup_{i\le n} \Rhat_i.
\end{equation*}
For $m> n$, inside $\Rhat_m$, there is a hhsimple set
$\Hhat = \Rhat_m \cap \Dhat_m$ of flavor
$\tilde{\mathcal{B}}_{\tilde{b}_m}$.
Let $k>l$ be such that
$\Rhat_m\subseteq^*\bigcup_{i\le k} D_i \cup \bigsqcup_{i\le k} R_i$.
As before, for all $i\le l$,
$\Rhat_m \cap (R_i\cup D_i) \subseteq^* \Rhat_m \cap \bigcup_{i\le n}
\Dhat_i \subseteq^* \Hhat$.

At this point, the argument differs.  By Lemma \ref{D_esmD_{e+1}}
(\ref{D_esmD_{e+1}1}), $D_0\cap R_r$ almost equals or is small major
in $D_r\cap R_r$ for any $r$. So, for any $r$, if
$\Rhat_m \cap R_r\subseteq^*D_r \cap R_r$, then
$\Rhat_m \cap R_r\subseteq^* D_0 \cap R_r$.  In other words, if
$(\Rhat_m \cap R_r) - D_r$ is finite,
$\Rhat_m \cap R_r\subseteq^* D_0 \cap R_r\subseteq^* \Hhat$.  By Lemma
\ref{D_esmD_{e+1}} (\ref{D_esmD_{e+1}2}),
$D_0\cap \overline{\bigsqcup_{i\le j}R_i}$ almost equals or is small
major in $D_j\cap \overline{\bigsqcup_{i\le j}R_j}$.  By choice of
$k$,
$\Rhat_m \cap \overline{\bigsqcup_{i\le k}R_i}\subseteq^*D_k \cap
\overline{\bigsqcup_{i\le k}R_i}$.
So, similarly,
$\Rhat_m \cap \overline{\bigsqcup_{i\le k}R_i}\subseteq^* D_0 \cap
\overline{\bigsqcup_{i\le k}R_i}$,
and, hence, $\Rhat_m - \bigsqcup_{i\le k} R_i\subseteq^*\Hhat$.

Let $F$ be the set
of $r
\le k$ such that $(\Rhat_m \cap R_r) -
D_r$ is infinite.  The statements in the previous paragraph together
with the fact that $\Rhat_m-
\Hhat$ is infinite imply that $F$ is nonempty and that $\Rhat_m -
\bigsqcup_{r \in F} R_r \subseteq^* \Hhat$.
Recall that $D_0\cap
R_r$ equals $D_r\cap R_r$ or is small major in $D_r\cap
R_r$.  In the latter case, $D_0\cap R_r\cap
\Rhat_m$ equals or is small major in $D_r\cap R_r\cap
\Rhat_m$ by Lemma \ref{stob1} (\ref{S26}).  Since $D_0\cap R_r\cap
\Rhat_m\subseteq^*\Hhat \cap R_r$, in any of these cases, $\Hhat \cap
R_r$ must almost contain $D_r \cap R_r\cap
\Rhat_m$.  In particular, if $D_0\cap R_r\cap
\Rhat_m$ is major in $D_r\cap R_r\cap \Rhat_m$, $\Hhat \cap
R_r$ almost contains $D_r \cap R_r\cap
\Rhat_m$ by Lemma~\ref{stob1} (\ref{S25}) since $\Hhat \cap
R_r$ is hhsimple.  So,
$\tilde{\mathcal{B}}_{b_m}$
is a subalgebra of $\join_{r
  \in F}
\mathcal{B}_{b_r}$.  But if the rank of
$\tilde{\mathcal{B}}_{b_m}$
is greater than the rank of $\join_{r
  \in F}
\mathcal{B}_{b_r}$ this cannot occur. The collection of Boolean
Algebras from the last section demonstrates that the collection of
sets of Type 8 breaks up into infinitely many orbits.

\subsection{Questions on the orbits of Type 7, 8,  9 $\mathcal{D}$-maximal sets}\label{sec:anything-know-about}
We know nothing about the structure of the infinitely many orbits
containing Type 7, 8, or 9 $\mathcal{D}$-maximal sets.  Recall that,
by Corollary \ref{sec:hhsimple-like-typesBA}, each set of Type 7, 8,
or 9 is associated with a boolean algebra $\mathcal{B}$ (which depends
on a choice of generating set).  We think of the input boolean algebra
to our construction as a partial invariant for the resulting
$\mathcal{D}$-maximal sets of Type 7, 8, and 9. Suppose $\mathcal{B}$
is a computable boolean algebra with a computable skeleton.
If $A$ is the $\mathcal{D}$-maximal set resulting from our
construction with input $\mathcal{B}$ and $\tilde{A}$ is automorphic
to $A$, Corollary~\ref{sec:hhsimple-like-typesBA} and Lemma
\ref{sec:gener-sets-simple} imply that $\tilde{A}$ is hhsimple-like
but we do not know if the assoicated boolean algebra is isomorphic to
$\mathcal{B}$, only that they are ``similar'' rank.
These observations lead to the following question.

  \begin{question}\label{Q1:Orbits7-9}
    Suppose that the $\mathcal{D}$-maximal sets $A$ and $\tilde{A}$,
    both of Type 7, 8, or 9, are associated with the boolean algebras
    $\mathcal{B}$ and $\tilde{\mathcal{B}}$ respectively.  If
    $\mathcal{B}$ and $\tilde{\mathcal{B}}$ are isomorphic (or have
    the same or ``similar'' rank), are $A$ and $\tilde{A}$
    automorphic?
  \end{question}

  We make a few comments about Question \ref{Q1:Orbits7-9}.  We begin
  with the Type 7 case.  Let $A$ and $\tilde{A}$ be Type 7
  $\mathcal{D}$-maximal sets.  Suppose that $\{ D, R_0, R_1 \ldots \}$
  is the generating set for $\mathcal{D}(A)$ and that
  $\mathcal{D}(\tilde{A})$ has a generating set of the same form with
  all sets marked by tildes.  Finally, assume that $A \sqcup D$ and
  $\tilde{A} \sqcup \tilde{D}$ are both hhsimple sets of flavor
  boolean algebra $\mathcal{B}$. So, by \citet{Maass:84}, $A \sqcup D$
  and $\tilde{A} \sqcup \tilde{D}$ are automorphic, but we do not know
  whether $A$ and $\tilde{A}$ are automorphic. 
  A direct approach would be to use an extension theorem to map $D$ to
  $\tilde{D}$ and the $R_i$ to the $\tilde{R}_i$. We can take
  computable subsets of $D$ to computable sets of $\tilde{D}$. But it
  is not clear how to ensure that $D\cap R_i$ is taken to
  $\tilde{D} \cap \tilde{R}_i$.  It seems possible that this could be
  done by directly building the isomorphism.  If an isomorphism could
  be built in the Type 7 case, we speculate that an isomorphism could
  be built in the more complicated Type 8.  However, the Type 9 case
  seems fundamentally more difficult.  In that case, one needs to
  ensure that $D_{i+1}$ automorphic to $\tilde{D}_{i+1}$ via an
  automorphism taking $D_i$ to $\tilde{D}_i$. This is seems beyond the
  limits of current extension theorem technology.

  Note that the above comments only apply to $\mathcal{D}$-maximal
  sets of Types 7, 8, and 9.  By Corollary
  \ref{sec:hhsimple-like-typesBA}, without the
  $\mathcal{D}$-maximality assumption, we only know that the boolean
  algebra $\mathcal{B}$ that corresponds to the sets of Types 7, 8,
  and 9 is a proper substructure of $\mathcal{L}(\breve{D})$. Hence,
  we have no insight into the question of when Type 7, 8, and 9 sets
  are automorphic.

  Finally, given a computable boolean algebra $\mathcal{B}$ with a
  computable skeleton, we will construct $\mathcal{D}$-maximal sets
  $A_0$ and $A_1$ of Types 7, 8, and 9 respectively of flavor
  $\mathcal{B}$ such that $A_0$ is complete and $A_1$ is not (see
  Remark~\ref{degree}). In addition to Question \ref{Q1:Orbits7-9}, we
  also leave unanswered whether the particular sets $A_0$ and $A_1$ we
  construct are automorphic.

\subsection{The Construction}\label{sec:construction}
We give the details of the construction of $\mathcal{D}$-maximal sets
of Types 7, 8, and 9.  We focus on the construction of Type 9
$\mathcal{D}$-maximal sets $A$ as this case is the most complicated,
and we leave the adjustments for the Type 7 and 8 cases to the reader.

We construct the set $A$ using a $\Pi_2^0$-tree argument that is very
similar to the $\Delta^0_3$-isomorphism method.  In this construction
our priority tree will just be $2^{<\omega}$, i.e., each requirement
on the tree can be met in one of two possible ways.  As usual we
define a stage $s$ computable approximation $f_s$ to the true path $f$
so that $f=\liminf_s f_s$ where the value of $f(n)$ indicates how the
$n$-th requirement is satisfied.  In our situation, $f(n)=0$ will
indicate that a certain set related to the $n$-th requirement is
infinite.  The advantage of the tree construction over the usual
priority argument is that our strategy for meeting the $n$-th
requirement can depend on how the requirements $i<n$ were met.
Elements will be placed at nodes on the tree to aid in meeting these
requirements.  We intuitively refer to elements as balls, since their
location can change throughout the construction.  We view our tree as
growing downward since balls mainly move down through the tree.  We
say a node $\alpha$ is {\em visited} at stage $s$ if
$\alpha\preccurlyeq f_s$ and $\alpha$ is {\em reset} at stage $s$ if
$f_s<_L \alpha$ where $<_L$ is the lexiographic ordering on
$2^{<\omega}$.

At each node $\alpha \in 2^{<\omega}$, we attempt to build a
computable set $R_{\alpha}$ and c.e.\ set $D_\alpha$.  For $\lambda$
the empty node, the resulting $D_\lambda$ is $A$, and we set
$R_\lambda = \emptyset$. We build these sets so that the collection
$\{D_{\alpha}\mid\lambda\not=\alpha\prec f \}\cup\{R_{\alpha}\mid
\lambda\not=\alpha\prec f \} $
is a generating set for $\mathcal{D}(A)$.  We ensure that $R_{\alpha}$
is computable for $\alpha\prec f$ by enumerating the set
$\overline{R}_\alpha$ as well.  Specifically, at each node
$\alpha \in 2^{<\omega}$, we construct a set $\widetilde{R}_\alpha$ so
that $\widetilde{R}_\alpha=^*\overline{R}_\alpha$ if $\alpha\prec f$.
Once an element enters any $R_\alpha$, $D_\alpha$, or
$\widetilde{R}_\alpha$, it remains there.  So, these are all c.e.\
sets.  Moreover, no element enters any of these sets before the
element has been placed on the tree.

We recast the requirements $\mathcal{S}_e$ and $\mathcal{R}_e^\prime$
in this tree language.  For $\alpha\in 2^{<\omega}$ with
$|\alpha| = e$, we have the requirements:
\begin{equation*}
  \tag*{$\mathcal{S}_{\alpha}$:}
  \overline{A} \neq W_e,
\end{equation*}

\begin{equation*}
  \tag*{$\mathcal{R}^\prime_{\alpha}$:}
  W_e \subseteq^*  \bigcup_{\beta \preccurlyeq \alpha} D_\beta  \cup
  \bigsqcup_{\beta \preccurlyeq \alpha} R_\beta
  \text{ or } {W_e} \cup \bigcup_{\beta \preccurlyeq \alpha} D_\beta \cup
  \bigsqcup_{\beta \preccurlyeq \alpha} R_\beta  =^* \omega.
\end{equation*}
We will address the requirements $\mathcal{L}_e$ and $\mathcal{I}_e$
after we describe how to meet the above requirements. First, we
describe the general rules about how balls move down the tree.  The
outcomes and action for $\mathcal{R}_\alpha^\prime$ also control this
movement (and maintain our construction guarantees), but we delay
these details until \S\ref{ActionofR'}.
Given $\beta\in 2^{<\omega}$, we let $\beta^-$ denote the node
immediately preceding $\beta$.

The position function $\alpha(x, s)$ is the location of an element $x$
on the tree $2^{<\omega}$ at stage $s$.  Elements on the tree either
move downward from the root $\lambda$ by gravity or are pulled
leftward by action for requirement $\mathcal{R}_\alpha^\prime$.
Meanwhile, the requirement $\mathcal{S}_\alpha$ restrains movement
down the tree while it secures a witness denoted
$x_\alpha$. 
We say that $x$ is {\em $\beta$-allowed at stage $s$}
if 
$x > |\beta|$, 
$x$ is not in
$\bigsqcup_{\gamma \preccurlyeq\beta} R_\gamma\cup \bigcup_{\gamma
  \preccurlyeq \beta} D_\gamma$
and $x$ has been enumerated into $\widetilde{R}_\gamma$ for all
$\gamma\preccurlyeq \beta$. By induction on $\beta \prec f$, almost
all balls not in
$\bigsqcup_{\gamma \preccurlyeq\beta} R_\gamma\cup \bigcup_{\gamma
  \preccurlyeq \beta} D_\gamma$ are $\beta$-allowed.

Given $f_s$, we determine the position function $\alpha(x,s)$ by the
following rules (defined stagewise).  
 At stage $s$, the ball $s$ enters the tree and is
placed on node $\lambda$, i.e., we set $\alpha(s, s)=\lambda$, and we
enumerate $s$ into $\widetilde{R}_\lambda$.  Hence, $s$ is
$\lambda$-allowed.
The node $\beta$ may pull any $x$ for $\mathcal{R}_\beta^\prime$ at
stage $s$ if $\beta \le _L \alpha(x,s-1)$, $x$ is
$\alpha(x,s-1) \cap f_s$-allowed, and, for all stages $t$, if
$x \leq t \leq s$, then $\beta \leq_L
f_t$. 
 In this case, move $x$ to $\beta$, i.e., set 
 $\alpha(x,s) = \beta$, and enumerate $x$ into $\widetilde{R}_\gamma$ for all $\gamma$ such that $ \alpha(x,s-1) \cap f_s\prec\gamma\prec \beta$.  
For details on  when a ball  is pulled and what action is taken with pulled balls, see Remark~\ref{sec:acti-mathc}.

On the other hand, suppose that $x$ is $\beta^-$-allowed for some $\beta\preccurlyeq f_s$, $x$ is not
the current witness $x_{\beta^-}$ for $\mathcal{S}_{\beta^{-}}$, and,
for all stages $t$, if $x \leq t \leq s$, then $\beta \leq_L f_t$.  In
this case, move $x$ to $\beta$ at stage $s$ so that
$\alpha(x,s) = \beta$.  If an element $x$ on the tree is not moved by
these rules 
 and $\alpha(x, s-1)$ is
not reset at stage $s$, set $\alpha(x, s)=\alpha(x, s-1)$.  If
$\alpha(x, s-1)$ is reset at stage $s$, let
$\alpha(x, s)=\alpha(x,s-1) \cap f_s$.

Note that, throughout the construction, we only move $x$ to some node
$\beta$ at stage $s$ (i.e., set $\alpha(x, s)=\beta$) if   $\beta\preccurlyeq f_s$ or
$\beta$ pulled $x$ (in which case, there was an earlier stage $t$ such
that $\beta \preccurlyeq f_t$, and $\beta$ has not been reset since stage $t$) and, after $x$ is moved to $\beta$ at stage $s$, the ball $x$ is
(at least) $\beta^-$-allowed.
In addition, by the action for $\mathcal{R}'_{\alpha}$ described in
\S\ref{ActionofR'}, we will ensure the following if $\alpha\prec f$.
First, infinitely many balls will reach $\alpha$ and be
$\alpha$-allowed.  Second, for each ball that is $\alpha$-allowed at
node $\alpha$, we add another ball to $R_\alpha$. Third, all but
finitely many balls are enumerated into $R_\alpha$ or
$\widetilde{R}_\alpha=^*\overline{R}_\alpha$ and each of these sets is
infinite.  We now describe the details of each requirement's action.

\subsubsection{Action for $\mathcal{S}_{\beta}$} \

\noindent
{\em Assigning witnesses to $\mathcal{S}_{\beta}$.}\quad
We meet $\mathcal{S}_{\beta}$ in the usual way.  For any
$\beta\in 2^{<\omega}$, we let $x_{\beta, s}$ denote the stage $s$
witness for $\mathcal{S}_{\beta}$.  The witness $x_{\beta, 0}$ is
undefined.  Suppose that
$W_{|\beta|, s}\cap D_{\lambda, s}=\emptyset$, witness $x_{\beta, s}$
is undefined, and there is a stage $t>s$ and an element
$x\ge 2|\beta|$ such that $\beta\preccurlyeq f_t$ and
$\alpha(x, t)=\beta$.  At the least such stage $t>s$, define
$x_{\beta, t}$ to be the least $x$ such that $\alpha(x,
t)=\beta$. 
Once $x_{\beta, t}$ is defined, we let $x_{\beta, t'}=x_{\beta, t}$
unless $f_{t'} <_L \beta$ for $t'>t$.  In this case, we make
$x_{\beta,t'}$ undefined at that stage.
The node $\beta$ may not take any action while $x_{\beta,s}$ is
undefined and $W_{|\beta|, s}\cap D_{\lambda, s}=\emptyset$.

\noindent
\emph{Placing witnesses into $D_\lambda$.}\quad Suppose
$\alpha \preccurlyeq f_s$, $|\alpha| = e$,
$W_{e,s} \cap D_{\lambda,s} = \emptyset$, and there is an
$x_{\beta,s}\ge 2e$ such that $|\beta| = |\alpha| = e$ and
$x_{\beta,s} \in W_{e, s}$.  Then, enumerate $x_{\beta,s}$ into
$D_\lambda$ and $\widetilde{R}_\gamma$ for all $\gamma\in 2^{<\omega}$
and remove $x_{\beta,s}$ from the tree. This is the only way balls
enter $D_\lambda$.

Suppose that $\alpha\prec f$ and $\overline{D}_{\lambda} = W_e$ where
$|\alpha|=e$. By the assumption that infinitely many balls will reach
$\alpha$, it is straightforward to show that some witness
$x_{\beta, s}\in W_e$ for $|\beta|=|\alpha|=e$ is enumerated into
$D_\lambda$ to meet $\mathcal{S}_{\alpha}$, a contradiction.  As
usual, $\mathcal{S}_{\alpha}$ acts at most once (and at most one
$\mathcal{S}_{\beta}$ acts for a given $e=|\beta|$) and $D_\lambda$ is
coinfinite since any witness for $\beta\in 2^e$ satisfies
$x_{\beta,s}\ge 2|\beta|=2e$.  Note that a ball might enter $W_e$ long
before it becomes our witness.  So this action does not imply that $A$ is
promptly simple.

\begin{remark}\label{degree}
  Our action for $\mathcal{S}_\beta$ mixes with both finite permitting
  and coding. 
  For permitting, we ask for permission when we want to place a ball
  into $D_\lambda$. If we get permission, then we add the ball to
  $D_\lambda$. While waiting for permission, we set up a new ball as
  another witness $x_{\beta}$. If enumerating that ball into
  $D_\lambda$ would also satisfy $\mathcal{S}_\beta$, we ask again for
  permission. Under finite permitting, we will eventually receive
  permission to enumerate some witness for $\mathcal{S}_\beta$ into
  $D_\lambda$. Hence, we can construct $D_\lambda$ to be incomplete or
   computable in any noncomputable c.e.\ set.

  Fix a c.e.\ set $W$ such as $K$.  To code $W$ into $D_\lambda$, when
  $W$ changes below $e$ at stage $s$, dump all currently defined
  witnesses $x_{\beta,s}$ for $|\beta| \geq e$, into $D_\lambda$.
  To determine $W$ below $e$, wait until there is a witness
  $x_{\alpha,s}$ not in $D_\lambda$ for $|\alpha| \ge e $. (Since the
  empty set has infinitely many indices and $\lim_s x_{\alpha,s}$
exists for all $\alpha \prec f$, we will always find such a
$x_{\alpha,s}$.)  Then, $W$ below $e$ will not change after stage
$s$. So, we can construct $D_\lambda$ to be complete.

These remarks also apply to the construction of a set with an
$A$-special list in \citet[Section 7.2]{MR2366962}. \end{remark}

\subsubsection{Action for
  $\mathcal{R}^\prime_{\alpha}$}\label{ActionofR'}

To meet $\mathcal{R}^\prime_{\alpha}$, we need to know whether a
certain c.e.\ set is infinite. For $e =|\alpha|$, we define the set
\begin{equation*}
  \tilde{W}_{e} = \{ x \mid (\exists s)[ x \text{ is $\alpha^-$-allowed
    at or before stage }s\ \&\ x \in W_{e,s}]
  \}.
\end{equation*}
The action for $\mathcal{R}^\prime_{\alpha}$ depends on whether the
c.e.\ set
\begin{equation*}
  X_{\alpha^-} =
  \tilde{W}_e \diagdown
  (\bigsqcup_{\beta \prec \alpha} R_\beta
  \cup \bigcup_{\beta \prec \alpha} D_\beta)
\end{equation*}
is infinite. Notice that $\tilde{W}_e$ and $X_{\alpha^-}$ depend only
on nodes that are proper subnodes of $\alpha$.  By definition,
a ball that is $\alpha^-$-allowed at stage $s$ is   not in
$\bigsqcup_{\beta \prec \alpha} R_\beta \cup \bigcup_{\beta \prec
  \alpha} D_\beta$ at stage $s$.
Recall our promise that $\alpha^-\prec f$ implies that infinitely many
balls will be $\alpha^-$-allowed at some point.  Hence, $X_{\alpha^-}$ is infinite
if and only if infinitely many $\alpha^-$-allowed balls enter $W_e$
before they enter
$\bigsqcup_{\beta \prec \alpha} R_\beta \cup \bigcup_{\beta \prec
  \alpha} D_\beta$.

Each $\alpha$ in the tree encodes a guess as to whether $X_{\alpha^-}$
is infinite.  In particular, $\alpha(|\alpha|-1)=0$ indicates the
guess that $X_{\alpha^-}$ is infinite.  The statement that the c.e.\
set $X_{\alpha^-}$ is infinite is $\Pi_2^0$, so this information can
be coded into a tree in the standard way.  Specifically, we can define
the true path $f$ and the stage $s$ approximation to the true path
$f_s$ so that $\alpha$ encodes a correct guess if
$\alpha \preccurlyeq f$.  Since these definitions are standard, we
leave them to the reader. Similar constructions with all the details
can be found in \cite{mr95f:03064} and \cite{Weber:06}.

We define a helper set $P_\alpha$ based on the guess encoded by
$\alpha$.  If $\alpha$ encodes the guess that $X_{\alpha^-}$ is
infinite, we let $P_\alpha= X_{\alpha^-}$. Otherwise, we let
$P_\alpha = \omega \diagdown (\bigsqcup_{\beta \prec \alpha} R_\beta
\cup \bigcup_{\beta \prec \alpha} D_\beta)$.
If $X_{\alpha^-}$ is in fact finite, then $W_e$ is almost contained in
$ \bigsqcup_{\beta \prec \alpha} R_\beta \cup \bigcup_{\beta \prec
  \alpha} D_\beta$,
and $\mathcal{R}^\prime_\alpha$ is met.  We describe the action for
$\mathcal{R}^\prime_\alpha$ and show that $\mathcal{R}^\prime_\alpha$
is also met if $X_{\alpha^-}$ is infinite and $\alpha\prec f$.

\begin{remark}[Pulling]\label{sec:acti-mathc}
  If $\alpha\preccurlyeq f_s$ and $x_{\alpha, s}$ is defined at stage
  $s$, then $\alpha$ pulls, possibly at later stages, the least
  available balls that are greater than $|\alpha|$
  and in $P_\alpha$ for $\mathcal{R}^\prime_\alpha$ until it has
  secured two such balls $x$ and $y$. After such a time, $\alpha$
  cannot pull again until $\alpha$ is once more on the approximation of the true
  path.  Any ball may be pulled at most once by a given node
  $\alpha$.
\end{remark}

If $\mathcal{R}^\prime_\alpha$ has secured two balls
$x, y\in P_\alpha$ with
$\alpha(x,s)=\alpha(y,s)=\alpha\preccurlyeq f_s$, we enumerate $x$
into $\widetilde{R}_\alpha$, so that $x$ is $\alpha$-allowed at stage
$s$,
and enumerate $y$ into $R_{\alpha,s}$.  If there are any other balls
$z$ such that $\alpha(z,s)=\alpha$, we enumerate these balls into
$R_{\alpha, s}$.  Some of these balls might be in some $D_\beta$ where
$\beta \prec \alpha$.
For any $\beta$, if a ball is added to $R_\beta$, then also add it to
$\widetilde{R}_\gamma$ for all $\gamma$ extending $\beta$. By
construction, if $\alpha\prec f$, the only balls not in $R_\alpha$ or
$\widetilde{R}_\alpha$ are the balls $x$ such that
$\alpha(x,s)<_L \alpha$ or $x$ is one of finitely many unused
potential witnesses for $\mathcal{S}_\beta$ with
$|\beta|\le |\alpha|$.  Hence, $R_\alpha$ is computable.

Suppose that $\alpha\prec f$.  Since $P_\alpha$ is infinite and all
but finitely many balls pass through $\alpha$, there are infinitely
many stages $s$ such that $\alpha\prec f_s$ and the node $\alpha$
holds two balls in $P_{\alpha^-}$ for $\mathcal{R}^\prime_{\alpha}$.
Hence, infinitely many balls will reach $\alpha$ and be
$\alpha$-allowed.  Moreover, both $R_\alpha$ and
$\widetilde{R}_\alpha$ will be infinite.  By construction,
$ \bigsqcup_{\beta \preccurlyeq \alpha} R_\beta \cup \bigcup_{\beta
  \prec \alpha} D_\beta \cup P_\alpha =^* \omega$.
So, if $X_{\alpha^-}$ is infinite,
$ \bigsqcup_{\beta \preccurlyeq \alpha} R_\beta \cup \bigcup_{\beta
  \prec \alpha} D_\beta \cup W_e =^* \omega$.
Therefore, $\mathcal{R}^\prime_\alpha$ is met.

\subsubsection{Meeting the other requirements}

We divide $R_\alpha$ into two parts:
the balls that enter $R_\alpha$ before being placed in any $D_\beta$
for $\beta\prec\alpha$, specifically $R^+_\alpha=\bigcup_{\beta\prec\alpha} (R_\alpha\diagdown D_\beta)$, and the remaining balls
$R^-_\alpha=R_\alpha-R^+_\alpha$. Clearly,
$R^-_\alpha \subseteq \bigsqcup_{\beta \prec \alpha} D_{\beta}$.
Since the infinitely many pairs of balls pulled for
$\mathcal{R}^\prime_\alpha$ are not in $D_\beta$ for any
$\beta \prec \alpha$, $R^+_\alpha$ is infinite if $\alpha\prec f$.

Recall Lachlan's construction (Theorem \ref{T:Lachlan}) that for
$\mathcal{B}_e$ there is a hhsimple set of flavor
$\mathcal{B}_e$. Apply this construction to $R^+_\alpha$ to get $H_e$
and meet requirement $\mathcal{L}_{e}$.  For the Type 9 case, use
Lachlan's small major subset construction (Theorem
\ref{sec:smallmajorsubset}) to satisfy $\mathcal{I}_e$ and the
construction assumptions in \S\ref{Dmaxreq89}, i.e., build $D_e$ so
that $D_e\cap R_j=H_j$ for $j\le e$ and $D_e$ is small major in
$D_{e+1}$ on $\overline{\bigsqcup_{j\le e} R_j}$.  (For the Type 7
case, add all balls in $H_e$ into $D_{\lambda^+}$. For the Type 8
case, construct a Friedberg splitting $\bigsqcup_{j\leq e} H_{e,j}$ of
$H_e$ and add the balls in $H_{e,j}$ into $D_j$.)  This ends the
construction.\hfill $\square$

\bibliographystyle{plainnat}
\bibliography{dmax} \end{document}